\newcommand*{\textlabel}[2]{%
  \edef\@currentlabel{#1}% Set target label
  \phantomsection% Correct hyper reference link
  #1\label{#2}% Print and store label
}
\newtheoremstyle{custom}% name
  {3pt}%      Space above
  {3pt}%      Space below
  {\slshape}%         Body font
  {}%         Indent amount (empty = no indent, \parindent = para indent)
  {\bfseries}% Thm head font
  {.}%        Punctuation after thm head
  { }%     Space after thm head: " " = normal interword space;
   {}%         Thm head spec (can be left empty, meaning `normal')
\theoremstyle{custom}
\newtheorem{theorem}{Theorem}[section]
\newtheorem{proposition}[theorem]{Proposition}
\newtheorem{proposition/definition}[theorem]{Proposition/Definition}
\newtheorem{lemma}[theorem]{Lemma}
\newtheorem{corollary}[theorem]{Corollary}
\newtheorem{conjecture}[theorem]{Conjecture}
\theoremstyle{definition}
\newtheorem{definition}[theorem]{Definition}
\newtheorem{example}[theorem]{Example}
\theoremstyle{remark}
\newtheorem{remark}[theorem]{Remark}
\newtheoremstyle{exercise}% name
  {3pt}%      Space above
  {6pt}%      Space below
  {}%         Body font
  {}%         Indent amount (empty = no indent, \parindent = para indent)
  {\bfseries}% Thm head font
  {:}%        Punctuation after thm head
  { }%     Space after thm head: " " = normal interword space;
   {}%         Thm head spec (can be left empty, meaning `normal')
\theoremstyle{exercise}
\newtheorem{exercise}[theorem]{Exercise}
\newtheoremstyle{exercises}% name
  {3pt}%      Space above
  {6pt}%      Space below
  {}%         Body font
  {}%         Indent amount (empty = no indent, \parindent = para indent)
  {\bfseries}% Thm head font
  {:}%        Punctuation after thm head
  {\newline}%     Space after thm head: " " = normal interword space;
   {}%         Thm head spec (can be left empty, meaning `normal')
\theoremstyle{exercise}
\newtheorem{exercises}[theorem]{Exercises}
\def\boxit#1{\vbox{\hrule height1pt\hbox{\vrule width1pt\kern3pt
  \vbox{\kern3pt#1\kern3pt}\kern3pt\vrule width1pt}\hrule height1pt}}
\def\BC{\mathbb C}
\def\11{\mathbf 1}
\def\o{\omega}
\def\s{\sigma}
\def\dim{{\rm dim}\;}
\def\s{\sigma}
\def\BC{\mathbb  C}
\def\tperm{\operatorname{perm}}
\def\be{\begin{equation}}
\def\ene{\end{equation}}
\DeclareMathOperator{\tlog}{log}
\def\dim{{\rm dim }}
\def\Ker{{\rm Ker }}
\DeclareMathOperator*{\perm}{perm}
\def\dim{{\rm dim }}
\begin{document}

\title{Equations for secant varieties of Chow varieties}
\author{Yonghui Guan}
 \begin{abstract}
The Chow variety of polynomials that decompose as a product of linear forms has been studied for more than 100 years.
Finding equations in the ideal of  secant varieties of Chow varieties would enable one to measure the complexity
the permanent  to prove Valiant's conjecture $\mathbf{VP\neq VNP}$. In this article, I use the method of prolongation to obtain equations for secant varieties of Chow varieties as $GL(V)$-modules.
 \end{abstract}
 \email{yonghuig@math.tamu.edu }
\keywords{Chow Variety, secant variety of Chow variety, prolongation, $GL(V)$-module}
\maketitle
%\raisebox{1cm}
\section{Introduction}
\subsection{Motivation from algebraic geometry}
There has been substantial recent interest in the equations of certain algebraic varieties that encode
natural properties of polynomials (see e.g. \cite{MR2310544,LMsec,MR3169697,MR3081636,LWsecseg}). Such varieties are usually preserved by algebraic groups and it is a natural question to understand the module structures of the spaces of equations. One variety of interest
is the {\it Chow variety} of polynomials that decompose as a product of linear forms, which is defined by
$Ch_d(V)=\mathbb{P}\{z\in S^dV|z=w_1\cdots w_d\,{\rm\ for\ some\ } w_i \in V\}\subset\mathbb{P}S^dV,$
where $V$ be a finite-dimensional complex vector space and  $\mathbb{P}S^dV$ is the projective space of homogeneous polynomials of degree $d$ on the dual space $V^*$.

The ideal of the Chow variety of polynomials that decompose as a product of linear forms has been studied for over 100 years, dating back at least to  Gordon and Hadamard. Let $S^\delta(S^dV)$ denote the space of homogeneous polynomials of degree $\delta$ on $S^dV^*$. The {\it Foulkes-Howe} map $h_{\delta,d}:S^\delta(S^dV)\rightarrow S^d(S^\delta V)$ (see \S\ref{fhchow} for the definition) was defined by Hermite \cite{hermite} when $\dim\ V=2$, and Hermite proved the map is an isomorphism in his celebrated \lq\lq Hermite reciprocity\rq\rq. Hadamard \cite{MR1554881} defined the map in
general and observed that its kernel is $I_\delta(Ch_d(V^*))$, the degree $\delta$ component of the ideal
of the Chow variety. The conjecture that $h_{\delta,d}$ is always of maximal rank dating back to Hadamard \cite{MR1504330}  has become known as the
\lq\lq Foulkes-Howe conjecture\rq\rq \cite{MR0037276,MR983608}.
%J.M$\ddot{\rm u}$ler and M.Neunh$\ddot{\rm o}$fer)\cite{MR2172706}) proved $h_{4,4}$ is an isomorphism.
M\"uller and Neunh\"offer \cite{MR2172706} proved the conjecture is false by showing the map $h_{5,5}$ is not injective.  Brion \cite{MR1243152,MR1601139} proved the Foulkes-Howe conjecture is true asymptotically, giving an explicit, but very large
bound for $\delta$ in terms of $d$ and $\dim\ V$. We do not understand this map when $d>4$ (see \cite{MR1243152,MR1601139,MR0037276,MR1504330,MR983608,MR1651092}).

Brill and Gordon (see \cite{gkz,Gordon,MR2865915}) wrote down set-theoretic equations
for the Chow variety of degree $d+1$,  called \lq\lq Brill's equations\rq\rq. Brill's equations give a geometric derivation of set-theoretic equations for the Chow variety, I computed Brill's equations in terms of a $GL(V)$-module from a representation-theoretic perspective \cite{2015arXiv150802293G}, where $GL(V)$  denotes the {\it general linear group} of invertible linear maps from $V$ to $V$.

Let $W$ be a complex vector space and $X\subset \mathbb{P}{W^*}$ be an algebraic variety, define
$\sigma^0_r(X)={\bigcup_{p_1,\cdots,p_r\in X}\langle p_1,\cdots,p_r\rangle} \subset \mathbb{P}W^*,$
where $\langle p_1,\cdots,p_r\rangle$ denotes the  projective plane spanned by $p_1,\cdots,p_r$.
Define the $r$-th {\it secant variety} of $X$ to be
$\sigma_r(X)=\overline{\sigma^0_r(X)} \subset \mathbb{P}W^*,$ where the overline denotes closure in the Zariski topology.

Secant varieties of Chow varieties are invariant under the action of the group $GL(V)$, therefore their ideals are
 $GL(V)$-modules (see \S\ref{Gvariety}). Previously very little was known about
the ideals of secant varieties of Chow varieties, I obtained determinantal equations for these varieties in \cite{2015arXiv151000886G}.
In this article, I obtain equations for secant varieties of Chow varieties in terms of $GL(V)$-modules based on what we know about the ideal of
 Chow varieties.

\subsection{Motivation from complexity theory}

Leslie Valiant \cite{vali:79-3} defined in 1979 an algebraic analogue of the famous ${\mathbf{P}}$ versus ${\mathbf{NP}}$ problem (see Appendix  in \S\ref{appendix}).
%(see Appendix \ref{appendix} for the definitions of circuits,  classes ${\mathbf{VP}}$ and ${\mathbf{VNP}}$).
The class ${\mathbf{VP}}$ is an algebraic analog of the class ${\mathbf{P}}$, and the class ${\mathbf{VNP}}$ is an algebraic analog of the class ${\mathbf{VP}}$.  Valiant's Conjecture $\mathbf{VP\neq VNP}$ \cite{vali:79-3} may be rephrases as  \lq\lq there does not exist polynomial size circuit that computes the permanent\rq\rq, defined by ${\perm}_n=\sum_{\sigma\in\mathfrak{S_n}}x_{1\sigma(1)}x_{2\sigma(2)}\cdots x_{n\sigma(n)}\in S^n{\mathbb{C}^{n^2}}$, where $\mathfrak{S_n}$ is the symmetric group and $\mathbb{C}^{n^2}$ has a basis $\{x_{ij}\}_{1\leq i,j\leq n}$.
The readers can refer to Appendix  in \S\ref{appendix} to learn more about  circuits, complexity classes and Valiant's Conjecture.

Let $h_n$ and $g_n$ be two positive sequences, define $h_n=\omega(g_n)$ if $\lim_{n\rightarrow\infty}\frac{h_n}{g_n}=\infty$.

A geometric method to approach Valiant's conjecture implicitly proposed by Gupta, Kamath, Kayal and Saptharishicite \cite{DBLP:journals/eccc/GuptaKKS13} is to determine equations for certain secant varieties.
The following theorem appeared in \cite{MR3343444}, it is a geometric rephrasing of results in \cite{DBLP:journals/eccc/GuptaKKS13}.
\begin{theorem}\cite{DBLP:journals/eccc/GuptaKKS13,MR3343444}\label{chowvnp}  If for all but a finite number of $m$, for all $r,n$ with $rn<2^{\sqrt{m}\tlog(m) \o(1)}$,
$$[\ell^{n-m}\tperm_m]\not\in \s_r(Ch_{n}(\BC^{m^2+1})),$$

then Valiant's Conjecture $\mathbf{VP\neq VNP}$ \cite{vali:79-3} holds.
\end{theorem}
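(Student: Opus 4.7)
I will prove the contrapositive: assume $\mathbf{VP}=\mathbf{VNP}$ and show that for all sufficiently large $m$ there exist $r$ and $n$ with $rn<2^{\sqrt{m}\log(m)\omega(1)}$ satisfying $[\ell^{n-m}\operatorname{perm}_m]\in\sigma_r(Ch_n(\mathbb{C}^{m^2+1}))$, which contradicts the hypothesis. The engine is a geometric translation of the Gupta--Kamath--Kayal--Saptharishi ``chasm at depth three''.

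If $\mathbf{VP}=\mathbf{VNP}$, then $\operatorname{perm}_m\in\mathbf{VP}$, so some arithmetic circuit of size $s=\operatorname{poly}(m)$ computes $\operatorname{perm}_m$. The depth-three chasm of \cite{DBLP:journals/eccc/GuptaKKS13} converts this into a $\Sigma\Pi\Sigma$ circuit of size at most $(ms)^{O(\sqrt{m})}=2^{c\sqrt{m}\log m}$ for some absolute constant $c$. Concretely, one obtains
\[
\operatorname{perm}_m(x)=\sum_{i=1}^{r}\prod_{j=1}^{d_i}\ell_{ij}(x),
\]
with affine linear forms $\ell_{ij}$ on $\mathbb{C}^{m^2}$ and both $r$ and $\max_i d_i$ bounded by $2^{c\sqrt{m}\log m}$; since $\operatorname{perm}_m$ has degree $m$, necessarily $\max_i d_i\ge m$.

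I now homogenize by padding. Adjoin a new coordinate $\ell$, enlarging the ambient space to $\mathbb{C}^{m^2+1}$, and replace each affine form $\ell_{ij}(x)=a_{ij}+L_{ij}(x)$ by the genuine linear form $\hat\ell_{ij}(x,\ell)=a_{ij}\ell+L_{ij}(x)$. Set $n=\max_i d_i$ and append $n-d_i$ factors equal to $\ell$ to the $i$-th product, so that every product has exactly $n$ linear factors. The resulting expression is the standard degree-$n$ homogenization of the displayed circuit identity; because $\operatorname{perm}_m$ is already homogeneous of degree $m$, this homogenization evaluates to
\[
\ell^{n-m}\operatorname{perm}_m(x)=\sum_{i=1}^{r}\prod_{j=1}^{n}\hat\ell_{ij}(x,\ell),
\]
exhibiting $[\ell^{n-m}\operatorname{perm}_m]\in\sigma_r(Ch_n(\mathbb{C}^{m^2+1}))$. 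Then $rn\le 2^{2c\sqrt{m}\log m}=2^{O(\sqrt{m}\log m)}$, and because $\sqrt{m}\log(m)\cdot\omega(1)$ eventually exceeds any constant multiple of $\sqrt{m}\log m$, we conclude $rn<2^{\sqrt{m}\log(m)\omega(1)}$ for all sufficiently large $m$, contradicting the hypothesis.

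The nontrivial content is the depth-three chasm, imported wholesale from \cite{DBLP:journals/eccc/GuptaKKS13}; everything else amounts to geometric bookkeeping. The single delicate point is the homogenization in the third paragraph: one must confirm that after padding with $\ell$'s only the degree-$m$ piece of the right-hand side remains relevant, so the left-hand side is exactly $\ell^{n-m}\operatorname{perm}_m$ rather than an inhomogeneous mixture. This works precisely because $\operatorname{perm}_m$ itself is homogeneous, so projective-closure subtleties that might otherwise arise in defining $\sigma_r(Ch_n)$ do not obstruct the inclusion.
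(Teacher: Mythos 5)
The paper does not prove this theorem; it is stated by citation to \cite{DBLP:journals/eccc/GuptaKKS13,MR3343444}, so there is no internal proof to compare against. Your proposal is a correct reconstruction of the argument underlying those references: taking the contrapositive, invoking the GKKS depth-three chasm to produce a $\Sigma\Pi\Sigma$ expression for $\operatorname{perm}_m$ of quasi-exponential size, homogenizing with a padding variable $\ell$ so that each summand becomes a product of exactly $n$ linear forms on $\mathbb{C}^{m^2+1}$, and observing that the resulting point lies in $\sigma_r^0(Ch_n(\mathbb{C}^{m^2+1}))\subset\sigma_r(Ch_n(\mathbb{C}^{m^2+1}))$. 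Your remark that $\max_i d_i\geq m$ (because a sum of products of degree $<m$ cannot equal the degree-$m$ permanent) is the right justification for the padding to make sense, and the observation that the degree-$n$ homogenization operator $f\mapsto\ell^n f(x/\ell)$ is linear is exactly the point that resolves the ``inhomogeneous mixture'' worry you flag at the end.

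One small inaccuracy worth noting: your quoted depth-three bound $(ms)^{O(\sqrt{m})}=2^{O(\sqrt{m}\log m)}$ is slightly more optimistic than what GKKS actually prove, which in general carries extra polylogarithmic factors (of the shape $2^{O(\sqrt{d\log d\log N\log s})}$, giving roughly $2^{O(\sqrt{m}\log^{3/2}m)}$ for $\operatorname{perm}_m$). This does not affect the conclusion, since the $\omega(1)$ in the hypothesis is present precisely to absorb any fixed polylogarithmic correction; but a careful write-up should either cite the chasm with its actual exponent or explicitly note that the $\omega(1)$ handles the slack. The concluding inequality ``$rn<2^{\sqrt{m}\log(m)\omega(1)}$'' also abuses the $\omega(1)$ notation somewhat (it is a quantifier over functions, not a specific function), though the intended meaning --- that for any fixed $g(m)\to\infty$ the inequality $rn<2^{\sqrt{m}\log(m)g(m)}$ holds for all large $m$ --- is clear and correct.
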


Theorem \ref{chowvnp}  motivated me to study the varieties $\s_r(Ch_d(V))$.
 Although the equations I obtain here cannot  separate $\mathbf{VP}$ from $\mathbf{VNP}$, the results come from a geometric perspective, and these are  the first low degree equations for secant varieties of Chow varieties, in addition to the non-classical equations
 obtained in \cite{2015arXiv151000886G}.

My results include
\begin{itemize}
\item Equations for $\sigma_2(Ch_3(C^{6*}))$ (Theorems \ref{secant2chow37} and \ref{secant2chow38}).
\item Equations for $\s_r(Ch_4(\mathbb{C}^{4r*}))$ (Theorem \ref{secantrchow4}).
\item Properties related to plethysm coefficients (Theorems  \ref{Lowweight} and \ref{plethysmeven}).
\item  Equations for $\s_r(Ch_d(\mathbb{C}^{dr*}))$ when $d$ is even (Theorem \ref{seccanteven})
\end{itemize}

\subsection{Results }
Let $X\subset W^{*}$ be an algebraic variety. Suppose we know the ideal of $X$, there is a systematic
method called prolongation (see \S\ref{prolongation1} for definition) to compute the ideal of $\sigma_r(X)$, but this method is difficult to implement. This method was studied by J. Sidman and S. Sullivant \cite{MR2541390},  and J.M. Landsberg and L. Manivel \cite{LMsec}.

For any partition $\lambda$, let $S_\lambda V$ be the irreducible $GL(V)$-module determined by the partition $\lambda$, for example $S_{(d)}V = S^dV$, while $S_{(1^d)}V = \Lambda^dV$ is
the $d$-th exterior power of $V$. The group $GL(V)$ has an induced action on $S^k(S^dV)$  (see \S\ref{Gvariety}), so $S^k(S^dV)$ a $GL(V)$-module, and $S^k(S^dV)$ can be decomposed  into a direct sum of irreducible $GL(V)$-modules, the multiplicity of $S_\lambda V$ in $S^k(S^dV)$ is the plethysm coeffcient  $p_\lambda(k, d)$.
To obtain equations for secant varieties, on one hand I compute prolongations directly via differential operators and representation theory. On the other hand, I rephrase prolongations and reduce computing prolongations to computing  polarization maps (see \S\ref{prolongation1}) via plethysm coefficients and Littlewood-Richardson coefficients (see \S \ref{lRr}). This gives a path towards obtaining equations for secant varieties of Chow varieties and other varieties.

Let $I_d(X)$ denote the degree $d$ component of the ideal of $X$.
For $d=3$,
\begin{theorem}\label{secant2chow37}
Let $\dim\ V\leq6$,
$I_7(\sigma_2(Ch_3(V^*)))=0.$
\end{theorem}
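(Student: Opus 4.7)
The plan is to use the method of prolongation. An element $f\in S^7(S^3V)$ lies in $I_7(\sigma_2(Ch_3(V^*)))$ iff for every $k\in\{0,1,\ldots,7\}$ the bi-polarization $\pi_{k,7-k}(f)\in S^k(S^3V)\otimes S^{7-k}(S^3V)$ vanishes on $Ch_3(V^*)\times Ch_3(V^*)$, equivalently lies in $I_k(Ch_3(V^*))\otimes S^{7-k}(S^3V)+S^k(S^3V)\otimes I_{7-k}(Ch_3(V^*))$. The symmetry $\pi_{k,7-k}(x_1,x_2)=\pi_{7-k,k}(x_2,x_1)$ pairs the cases $k$ and $7-k$, so only $k\in\{0,1,2,3\}$ give independent conditions.

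The first step would be the key lemma $I_k(Ch_3(V^*))=0$ for $k\leq 3$ (for any $\dim V$). Since $I_\delta(Ch_3(V^*))=\ker h_{\delta,3}$ for the Foulkes--Howe map $h_{\delta,3}:S^\delta(S^3V)\to S^3(S^\delta V)$, this is a low-degree injectivity statement. For $\delta=1$, $h_{1,3}$ is the identity. For $\delta=2$, the plethysms $s_2[s_3]=s_{(6)}+s_{(4,2)}$ and $s_3[s_2]=s_{(6)}+s_{(4,2)}+s_{(2,2,2)}$ are both multiplicity-free, so Schur's lemma reduces the task to checking that $h_{2,3}$ acts nontrivially on each of the two shared isotypic components. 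For $\delta=3$, $s_3[s_3]=s_{(9)}+s_{(7,2)}+s_{(6,3)}+s_{(5,2,2)}+s_{(4,4,1)}$ is multiplicity-free, and $h_{3,3}$ is an equivariant self-map which Schur's lemma forces to be a scalar on each isotypic; the Foulkes--Howe conjecture is known in these ranges. Thus the constraints on $f$ collapse to: $f\in I_7(Ch_3(V^*))$; the first polarization of $f$ lies in $S^3V\otimes I_6(Ch_3(V^*))$; the second polarization in $S^2(S^3V)\otimes I_5(Ch_3(V^*))$; and the third polarization in $S^3(S^3V)\otimes I_4(Ch_3(V^*))$.

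To conclude, I would decompose $S^7(S^3V)$ as a $GL(V)$-module via the plethysm $s_7[s_3]$ and, for each irreducible $S_\lambda V$ with $\ell(\lambda)\leq 6$, compute its image under each of the four polarization maps using Littlewood--Richardson rules, comparing to the explicit $GL(V)$-module descriptions of $I_k(Ch_3(V^*))$ for $k=4,5,6,7$ built on Brill's equations and the author's earlier work \cite{2015arXiv150802293G,2015arXiv151000886G}. The goal is to show that no $S_\lambda V$ can survive all four constraints simultaneously, forcing $f=0$. The main obstacle is precisely this final bookkeeping: it requires explicit $GL(V)$-module descriptions of $I_k(Ch_3(V^*))$ in degrees $4,5,6,7$, and a systematic case analysis over the irreducible constituents of $s_7[s_3]$. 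The hypothesis $\dim V\leq 6$ is essential here, as it truncates the candidate list to irreducibles $S_\lambda V$ with $\ell(\lambda)\leq 6$, keeping the case analysis finite and tractable.
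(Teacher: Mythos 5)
Your multiprolongation framework is correct in principle, but you have missed the key simplification that the paper uses and that makes the computation tractable: Sidman--Sullivant's theorem (Theorem~\ref{sidman} in the paper). Since $I_3(Ch_3(V^*))=0$ and $I_4(Ch_3(V^*))\neq 0$, that theorem gives the clean identity
\[
I_7(\sigma_2(Ch_3(V^*))) \;=\; I_4(Ch_3(V^*))^{(3)},
\]
so the entire intersection of polarization conditions collapses to a \emph{single} prolongation. You instead propose to impose four separate conditions involving $I_4,I_5,I_6,I_7(Ch_3(V^*))$; this is redundant (by Proposition~\ref{proideal}, $I_4^{(3)}\subset I_5^{(2)}\subset I_6^{(1)}\subset I_7$, so the $k=4$ condition subsumes the others) and it forces you to compute the degree $5,6,7$ components of the Chow ideal, which you do not need. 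The paper then computes $I_4(Ch_3(V^*))^{(3)}$ \emph{iteratively}, passing one degree at a time through $S^5(S^3V)$, $S^6(S^3V)$, $S^7(S^3V)$ via Proposition~\ref{testyes} and Proposition~\ref{testnot}, rather than attacking $S^7(S^3V)$ directly against all the constraints at once. This iterative filtration is exactly what keeps the case analysis short: at each stage one only retains the few modules that survived the previous stage, and after three steps the list is empty.

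There is also a smaller conceptual slip: the hypothesis $\dim V\leq 6$ is not there to ``keep the case analysis finite'' --- the decomposition of $S^7(S^3V)$ is a finite sum regardless of $\dim V$. Its role is that $S_\lambda V=0$ whenever $\ell(\lambda)>\dim V$, so when $\dim V\leq 6$ all partitions of length $\geq 7$ drop out; the proof that $I_4(Ch_3(V^*))^{(3)}=0$ is carried out for $\dim V=6$ (WLOG), and the conclusion persists for smaller $V$ because the candidate modules can only disappear. For $\dim V\geq 7$, length-$7$ constituents of $S^7(S^3V)$ would need to be examined and the statement of the theorem would not necessarily hold.

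Finally, your write-up stops at ``the main obstacle is precisely this final bookkeeping''; that bookkeeping \emph{is} the proof. As written, your proposal is a plan that identifies the right framework but leaves its essential content --- Proposition~\ref{Pro3}, computing $I_4(Ch_3(V^*))^{(p)}$ for $p=1,2,3$ and showing the third prolongation vanishes --- unexecuted.
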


Also
\begin{theorem}\label{secant2chow38}
Let $\dim\ V\geq6$,
$S_{(5,5,5,5,3,1)}V\subset I_8(\sigma_2(Ch_3(V^*))).$
\end{theorem}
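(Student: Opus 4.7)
The plan is to combine the standard prolongation characterization of the ideal of a second secant variety with the $GL(V)$-module structure of $I_\bullet(Ch_3(V^*))$ developed in the preceding sections.

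First, one checks that $S_{(5,5,5,5,3,1)}V$ actually occurs as an irreducible constituent of $S^8(S^3V)$, i.e., that the plethysm coefficient $p_{(5,5,5,5,3,1)}(8,3)$ is nonzero. The six-part shape of the partition is also what forces the assumption $\dim V \geq 6$. One then locates a copy of $M := S_{(5,5,5,5,3,1)}V$ inside $I_8(Ch_3(V^*))$ itself, for instance as a summand of $\ker h_{8,3}$ or as an irreducible constituent of the module produced by Brill's equations computed in \cite{2015arXiv150802293G}; this takes care of the necessary condition $M \subset I_8(Ch_3(V^*))$.

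Next, I would apply the polarization criterion: $P \in S^8(S^3V)$ lies in $I_8(\sigma_2(Ch_3(V^*)))$ if and only if for every $k=0,1,\ldots,8$ and every $v,u \in \widehat{Ch_3(V^*)}$, the partial polarization $\widetilde P(v^k, u^{8-k})$ vanishes. By $GL(V)$-equivariance, the $k$-th partial polarization map $\mu_k : S^8(S^3V) \to S^k(S^3V) \otimes S^{8-k}(S^3V)$ carries $M$ into the $M$-isotypic component of the target. The copies of $M$ in this tensor product are located via the Littlewood--Richardson rule, and the vanishing condition becomes the representation-theoretic statement that every such copy is contained in the bigraded ideal $I_k(Ch_3(V^*))\otimes S^{8-k}(S^3V) + S^k(S^3V)\otimes I_{8-k}(Ch_3(V^*))$.

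The main difficulty is the intermediate case analysis. The extremes $k=0,8$ reduce to $M \subset I_8(Ch_3(V^*))$, which has already been handled. For $k=1,7$ the analysis is governed by $I_1(Ch_3(V^*))=0$ together with the module structure of $I_7(Ch_3(V^*))$; Theorem~\ref{secant2chow37} provides the key control that there is nothing extra in degree $7$ on the secant-variety side, narrowing the copies of $M$ that must be accounted for. The remaining values $k=2,\ldots,6$ require matching the isotypic pieces of $\mu_k(M)$ against the known modules in $I_2(Ch_3),\ldots,I_6(Ch_3)$, a finite but delicate Littlewood--Richardson bookkeeping that is exactly the kind of calculation the polarization reformulation in \S\ref{prolongation1} was designed to make tractable. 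Once all eight partial polarizations are shown to land in the bi-ideal, the inclusion $M \subset I_8(\sigma_2(Ch_3(V^*)))$ follows.
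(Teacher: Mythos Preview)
Your framework---the multiprolongation/polarization criterion of Proposition~\ref{multipro}---is the right one and is what the paper uses, but your outline has one genuine error and misses the key simplification. The error: Theorem~\ref{secant2chow37} asserts $I_7(\sigma_2(Ch_3(V^*)))=0$, a statement about the \emph{secant variety}, not about $I_7(Ch_3(V^*))$. It gives no information for your $k=1,7$ step, which would require knowing $I_7(Ch_3(V^*))$ itself; so that part of your outline does not go through. Your separate verification that $M\subset I_8(Ch_3(V^*))$ via Brill's equations is also unnecessary.

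The paper avoids the full $k=0,\dots,8$ case split. By Proposition~\ref{proideal} one has $I_5^{(3)}\subset I_6^{(2)}\subset I_7^{(1)}\subset I_8$, so the intersection in Proposition~\ref{multipro} collapses (see Example~\ref{I723} and the example after Proposition~\ref{multipro}) to just two conditions:
\[
I_8(\sigma_2(Ch_3(V^*)))=I_5(Ch_3(V^*))^{(3)}\cap F_{4,4}^{-1}\bigl(I_4\otimes S^4(S^3V)+S^4(S^3V)\otimes I_4\bigr).
\]
Each is then checked concretely. For $I_5^{(3)}$: every module of length $\geq 5$ in $S^5(S^3V)$ lies in $I_5(Ch_3(V^*))$, so by Proposition~\ref{propol} and Schur's lemma $S_{(5,5,5,5,3,1)}V\subset I_5^{(3)}$. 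For the $(4,4)$ split: one lists $I_4(Ch_3(V^*))^c$ explicitly and observes via Littlewood--Richardson that the only pair of summands whose tensor product could possibly contain a partition of length $6$ is $S_{(4,4,4)}V\otimes S_{(4,4,4)}V$, and that product does not contain $S_{(5,5,5,5,3,1)}V$. Hence $S_{(5,5,5,5,3,1)}V\not\subset I_4^c\otimes I_4^c$, so its image under $F_{4,4}$ lands in the bi-ideal, and the inclusion follows. This is the ``delicate bookkeeping'' you allude to, but reduced to a single Littlewood--Richardson observation rather than seven separate ones.
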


\newpage

For $d=4$,
\begin{theorem}\label{secantrchow4}
Consider  $\dim\ V\geq4r$,
$$S_{(6,6,4^{4r-2})}V\subset I_{4r+1}(\sigma_r(Ch_4(V^*)).$$
\end{theorem}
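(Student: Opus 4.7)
The plan combines inheritance, induction on $r$, and the polarization--prolongation framework of \S\ref{prolongation1} together with the plethysm/LR data of \S\ref{lRr}.

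\medskip

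First, I would reduce to $\dim V = 4r$. Any point of $\sigma^0_r(Ch_4(V^*))$ is supported on at most $4r$ linear forms, so $\sigma_r(Ch_4(V^*)) \subset \mathrm{Sub}_{4r}(S^4V)$, and by inheritance (cf.\ \cite{LMsec}) it suffices to show $S_\lambda V \subset I_{4r+1}(\sigma_r(Ch_4(V^*)))$ at the borderline $\dim V = \ell(\lambda) = 4r$, where $\lambda := (6,6,4^{4r-2})$. The base case $r = 1$ is Brill's equation $S_{(6,6,4,4)}V \subset I_5(Ch_4(V^*))$ from \cite{2015arXiv150802293G}.

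\medskip

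For the inductive step, fix $V = V' \oplus V''$ with $\dim V' = 4(r-1)$ and $\dim V'' = 4$, and set $\lambda' = (6,6,4^{4(r-1)-2})$. The skew shape $\lambda/\lambda'$ is a $4 \times 4$ square with a unique LR filling of content $(4^4)$, giving $c^\lambda_{\lambda',(4^4)} = 1$. Combined with $p_{(4^4)}(4,4) = 1$, this identifies $S_{\lambda'}V' \otimes S_{(4^4)}V'' = S_{\lambda'}V' \otimes (\det V'')^4$ as a multiplicity-one Levi summand of $S_\lambda V|_{GL(V')\times GL(V'')}$. Using the polarization formalism of \S\ref{prolongation1}, I would select a highest weight vector $\phi \in S^{4r+1}(S^4V)^*$ of $S_\lambda V$ whose ``split'' projection to $S^{4r-3}(S^4V')^* \otimes S^4(S^4V'')^*$ factors as $\phi'\cdot \phi''$, where $\phi'$ is a highest weight vector of $S_{\lambda'}V'$ (supplied by the inductive hypothesis $S_{\lambda'}V' \subset I_{4r-3}(\sigma_{r-1}(Ch_4(V'^*)))$) and $\phi''$ spans the one-dimensional $(\det V'')^4$-isotypic component of $S^4(S^4V'')^*$.

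\medskip

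To conclude, I would evaluate $\phi$ on a ``split'' point $P = P' + Q$ with $P' \in \sigma_{r-1}(Ch_4(V'^*))$ and $Q \in Ch_4(V''^*)$. Because $\phi$ has $V''$-weight $(4,4,4,4)$, the multinomial polarization $\phi(P'+Q) = \sum_k \binom{4r+1}{k}\tilde\phi(P'^k, Q^{4r+1-k})$ collapses to the single term $k = 4r-3$ (each $Q \in S^4V''$ contributing $4$ units of $V''$-weight), and $\phi(P'+Q) \propto \phi'(P')\cdot\phi''(Q)$; the inductive hypothesis gives $\phi'(P') = 0$. A generic $P \in \sigma_r(Ch_4(V^*))$ can be rotated by some $g \in GL(V)$ so that one of its $r$ Chow factors lies in $S^4V''$ and the remaining $r-1$ lie in $S^4V'$, so the $GL(V)$-sweep of split configurations is dense in $\sigma_r(Ch_4(V^*))$; combined with the $GL(V)$-irreducibility of $S_\lambda V$, vanishing of $\phi$ on split configurations promotes to vanishing of $S_\lambda V$ on all of $\sigma_r(Ch_4(V^*))$.

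\medskip

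The main obstacle is the factorization $\phi_{\mathrm{split}} \propto \phi'\cdot\phi''$ asserted in the second paragraph. A priori the highest weight vector $\phi$ of $S_\lambda V$ may have components mixing $S^iV'\otimes S^{4-i}V''$-type pieces for $0 < i < 4$, and one must show that the projection of $\phi$ to the pure tensor subspace $S^{4r-3}(S^4V')^*\otimes S^4(S^4V'')^*$ is non-zero and of the claimed product form. This requires a careful plethysm/LR tracking of which weight-$(\lambda',(4^4))$ vectors in the various coproduct summands of $S^{4r+1}(S^4V)$ the highest weight vector of $S_\lambda V$ projects onto, leveraging the two unit multiplicities $c^\lambda_{\lambda',(4^4)} = p_{(4^4)}(4,4) = 1$ to pin down the factorization up to a scalar.
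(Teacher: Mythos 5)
Your proposal takes a genuinely different route from the paper's. The paper's proof is a one-step argument: by Theorem~\ref{sidman}, $I_{4r+1}(\sigma_r(Ch_4(V^*)))=I_5(Ch_4(V^*))^{(4r-4)}$; by Proposition~\ref{propol} this prolongation is the preimage of $S^{4r-4}(S^4V)\otimes I_5(Ch_4(V^*))$ under $F_{4r-4,5}$; since $I_5(Ch_4(V^*))^c\cong S^4(S^5V)$ and its lexicographically smallest constituent is $S_{(6,6,6,2)}V$, the Littlewood--Richardson rule forbids $S_{(6,6,4^{4r-2})}V$ (which is lex-smaller) from appearing in $S^{4r-4}(S^4V)\otimes I_5(Ch_4(V^*))^c$, so by Schur's lemma every copy of it lands in the prolongation. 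There is no induction, no Levi splitting, no evaluation on points. Your inductive splitting $V=V'\oplus V''$ and the use of $c^\lambda_{\lambda',(4^4)}=p_{(4^4)}(4,4)=1$ is a legitimately different strategy, and the factorization difficulty you flag is real but, I believe, resolvable: since a highest weight vector $\phi$ of $S_\lambda V$ is killed by all $\mathfrak{gl}(V')\oplus\mathfrak{gl}(V'')$-raising operators, its pure-split, $V''$-weight $(4,4,4,4)$ component has $S^4(S^4V'')$-factor forced into the one-dimensional $(\det V'')^4$-line, and the $V'$-factor is a weight-$\lambda'$ singular vector, so the split projection does factor as $\phi'\otimes\phi''$ up to scale.

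The more serious gap is the final ``sweep + irreducibility'' step, and I don't think it can be repaired without essentially reconstructing the multiprolongation criterion. You show that the \emph{highest weight vector} $\phi$ vanishes on the split locus $Z$ (the image of $\hat\sigma_{r-1}(Ch_4(V'^*))\times\hat{Ch}_4(V''^*)$). But $Z$ is only $GL(V')\times GL(V'')$-invariant, not $GL(V)$-invariant, so the restriction map $S_\lambda V\to\mathbb{C}[Z]$ is not a $GL(V)$-module map and irreducibility buys you nothing. To conclude $\phi(z^{4r+1})=0$ for a generic $z$ after rotating $g^{-1}z$ into split position, you would need $(g^{-1}\phi)(\text{split})=0$, i.e.\ you would need \emph{every} element of $S_\lambda V$ (equivalently, every weight vector) to vanish on $Z$, not just the highest weight one. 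But your weight argument --- that only the $k=4r-3$ term in the multinomial expansion survives --- depends crucially on $\phi$ having $V''$-weight exactly $(4,4,4,4)$, and a general lowering of $\phi$ has a different $V''$-weight and possibly a $V''$-degree that is a multiple of $4$ for several values of $k$. So the collapse to a single factorized term fails for non-highest vectors, and the promotion from ``$\phi|_Z=0$'' to ``$S_\lambda V\subset I(\sigma_r)$'' does not follow. The paper avoids this issue entirely by never evaluating on points: the Schur's-lemma argument on $F_{4r-4,5}$ automatically handles every copy of the module at once and does not distinguish the highest weight vector from the rest. If you want to salvage your approach, replace the density argument with the multiprolongation criterion (Proposition~\ref{multipro}): the vanishing of $\bar\phi(z_1^{\delta_1},\dots,z_r^{\delta_r})$ for the single profile $\vec\delta=(5,4,\dots,4)$ is what is required, and that is precisely what the polarization-map formulation encodes; but at that point you have reconstructed the paper's argument.

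A minor point: the base case attribution to ``Brill's equations'' is not established in the cited reference (which treats $d=3$, giving $S_{(7,3,2)}V$). What you actually need for $r=1$ is $S_{(6,6,4,4)}V\subset I_5(Ch_4(V^*))$, which follows from the paper's computation $I_5(Ch_4(V^*))^c\cong S^4(S^5V)$ together with the lex-minimality of $S_{(6,6,6,2)}V$ in $S^4(S^5V)$ --- i.e., from the $r=1$ instance of the very lex/LR observation that drives the paper's proof.
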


A partition is an even partition if all the components of the partition are even numbers. When $d$ is even, any even partition with length no more than $k$ has positive plethysm coefficients in $S^k(S^dV)$ \cite {MR2745569}.
\begin{theorem}\label{seccanteven}
 The isotypic component of $S_{({(2m+2)}^m,{(2m)}^{2mr-m})}V$ is in $I_{2mr+1}(\sigma_r(Ch_{2m}(V^*)))$.
 Moreover any module with even partition and smaller than $((2m+2)^{2m-1},2)$ {\rm(}with respect to the lexicographic order\ in\ {\rm \S\ref{lRr})} is in $ I_{2mr+1}(\sigma_r(Ch_{2m}(V^*)))$.
\end{theorem}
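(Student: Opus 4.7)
The plan is to combine the prolongation reformulation of Section \ref{prolongation1} with the known $GL(V)$-module structure of Brill's equations for $Ch_{2m}(V^*)$ from \cite{2015arXiv150802293G}, together with the plethysm-positivity for even partitions from \cite{MR2745569}.

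First I confirm the representation-theoretic prerequisites. The candidate partition $\mu = ((2m+2)^m, (2m)^{2mr-m})$ has $|\mu|=2m(2mr+1)$, $\ell(\mu)=2mr$, and is even. Because $d=2m$ is even and $\ell(\mu)\le 2mr+1$, the result of \cite{MR2745569} forces $p_\mu(2mr+1,2m)>0$, so $S_\mu V$ embeds into $S^{2mr+1}(S^{2m}V)$; the question reduces to showing that the isotypic component lies in the kernel of evaluation on $r$-secants of $\hat{Ch}_{2m}(V^*)$.

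Second I invoke the prolongation-as-polarization framework from Section \ref{prolongation1}. This replaces the condition ``$F$ vanishes on $x_1+\cdots+x_r$ for all $x_i \in \hat{Ch}_{2m}(V^*)$'' by a linear condition on each isotypic component of $S^{2mr+1}(S^{2m}V)$, expressed via a polarization map whose image is controlled by Littlewood-Richardson coefficients (Section \ref{lRr}). The strategy is to pick a highest weight vector of $S_\mu V$ in $S^{2mr+1}(S^{2m}V)$, push it through the polarization map, and match the result against the $GL(V)$-module of Brill's equations in degree $d+1=2m+1$: the $m$ rows of length $d+2=2m+2$ force an incidence that cannot be realized by $r$ products of $2m$ linear forms, so the image is annihilated by the Brill relations and the vector therefore lies in $I_{2mr+1}(\sigma_r(Ch_{2m}(V^*)))$.

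Third, for the ``moreover'' statement I argue by monotonicity in the lex order. Any even partition $\lambda$ lex-smaller than $((2m+2)^{2m-1},2)$ admits a highest weight vector whose polarization image factors, via the same Littlewood-Richardson decomposition, either through the image for $\mu$ (when $\lambda$ has parts $\ge 2m+2$) or through a simpler Brill-type relation (when all parts of $\lambda$ are $\le 2m$, in which case the prolongation of the degree-$(d+1)$ component of $I(Ch_{2m}(V^*))$ already annihilates it). Induction on lex order then finishes the argument. The main obstacle will be the explicit computation in the second step: exhibiting the highest weight vector of $S_\mu V$ and verifying that the polarization map sends it into the span of Brill's equations. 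This requires a careful Young-symmetrizer construction, compatible with the rectangular sub-shape of width $m$ and height $d+2=2m+2$ at the top of $\mu$, together with Littlewood-Richardson book-keeping to track multiplicities in the image.
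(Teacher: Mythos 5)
Your proposal identifies the right polarization framework (Proposition~\ref{propol} and Corollary~\ref{proisec}), but the core of the argument is missing and the substitute you propose would not work.

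The crucial ingredient you never invoke is Theorem~\ref{Lowweight}, which identifies $S_{((2m+2)^{2m-1},2)}V$ as the lex-smallest module in $S^{2m}(S^{2m+1}V)$. The paper's proof hinges on this together with Corollary~\ref{difference}: since $FH_{2m+1,2m}$ is surjective onto $S^{2m}(S^{2m+1}V)$, the complement $I_{2m+1}(Ch_{2m}(V^*))^c$ is (as an abstract module) a submodule of $S^{2m}(S^{2m+1}V)$, hence every partition occurring in it is lex-$\geq ((2m+2)^{2m-1},2)$. Now the Littlewood--Richardson \emph{necessary condition} ($c^\nu_{\pi\mu}>0$ forces $\nu\geq\mu$ lex) immediately shows that $S^{2mr-2m}(S^{2m}V)\otimes I_{2m+1}(Ch_{2m}(V^*))^c$ contains no module lex-smaller than $((2m+2)^{2m-1},2)$. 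Schur's lemma then places the entire isotypic component of any such module inside $F_{2mr-2m,2m+1}^{-1}\bigl(S^{2mr-2m}(S^{2m}V)\otimes I_{2m+1}(Ch_{2m}(V^*))\bigr) = I_{2m+1}(Ch_{2m}(V^*))^{(2m(r-1))}$, which by Corollary~\ref{proisec} lies in $I_{2mr+1}(\sigma_r(Ch_{2m}(V^*)))$. This one-shot argument handles both the specific partition $((2m+2)^m,(2m)^{2mr-m})$ and the ``moreover'' part simultaneously; no induction on lex order is needed.

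Your replacement strategy has two problems. First, you propose to ``match the result against the $GL(V)$-module of Brill's equations in degree $2m+1$,'' but Brill's equations are set-theoretic and there is no claim in the paper that they span $I_{2m+1}(Ch_{2m}(V^*))$; the paper instead characterizes the full degree-$(2m+1)$ piece of the ideal as $\Ker\,FH_{2m+1,2m}$ via Hadamard's observation. Second, the plan to explicitly construct a highest weight vector via Young symmetrizers and verify an ``incidence that cannot be realized by $r$ products'' is not a proof sketch but a hope; the paper deliberately avoids any explicit highest weight vector computation here, arguing purely at the level of abstract modules. Without Theorem~\ref{Lowweight} you have no control over which modules occur in the complement of the ideal, and the rest of your argument has nothing to stand on.
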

\subsection{Organization}
In $\S\ref{background}$, I review semi-standard tableaux, G-variety, the Little-Richardson rule, how to write down highest weight vectors of a $GL(V)$-module via raising operators, and the Foulkes-Howe map related to the ideal of the Chow variety $Ch_d(V^*)$. In $\S\ref{prolongation}$, I explain how to compute prolongations and multiprolongations of a $GL(V)$-module via differential operators and representation theory to obtain equations for $\s_r(Ch_d(V^*))$. In $\S\ref{degree3}$, I prove Theorems \ref{secant2chow37} and \ref{secant2chow38}. In $\S\ref{degree4}$, I prove Theorem \ref{secantrchow4}. In $\S\ref{degreeeven}$, I prove a theorem related to  plethysm coefficients of $S^{2m}(S^{2m+1}V)$ , and using this I prove Theorem \ref{seccanteven}. In $\S\ref{plethysm}$, I prove a property about plethysm coefficients. In \S\ref{appendix}, I include knowledge in computer science about ${\mathbf{P}}$ versus ${\mathbf{NP}}$ problem, circuits, complexity classes and Valiant's Conjecture
\subsection{Acknowledgement}
 I thank my advisor J.M. Landsberg for discussing all the details throughout this article. I thank C. Ikenmeyer and M. Michalek for discussing the plethysm coefficients.  Most of this work was done while the author was visiting the Simons Institute for the Theory of Computing, UC Berkeley for the {\it Algorithms and Complexity in Algebraic Geometry} program, I thank the Simons Institute for providing a good research environment.
\section{Preliminaries }\label{background}
\subsection{G-variety}\label{Gvariety}
I follow the notation in \cite[\S4.7]{MR2865915}.
\begin{definition}
Let $W$ be a complex vector space. A variety $X\subset\mathbb{P}W$ is called a {\it G-variety} if $W$ is a module for the group  $G$
and for all $g\in G$ and $x\in X$, $g\cdot x\in X.$
\end{definition}
G has an induced action on $S^dW^*$ such that for any $P\in S^dW^*$ and $w\in W$,
$g\cdot P(w)=P(g^{-1}\cdot w)$. $I_d(X)$ is a linear subspace of $S^dW^*$ that is invariant
under the action of $G$, therefore:
\begin{proposition}
If $X\subset\mathbb{P}W$ is a $G$-variety, then the ideal of $X$ is a $G$-submodule of $S^\bullet W^*:=\bigoplus_{d=0}^{\infty}S^dW^*$.
\end{proposition}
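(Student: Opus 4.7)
The plan is to reduce to showing each graded piece $I_d(X)\subset S^dW^*$ is $G$-stable, since the ideal decomposes as $I(X)=\bigoplus_d I_d(X)$ and the induced $G$-action on $S^\bullet W^*$ preserves degree. So the task becomes: given $g\in G$ and $P\in I_d(X)$, verify that $g\cdot P\in I_d(X)$.

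First I would unwind the definition. Membership $P\in I_d(X)$ means $P$ vanishes on the affine cone $\widehat{X}\subset W$ over $X$, i.e.\ $P(w)=0$ for every $w\in W$ whose projective class lies in $X$. The hypothesis that $X$ is a $G$-variety, combined with the linearity of the $G$-action on $W$, implies that $\widehat{X}$ is itself $G$-stable. Then for any $g\in G$ and any $w\in\widehat{X}$, the formula for the induced action on polynomials gives
$$(g\cdot P)(w)\;=\;P(g^{-1}\cdot w),$$
and $g^{-1}\cdot w\in\widehat{X}$ by $G$-invariance of the cone, so $P(g^{-1}\cdot w)=0$. Thus $g\cdot P$ vanishes on $\widehat{X}$, i.e.\ $g\cdot P\in I_d(X)$. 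Running this for every $d$ and every $g$ shows each $I_d(X)$ is a $G$-submodule, and summing over $d$ gives the claim for $I(X)\subset S^\bullet W^*$.

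There is no substantive obstacle; the statement is essentially a matter of unwrapping the contragredient action. The only point that merits a brief justification is the passage between $X\subset\mathbb{P}W$ and its affine cone $\widehat{X}\subset W$, but since $G$ acts linearly on $W$ this equivalence is immediate, and $I_d(X)$ is defined precisely as the space of degree-$d$ polynomials vanishing on $\widehat{X}$.
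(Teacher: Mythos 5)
Your proof is correct and matches the paper's approach: the paper also uses the contragredient action $(g\cdot P)(w)=P(g^{-1}\cdot w)$ together with $G$-stability of $X$ to conclude that each graded piece $I_d(X)$ is $G$-invariant. You have simply spelled out the passage to the affine cone, which the paper leaves implicit.
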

\begin{example}
The group $GL(V)$ has an induced action on $S^dV$ and $S^k(S^dV^*)$ similarly. $Ch_d(V)$ and its secant varieties
are invariant under the action of $GL(V)$, therefore they are $GL(V)$-varieties and their ideals are $GL(V)$-submodules of $S^\bullet (S^dV^*)=\bigoplus_{k=0}^{\infty}S^k(S^dV^*)$.
\end{example}

Let $X\subset\mathbb{P}W$ be a $G$-variety, and $M$ be an irreducible submodule of $S^\bullet W^*$, then either $M\subset I(X)$
or $M\cap I(X)=\emptyset$. Thus to test if $M$ gives equations for $X$, one only need to test one polynomial in $M$.
\subsection{Semi-standard tableaux }
I follow the notation in \cite{FH} and \cite{MR2865915}.
A partition $\lambda$ of an integer $d$ is $ \lambda=(\lambda_1,\cdots,\lambda_m)$
 with $\lambda_1\geq\cdots\geq\lambda_m>0$, $\lambda_j\in\mathbb{N}$ and $\sum_{i=1}^{m}\lambda_i=d$. We say $d$ is the {\it order} of $\lambda$ and $m$ is the {\it length} of $\lambda$. We often denote this by $\lambda\vdash d$. To
a partition $\lambda\vdash d$, we associate a {\it Young diagram}, which is a left aligned collection of boxes with $\lambda_i$ boxes in row $i$.

A {\it filling} of a Young diagram using the numbers $\{1,\cdots,l\}$ is an assignment of one number
to each box, with repetitions allowed. A filled Young diagram is called a {\it Young tableau}.
%A {\it standard filling} is one in which the entries are strictly increasing in the both the rows and
%columns, while
A {\it semi-standard} filling is one in which the entries are strictly increasing in the
columns and weakly increasing in the rows. {\it Semi-standard tableau} is
similarly defined.

Let $\lambda$ be a partition with order $kd$, a semi-standard tableau of {\it shape $\lambda$
and content} $k\times d$ is a semi-standard tableau associated to $\lambda$ and filled with $\{1,\cdots,k\}$
such that each $i\in\{1,\cdots,k\}$  appears $d$ times.
\subsection{The Little-Richardson rule and Pieri's rule}\label{lRr}
Let $\pi$ and $\mu$ be two partitions, the tensor product $S_\lambda V\otimes S_\mu V$ is a $GL(V)$-module.
The littlewood-Richardson coefficients $c_{\pi\mu}^{\nu}$ are defined to be the multiplicity of $S_\nu V$
in $S_\lambda V\otimes S_\mu V$, i.e. $S_\lambda V\otimes S_\mu V=\bigoplus_{\nu}c_{\pi\mu}^{\nu}S_\nu V$.

We order partitions {\it lexicographically}: $\lambda>\mu$ if the first nonvanishing $\lambda_i-\mu_i$ is positive.
Necessary conditions for  $c_{\pi\mu}^{\nu}$ to be positive are $|\nu|=|\pi|+|\mu|$ and $\nu$ is greater than
$\pi$ and $\mu$.

In particular $S_\lambda V\otimes S^d V=c_{\lambda,(d)}^{\nu}S_\nu V$.
\begin{theorem}{\rm(Pieri's\ rule)}
\begin{eqnarray*}
 c_{\lambda,(d)}^{\nu}=
\begin{cases}
1\ {\rm\ if\ \nu\ is\ obtained\ from\ \lambda\ by\ adding\ d\ boxes\ to} \\
\ \ \ {\rm  the\ rows\ of\ \lambda\ with\ no\ two\ in\ the\ same\ column};\\
0\ {\rm otherwise}.
\end{cases}
\end{eqnarray*}
\end{theorem}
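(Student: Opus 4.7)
The plan is to reduce the statement to an identity for Schur polynomials and then verify that identity by a weight-preserving bijection via row insertion. First I would pass from $GL(V)$-modules to characters: setting $n=\dim V$, the character of $S_\mu V$ is the Schur polynomial $s_\mu(x_1,\ldots,x_n)$, so $c_{\lambda,(d)}^{\nu}$ equals the coefficient of $s_\nu$ in the product $s_\lambda\cdot s_{(d)}$. Since $s_{(d)}=h_d$ is the complete homogeneous symmetric polynomial, the theorem is equivalent to the identity
$$s_\lambda \cdot h_d \;=\; \sum_{\nu} s_\nu,$$
where the sum ranges over partitions $\nu$ obtained from $\lambda$ by adding $d$ boxes with no two in the same column.

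Next I would use the SSYT generating-function expressions $s_\lambda(x)=\sum_{T} x^T$ (with $T$ ranging over SSYT of shape $\lambda$ filled from $\{1,\ldots,n\}$) and $h_d(x)=\sum_{1\le i_1\le\cdots\le i_d\le n} x_{i_1}\cdots x_{i_d}$. The product $s_\lambda\cdot h_d$ is then a generating function over pairs $(T,w)$, where $w=i_1\cdots i_d$ is a weakly increasing word. The bijective step is: given $(T,w)$, I would row-insert $i_1,i_2,\ldots,i_d$ successively into $T$ by the standard Robinson--Schensted bumping procedure to produce an SSYT $U$ of some shape $\nu\supset\lambda$; the weights match by construction, $x^T x^w=x^U$. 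To invert, given an SSYT $U$ of shape $\nu$ with $\nu/\lambda$ a horizontal strip of size $d$, I would reverse-bump the boxes of $\nu/\lambda$ in right-to-left order to recover the unique pair $(T,w)$.

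The heart of the argument, and its main obstacle, is the standard RSK lemma that row-inserting a weakly increasing word produces a horizontal strip (and conversely, that the reverse procedure on any horizontal strip terminates with a valid SSYT and a weakly increasing word). I would prove this by induction on $d$: a single row-insertion of a letter $i$ either appends a new box at the end of row $1$ (when $i$ is at least all entries of row $1$) or bumps a strictly larger entry $j>i$ down to row $2$; in either case, a subsequent insertion of a letter $i'\ge i$ places its new box strictly to the right of the previous new box in the same row, so the bump paths never revisit the same column. Granted this invariant, the bijection produces each admissible $\nu$ exactly once, yielding $s_\lambda\cdot h_d=\sum_\nu s_\nu$ with multiplicity one for each horizontal-strip extension, and hence Pieri's rule.
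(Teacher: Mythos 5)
The paper does not prove Pieri's rule; it is stated without proof as a classical preliminary (the section cites Fulton--Harris and Landsberg's book), so there is no paper argument for your proposal to be compared against. Judged on its own, your proof is the standard combinatorial one: pass to Schur polynomials, use $s_{(d)}=h_d$, expand both sides as SSYT generating functions, and build a weight-preserving bijection by row-inserting the letters of a weakly increasing word of length $d$ into a tableau of shape $\lambda$. The approach is correct, and you have identified the right crux, namely the row-bumping lemma. One imprecision worth fixing: you write that a subsequent insertion of $i'\ge i$ places its new box ``strictly to the right of the previous new box in the same row.'' The correct statement is that the new box lies strictly to the right of and \emph{weakly above} the previous one --- it need not be in the same row. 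Only the column condition matters for the conclusion that the $d$ new boxes form a horizontal strip, so the argument survives the correction, but as phrased the invariant is stronger than what the bumping lemma actually gives. You should also make explicit, when defining the inverse, that reverse-bumping the boxes of $\nu/\lambda$ from right to left pops out a weakly decreasing sequence (hence a weakly increasing word after reversal) and that this reverse procedure is well-defined on any horizontal strip; this is the content of the reverse bumping lemma and is symmetric to the forward version, but a complete proof would state it rather than fold it silently into the sentence ``to invert, \dots.'' With these two points tightened, the argument is a complete and correct proof of Pieri's rule.
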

\begin{example}
By Pieri's rule,
$$S^aV\otimes S^bV=\bigoplus_{0\leq t\leq s,s+t=a+b}S_{(s,t)}V.$$
$$S_{(d,d)}V\otimes S^{d^2-d}V=\bigoplus_{j=0}^{d}S_{(d^2-j,d,j)}V.$$
\end{example}
% For a given partition  and alphabet A, respectively let SYT(A) and
%SSYT(A) denote the sets of standard and semi-standard tableau lled by letters from A.
%When A is understood, we drop it from the notation.
\subsection{Highest weight vectors of modules in $S^k(S^dV)$ via raising operators}
I follow the notation in \cite{FH}.
The group $GL(V)$ has a natural action on $V^{\otimes d}$ such that
$g\cdot(v_1\otimes v_2\cdots\otimes v_d)=g\cdot v_1\otimes\cdots\otimes g\cdot v_d$.
Let dim $V=n$ and let $\{e_1,e_2,\cdots,e_n\}$ be a basis of $V$.
Let $B\subset GL(V)$ be the subgroup of upper-triangular matrices (a Borel subgroup).
For any partition $\lambda=(\lambda_1,\cdots,\lambda_n)$, let $S_\lambda V$ be the irreducible $GL(V)$-module determined by the partition $\lambda$.
For each  $S_\lambda V$, there is a unique line that is preserved by $B$, called a {\it highest weight line}.
Let $\mathfrak{gl}(V)$ be the Lie algebra of $GL(V)$,
there is an induced action of $\mathfrak{gl}(V)$ on $V^{\otimes d}$. For $X\in \mathfrak{gl}(V)$,
$$X.(v_1\otimes v_2\cdots\otimes v_d)=X.v_1\otimes v_2\cdots\otimes v_d+v_1\otimes X.v_2\otimes\cdots \otimes v_d+\cdots+v_1\otimes v_2\cdots \otimes v_{d-1}\otimes X.v_d.$$
Let $E^i_j\in\mathfrak{gl}(V)$ such that $E^i_j(e_j)=e_i$ and $E^i_j(e_k)=0$ when $k\neq j$. If $i<j$,  $E^i_j$ is called
 a raising operator; if  $i>j$,  $E^i_j$  is called a lowering operator.

A highest weight vector of a $GL(V)$-module is a weight vector that is killed by all raising operators.
Each realization of the module $S_\lambda V$ has a unique highest weight line. Let $W$ be a $GL(V$)-module, the multiplicity of $S_\lambda V$ in $W$ is equal to the dimension of the highest weight space with respect to the partition $\lambda$.

Define the weight space $W_{(a_1,\cdots,a_n)}$$\subset$ $ S^k(S^dV)$ to be the set of all the weight vectors whose weights are $(a_1,\cdots,a_n)$.
Note that  $S^dV$ has a natural basis $\{e_1^{\alpha_1}\cdots e_n^{\alpha_n}\}_{\alpha_1+\cdots+\alpha_n=d} $.
\begin{example}
$S_{(4,2)}V\subset S^3(S^2V)$ has multiplicity 1.
\begin{proof}
Let $v$ be a highest weight vector of $S_{(4,2)}V$. The weight space $W_{(4,2)}$ has a basis
$\{(e_1^2)^2(e_2^2),(e_1^2)(e_1e_2)^2\}$. Write
$v=a(e_1^2)^2(e_2^2)+b(e_1^2)(e_1e_2)^2$, then $E^1_2v=0$ implies
$(2a+2b)(e_1^2)^2(e_1e_2)=0$, therefore $a=-b$, so the multiplicity  of $S_{(4,2)}V$ in $S^3(S^2V)$
is 1.
\end{proof}
\end{example}
\begin{proposition}\label{chow2}
The highest weight vector $f$ of $S_{(2^k)}V\subset S^k(S^2V)$ is
determinant of the $k\times k$ matrix M with $M_{ij}=e_ie_j$ for $1\leq i,j\leq k$.
\end{proposition}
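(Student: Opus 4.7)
My plan is to verify that $f := \det(M)$ is a nonzero element of $S^k(S^2V)$, has weight $(2^k)$, and is annihilated by every raising operator $E^a_b$ with $a<b$. Since each realization of $S_{(2^k)}V$ contains a unique highest weight line, these three properties identify $f$ (up to scalar) as the desired highest weight vector. Note that the determinant is a priori well-defined because the entries $M_{ij}=e_ie_j$ live in the commutative algebra $\Sym V$, so the Leibniz expansion $f=\sum_{\sigma\in\mathfrak{S}_k}\mathrm{sgn}(\sigma)\prod_{i=1}^k M_{i,\sigma(i)}$ unambiguously yields an element of $S^k(S^2V)$.

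The weight computation is immediate: each summand in the Leibniz expansion has weight $\sum_i(\epsilon_i+\epsilon_{\sigma(i)})=2(\epsilon_1+\cdots+\epsilon_k)$, which is exactly $(2^k)$. For nonvanishing I would exhibit a single monomial whose coefficient is $\pm 1$; the identity permutation contributes $e_1^2 e_2^2\cdots e_k^2$, and this monomial cannot arise from any $\sigma\neq\mathrm{id}$ because a nonidentity permutation has some $i$ with $\sigma(i)\neq i$, producing the mixed factor $e_ie_{\sigma(i)}$.

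The substantive step is showing $E^a_b\cdot f=0$ for $a<b$. On matrix entries the derivation property of $E^a_b$ on $S^2V$ gives $E^a_b(M_{ij})=\delta_{ib}M_{aj}+\delta_{jb}M_{ia}$. Applying Leibniz to $f$ and regrouping according to which Kronecker delta contributes, $E^a_b\cdot f$ splits as a sum of two pieces. The first piece, from the $\delta_{ib}$ contributions, equals the full Leibniz expansion of the determinant of the matrix $M^{\mathrm{row}}$ obtained from $M$ by replacing row $b$ with row $a$; after reindexing by $\sigma^{-1}(b)$, the second piece equals the determinant of the matrix $M^{\mathrm{col}}$ obtained from $M$ by replacing column $b$ with column $a$. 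Both modified matrices have two equal rows (respectively columns), so both determinants vanish. For raising operators $E^a_b$ with $b>k$, the action is automatically zero because no entry $M_{ij}$ involves $e_b$.

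The main obstacle is the bookkeeping in the Leibniz step, namely confirming that the two groups of terms really reassemble into the full determinants $\det(M^{\mathrm{row}})$ and $\det(M^{\mathrm{col}})$ rather than partial expansions. This is a direct rewrite of the sum over $\mathfrak{S}_k$, handled once by a clean index substitution; once checked, annihilation follows for every raising operator and the proposition is established.
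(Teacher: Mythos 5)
Your argument is correct and takes essentially the same route as the paper: show $\det M$ is a weight-$(2^k)$ vector annihilated by every raising operator, so it must span the unique highest weight line. The paper's proof is terser: it invokes multiplicity-one of $S_{(2^k)}V$ in $S^k(S^2V)$ and an $\mathfrak{S}_k$-symmetry to reduce the check to the single simple operator $E^1_2$, and then declares the remaining verification ``straightforward.'' You bypass the multiplicity appeal entirely: the Leibniz expansion handles every $E^a_b$ at once, and the conceptual observation that the two groups of terms reassemble into determinants $\det M^{\mathrm{row}}$ and $\det M^{\mathrm{col}}$ with a repeated row (resp.\ column) is a clean, self-contained way to discharge the paper's ``straightforward'' step. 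You also explicitly verify nonvanishing by isolating the coefficient of $(e_1^2)(e_2^2)\cdots(e_k^2)$, which the paper leaves implicit but which is genuinely needed to conclude $f$ is a highest weight vector rather than $0$. The bookkeeping concern you flag at the end is real but resolves exactly as you sketch, via the substitutions $l=b$ and $l=\sigma^{-1}(b)$.
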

\begin{proof}
Since $S_{(2^k)}V\subset S^k(S^2V)$ is of multiplicity one, we only need to prove
$\det M$ is killed by all raising operators $E_{i+1}^{i}$ ($i=1,2,...,k-1$).
By symmetry, we only need to prove $\det M$ is killed by the raising operator $E_{2}^{1}$.
It is straightforward to verify $\det M$ is killed by the raising operator $E_{2}^{1}$.
\end{proof}
\begin{remark} By observation, $\sigma_{k}(Ch_2(V^*))\subset S^2V^*$ can be seen as the variety of symmetric matrices of rank at most $2k$, whose ideal is generated by $(2k+1)\times (2k+1)$ minors of the matrix. By Proposition \ref{chow2}, these $(2k+1)\times (2k+1)$ minors are corresponding to the module $S_{(2^{2k+1})}V\subset S^{2k+1}(S^2V)$, therefore  $S_{(2^{2k+1})}V$ is the generator of the ideal of $\sigma_{k}(Ch_2(V^*))$ for $k\geq1$.
\end{remark}
\begin{proposition}\label{hwvector732}
The highest weight vector $f$ of $S_{(7,3,2)}V\subset S^4(S^3V)$ is
\begin{eqnarray*}
f&=&(e_1^3)^2(e_1e_2^2)(e_2e_3^2)-2(e_1^3)^2(e_1e_2e_3)(e_2^2e_3)+(e_1^3)^2(e_1e_3^2)(e_2^3)-(e_1^3)(e_1^2e_2)^2(e_2e_3^2)\\
&+&2(e_1^3)(e_1^2e_2)(e_1^2e_3)(e_2^2e_3)-4(e_1^3)(e_1^2e_2)(e_1e_2^2)(e_1e_3^2)+0(e_1^3)(e_1^2e_3)(e_1e_2^2)(e_1e_2e_3)\\
&+&3(e_1^2e_2)^3(e_1e_3^2)+4(e_1e_2e_3)^2(e_1^2e_2)(e_1^3)-(e_1^3)(e_1^2e_3)^2(e_2^3)+3(e_1^2e_2)(e_1e_2^2)(e_1^2e_3)^2\\
&-&6(e_1^2e_2)^2(e_1^2e_3)(e_1e_2e_3).
\end{eqnarray*}
\end{proposition}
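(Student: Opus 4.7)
My plan is a direct verification that $f$ is a nonzero weight vector of weight $(7,3,2)$ in $S^4(S^3V)$ that is annihilated by the simple raising operators $E^1_2$ and $E^2_3$. Once this is established, $f$ is a highest weight vector and therefore generates a copy of $S_{(7,3,2)}V$, spanning its highest weight line.

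First I would check that each summand of $f$ lies in the weight space $W_{(7,3,2)}$. This is immediate: for example the first term $(e_1^3)^2(e_1e_2^2)(e_2e_3^2)$ has exponent vector $(6+1,\,2+1,\,2) = (7,3,2)$, and each of the remaining eleven summands admits an analogous one-line check. The twelve summands, viewed as unordered $4$-tuples of degree-$3$ monomials in $e_1,e_2,e_3$, are pairwise distinct elements of the natural monomial basis of $W_{(7,3,2)}$ (one is listed with coefficient $0$, accounting for that basis element without contributing to $f$), so $f \neq 0$.

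The substantive step is applying the raising operators. Recall $E^1_2$ acts on $S^3V$ by the derivation $E^1_2(e_1^a e_2^b e_3^c) = b\,e_1^{a+1}e_2^{b-1}e_3^c$, and extends to $S^4(S^3V)$ as a derivation across the four tensor factors, landing in the weight space $W_{(8,2,2)}$; likewise $E^2_3$ lands in $W_{(7,4,1)}$. I would enumerate natural monomial bases of these two smaller weight spaces, apply the Leibniz rule to each of the twelve summands of $f$, collect the resulting terms basis-element by basis-element, and verify that every coefficient vanishes.

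The only real obstacle is bookkeeping: each raising operator produces roughly four dozen elementary terms that must be sorted by image in the target weight space. A compact table indexing the twelve summands of $f$ against the unordered multisets of degree-$3$ monomials they produce under $E^1_2$ and $E^2_3$ makes the cancellations transparent, and a short computer-algebra check removes any residual doubt. No additional input (multiplicity of $S_{(7,3,2)}V$ in $S^4(S^3V)$, plethysm identities, etc.) is needed for the conclusion as stated.
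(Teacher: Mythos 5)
Your plan is computationally the same as the paper's: both arguments work inside the twelve-dimensional weight space $W_{(7,3,2)}\subset S^4(S^3V)$ and both reduce to the action of the two simple raising operators $E^1_2$ and $E^2_3$. The only difference is direction. The paper writes $f$ as a generic linear combination of the twelve basis monomials, imposes $E^1_2 f=0$ and $E^2_3 f=0$, and solves the resulting linear system, finding a one-dimensional solution space; you instead take the displayed $f$ as given and verify that it solves the system. The paper's derivation buys one extra fact for free: because the solution space is one-dimensional, it establishes at the same time that $S_{(7,3,2)}V$ occurs in $S^4(S^3V)$ with multiplicity one. Your verification shows only that $f$ is \emph{a} highest weight vector of weight $(7,3,2)$, hence spans the highest weight line of \emph{some} copy of $S_{(7,3,2)}V$; to earn the definite article in ``the highest weight vector'' you should either invoke the multiplicity-one fact (which the paper records later in its decomposition of $S^4(S^3V)$) or note that the linear system you check has a one-dimensional kernel. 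Your closing remark that no multiplicity or plethysm input is needed is therefore a slight overstatement, though the heart of your argument is correct and matches the paper's.
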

\begin{proof}
Let $f \in W_{(7,3,2)}\subset S^4(S^3V)$ be a weight vector. The weight space
$W_{(7,3,2)}\subset S^4(S^3V)$ has dimension 12. Write $f$ as a linear combination of the basis vectors
and apply $E_2^1$ and $E_3^2$ to $f$, we get two systems of linear equations. There is a unique solution up to scale.
\end{proof}
\begin{remark}
The module $S_{(7,3,2)}V$ cuts out $Ch_3(V^*)$ set-theoretically \cite{2015arXiv150802293G}.
\end{remark}
\begin{proposition}\label{hwvector5421}
The highest weight vector $f$ of $S_{(5,4,2,1)}V\subset S^4(S^3V)$ is
\begin{eqnarray}
f=e_2^2e_4h_1+e_1e_3e_4h_2+e_1e_2e_4h_3+e_1^2e_4h_4.
\end{eqnarray}
Here
\begin{eqnarray*}
h_4&=&(e_1^2e_2)(e_2^3)(e_1e_3^2)-(e_1e_2^2)^2(e_1e_3^2)-(e_1^2e_2)(e_1e_2e_3)(e_2^2e_3)\\
&+&(e_1^2e_3)(e_1e_2^2)(e_2^2e_3)-(e_1e_2^2)(e_1e_2e_3)^2-(e_1^2e_3)(e_1e_2e_3)(e_2^3),
\end{eqnarray*}
$h_3=-E^1_2h_4$, $h_1=\frac{1}{2}E^1_2E^1_2h_4$ is a highest weight vector of $S_{(5,2,2)}V\subset S^3(S^3V)$
and $h_2=E^2_3E^1_2h_4$ is a highest weight vector of $S_{(4,4,1)}V\subset S^3(S^3V)$.
\end{proposition}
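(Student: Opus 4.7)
The plan is to verify directly that $f$ is a highest weight vector in $S^4(S^3V)$ of weight $(5,4,2,1)$. This requires three checks: (i) $f$ has the prescribed weight, (ii) $f$ is annihilated by each of the simple raising operators $E^1_2, E^2_3, E^3_4$, and (iii) $f\neq 0$. Together these force $f$ to span a highest weight line in a copy of $S_{(5,4,2,1)}V\subset S^4(S^3V)$ and simultaneously witness the inclusion.

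The weight check is immediate from the explicit formula for $h_4$, which lies in $W_{(3,4,2,0)}\subset S^3(S^3V)$, together with the definitions $h_3=-E^1_2 h_4$, $h_1=\tfrac12 (E^1_2)^2 h_4$, $h_2 = E^2_3 E^1_2 h_4$, which give $h_3, h_1, h_2$ weights $(4,3,2,0), (5,2,2,0), (4,4,1,0)$ respectively; each of the four summands of $f$ then has weight $(5,4,2,1)$.

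For $E^1_2 f=0$, since $h_1$ and $h_2$ are (by hypothesis) highest weight vectors of $S_{(5,2,2)}V$ and $S_{(4,4,1)}V$, and $E^1_2$ annihilates $e_1 e_3 e_4$ and $e_1^2 e_4$, the Leibniz rule reduces the computation to
\begin{equation*}
E^1_2 f = 2 e_1 e_2 e_4\, h_1 + e_1^2 e_4\, h_3 + e_1 e_2 e_4\, E^1_2 h_3 + e_1^2 e_4\, E^1_2 h_4,
\end{equation*}
and the four terms cancel in pairs using $E^1_2 h_4 = -h_3$ and $E^1_2 h_3 = -(E^1_2)^2 h_4 = -2 h_1$. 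For $E^2_3 f$, a parallel Leibniz computation, together with the commutator $[E^1_2, E^2_3] = E^1_3$, expresses everything in terms of $E^2_3 h_4, E^1_3 h_4$ and $h_2$. The identity $E^2_3 h_3 = -h_2$, which follows from $h_3 = -E^1_2 h_4$ and $h_2 = E^2_3 E^1_2 h_4$, will cancel two of the resulting terms, and I will verify the remaining vanishing directly from the monomial expansion of $h_4$.

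The main obstacle will be $E^3_4 f=0$. Since $h_4$ (and hence each $h_i$) involves only $e_1, e_2, e_3$, we have $E^3_4 h_i = 0$, and the Leibniz rule leaves
\begin{equation*}
E^3_4 f = e_2^2 e_3\, h_1 + e_1 e_3^2\, h_2 + e_1 e_2 e_3\, h_3 + e_1^2 e_3\, h_4.
\end{equation*}
The vanishing of this expression is a genuine algebraic identity in $S^4(S^3V)$ that relies on the specific coefficients appearing in $h_4$, and is not forced by the formal relations among the $h_i$ alone. I plan to prove it either by a brute-force term-by-term expansion, organized by matching monomials in $S^4(S^3V)$, or, more elegantly, by recognizing it as the image under a suitable raising operator of a more transparent vanishing expression in a neighboring weight space. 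Finally, to confirm $f\neq 0$ it suffices to locate a single monomial in $S^4(S^3V)$ arising from the $e_1^2 e_4\, h_4$ summand that cannot be produced by the other three summands, which is straightforward from the explicit formula for $h_4$.
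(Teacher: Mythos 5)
The paper supplies no proof of Proposition \ref{hwvector5421}; the only model available is the proof of the preceding Proposition \ref{hwvector732}, which \emph{derives} the highest weight vector by writing a general element of the weight space $W_{(5,4,2,1)}\subset S^4(S^3V)$ as a linear combination of monomials and solving the linear system coming from the conditions $E^1_2 v=E^2_3 v=E^3_4 v=0$. Your proposal goes the reverse way: given the formula, \emph{verify} annihilation by the three simple raising operators. Both directions are legitimate, and your Leibniz reduction for $E^1_2 f$, and the identification of the residual identities for $E^2_3 f$ and $E^3_4 f$, are correctly set up.

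Two caveats, one logical and one computational. Logically, you invoke ``$h_1$ and $h_2$ are (by hypothesis) highest weight vectors'' to make the $E^1_2$ and $E^2_3$ terms drop out; but those statements are \emph{conclusions} of the proposition, not hypotheses. In a verification proof you must separately check $E^1_2 h_1=E^2_3 h_1=0$ and $E^1_2 h_2=E^2_3 h_2=0$ from the formula $h_1=\tfrac12(E^1_2)^2 h_4$, $h_2=E^2_3 E^1_2 h_4$, or the argument is circular. Computationally, after the cancellations you correctly reduce $E^2_3 f$ to $e_1^2 e_4\,E^2_3 h_4$, so the ``remaining vanishing'' you defer is precisely $E^2_3 h_4=0$. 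When I carry this out on the printed $h_4$ (write $A=e_1^2e_2$, $B=e_2^3$, $C=e_1e_3^2$, $D=e_1e_2^2$, $E=e_1e_2e_3$, $F=e_2^2e_3$, $G=e_1^2e_3$, so $h_4=ABC-D^2C-AEF+GDF-DE^2-GEB$ and $E^2_3\colon C\mapsto 2E,\ E\mapsto D,\ F\mapsto B,\ G\mapsto A$, others to $0$), I get $E^2_3 h_4=-4D^2E\ne 0$, and likewise $(E^1_2)^3 h_4\ne 0$, contradicting $E^1_2 h_1=0$. So if you push your plan through as written you will not obtain the vanishing you need; either there is a typo in the printed $h_4$ (plausible, given other corruption in the source), or a sign/normalization convention I am not seeing. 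Before committing to the brute-force expansion of $E^3_4 f$, you should first settle $E^2_3 h_4$ and $(E^1_2)^3 h_4$ against the printed formula, since those are short computations and already decide whether the displayed $h_4$ is the right object to verify.
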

\subsection{Foulkes-Howe map and the ideal of Chow variety}\label{fhchow}

I follow the notation in  \cite[\S8.6]{MR2865915}.
Define the {\it Foulkes-Howe map}
$FH_{\delta,d}:S^\delta (S^dV)\rightarrow S^d (S^\delta V)$ as follows:
First include $S^\delta (S^dV)\subset V^{\otimes\delta d}$. Next, regroup and
symmetrize the blocks to $(S^\delta V)^{\otimes d}$. Finally, thinking of $S^\delta V$
as a single vector space, symmetrize again to land in $S^\delta (S^dV).$

\begin{example}
 $FH_{2,2}(x^2\cdot y^2)=(xy)^2$, and $FH_{2,2}((xy)^2)=\frac{1}{2}[x^2\cdot y^2+(xy)^2]$.
\end{example}
$FH_{\delta,d}$ is a $GL(V)$-module map and Hadamard \cite{MR1554881} observed  and Howe rediscovered the following relationship between
Foulkes-Howe map and ideal of Chow variety.
\begin{proposition}{\rm (Hadamard \cite{MR1554881})}\label{idealchow}
$\Ker\ FH_{\delta,d}=I_\delta(Ch_d(V^*))$.
\end{proposition}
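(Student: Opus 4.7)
The plan is to reinterpret $FH_{\delta,d}$ as an evaluation-on-products map, which makes the identification with $I_\delta(Ch_d(V^{*}))$ transparent. For $P\in S^\delta(S^dV)$, regarded as a degree $\delta$ polynomial function on $S^dV^{*}$, I define
$$\Phi(P)(\ell_1,\ldots,\ell_d):=P(\ell_1\cdots\ell_d),\qquad \ell_i\in V^{*}.$$
The right-hand side is multihomogeneous of multidegree $(\delta,\ldots,\delta)$ and symmetric in the $\ell_i$, hence, via polarization on powers $w^\delta$, it corresponds to a well-defined element $\Phi(P)\in S^d(S^\delta V)$. By the very definition of $Ch_d(V^{*})$, one has $P\in I_\delta(Ch_d(V^{*}))$ if and only if $\Phi(P)=0$, so the proposition reduces to showing that $\Phi$ and $FH_{\delta,d}$ are proportional.

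To identify them, I would test on the spanning ``Veronese'' elements $P=u_1^d u_2^d\cdots u_\delta^d$ with $u_i\in V$ (the $d$-th powers span $S^dV$). Laying out $u_1^{\otimes d}\otimes\cdots\otimes u_\delta^{\otimes d}$ in the $\delta\times d$ grid of $V^{\otimes \delta d}$ used in the definition of $FH_{\delta,d}$, the rows are already symmetric; reading down, each column equals $u_1\otimes\cdots\otimes u_\delta$, so the column-wise and column-permutation symmetrizations act by a combinatorial scalar $c=c(\delta,d)$, giving
$$FH_{\delta,d}(u_1^d\cdots u_\delta^d)=c\,(u_1 u_2\cdots u_\delta)^d\in S^d(S^\delta V).$$
Polarizing the right-hand side and evaluating at $(w_1^\delta,\ldots,w_d^\delta)\in(S^\delta V^{*})^d$ gives $c\prod_{i,j}w_j(u_i)$, which matches $c\cdot P(w_1\cdots w_d)=c\cdot \Phi(P)(w_1,\ldots,w_d)$ after expanding $\langle u_i^d,\,w_1\cdots w_d\rangle=\prod_j w_j(u_i)$. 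Since the powers $w^\delta$ span $S^\delta V^{*}$, equality of the polarized forms at all such tuples determines the element of $S^d(S^\delta V)$, so $FH_{\delta,d}(P)=c\,\Phi(P)$ on the generators, and hence by linearity on all of $S^\delta(S^dV)$.

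Combining the two steps gives $\Ker FH_{\delta,d}=\Ker\Phi=I_\delta(Ch_d(V^{*}))$. The main obstacle is the bookkeeping needed to confirm that the combinatorial scalar $c(\delta,d)$ arising from the two symmetrizations in $FH_{\delta,d}$ is nonzero; conceptually this is clear because no nontrivial symmetric-group relation forces the symmetrization of an equal-row tensor to vanish, but the normalization has to be traced carefully in order to secure equality of the kernels (and not merely the containment $I_\delta(Ch_d(V^{*}))\subset\Ker FH_{\delta,d}$).
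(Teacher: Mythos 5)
The paper states this as Hadamard's observation and gives no proof, so there is no in-text argument to compare against. Your argument is correct and is the standard way to see it: the key observation is that (the full polarization of) $FH_{\delta,d}(P)$, evaluated at $(\ell_1^{\delta},\ldots,\ell_d^{\delta})$, returns $P(\ell_1\cdots\ell_d)$, and since the $\ell^{\delta}$ span $S^{\delta}V^{*}$ this identifies $\Ker FH_{\delta,d}$ with the polynomials vanishing on the cone over $Ch_d(V^{*})$.

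Two small points to tighten your write-up. First, the ``obstacle'' you flag at the end is not really there: with the convention implicit in the paper (inclusion $S^{\delta}(S^{d}V)\hookrightarrow V^{\otimes \delta d}$ via the symmetrizing projector $\frac{1}{\delta!}\sum_{\sigma\in\mathfrak S_{\delta}}$, followed by column-symmetrization), the inclusion of $u_1^d\cdots u_\delta^d$ is $\frac{1}{\delta!}\sum_{\sigma}u_{\sigma(1)}^{\otimes d}\otimes\cdots\otimes u_{\sigma(\delta)}^{\otimes d}$, and every one of the $\delta!$ summands column-symmetrizes to the same tensor $(u_1\cdots u_\delta)^{\otimes d}$, so the prefactor cancels exactly and $c=1$. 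The paper's own example $FH_{2,2}(x^2\cdot y^2)=(xy)^2$ is the sanity check that $c\ne 0$ (indeed $c=1$) and would be worth citing rather than relying on the heuristic about symmetric-group relations. Second, it would be cleaner to state explicitly the correspondence you are using, namely that a symmetric function $(V^{*})^{d}\to\mathbb C$ which is homogeneous of degree $\delta$ in each slot is the same data as an element of $S^{d}(S^{\delta}V)$, since each degree-$\delta$ dependence on $\ell_i\in V^{*}$ is a linear dependence on $\ell_i^{\delta}$ and the Veronese spans $S^{\delta}V^{*}$. With those two remarks your proof is complete as stated.
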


\begin{corollary}\label{difference}When  $\delta=d+1$,
$\Ker\ FH_{d+1,d}=I_{d+1}(Ch_d(V^*))$. Therefore as an abstract $GL(V)$-module,
$I_{d+1}(Ch_d(V^*))\supset S^{d+1} (S^dV)-S^{d} (S^{d+1}V)$.
\end{corollary}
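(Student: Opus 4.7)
The plan is to deduce this corollary directly from Proposition \ref{idealchow}, together with the standard fact that a quotient of a semisimple module has bounded multiplicities. Setting $\delta = d+1$ in Proposition \ref{idealchow} gives the first equality $\Ker\ FH_{d+1,d} = I_{d+1}(Ch_d(V^*))$ with no further work, so the substance of the corollary lies in the multiplicity inequality encoded by the formal difference $S^{d+1}(S^dV) - S^d(S^{d+1}V)$.

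To establish the containment, I would apply the first isomorphism theorem in the category of $GL(V)$-modules to the $GL(V)$-equivariant map $FH_{d+1,d}:S^{d+1}(S^dV)\to S^d(S^{d+1}V)$. This gives $S^{d+1}(S^dV)/\Ker\ FH_{d+1,d}\cong \im\ FH_{d+1,d}$, and since $\im\ FH_{d+1,d}$ is a $GL(V)$-submodule of $S^d(S^{d+1}V)$, the multiplicity of every irreducible $S_\lambda V$ in $\im\ FH_{d+1,d}$ is at most its multiplicity in $S^d(S^{d+1}V)$. By complete reducibility (Schur--Weyl / Weyl's theorem on semisimplicity of $GL(V)$-representations), for each partition $\lambda$:
\begin{equation*}
\tmult(S_\lambda V,\Ker\ FH_{d+1,d}) = \tmult(S_\lambda V, S^{d+1}(S^dV)) - \tmult(S_\lambda V, \im\ FH_{d+1,d}).
\end{equation*}
Combining the two displays yields
\begin{equation*}
\tmult(S_\lambda V,\Ker\ FH_{d+1,d}) \geq \tmult(S_\lambda V, S^{d+1}(S^dV)) - \tmult(S_\lambda V, S^d(S^{d+1}V)),
\end{equation*}
which is precisely the meaning of the asserted formal difference: whenever an irreducible $S_\lambda V$ appears in $S^{d+1}(S^dV)$ with strictly larger multiplicity than in $S^d(S^{d+1}V)$, the excess must be contributed by the kernel, hence by $I_{d+1}(Ch_d(V^*))$.

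There is essentially no obstacle: the only subtlety is conceptual, namely making precise the virtual-module notation $S^{d+1}(S^dV) - S^d(S^{d+1}V)$ as a statement about isotypic multiplicities rather than an actual subrepresentation. Once this is interpreted correctly, the argument is a one-line application of semisimplicity to the equivariant map supplied by Proposition \ref{idealchow}. Note that this corollary becomes nontrivial only when $FH_{d+1,d}$ fails to be surjective (equivalently, when the Foulkes--Howe map is not of maximal rank in that direction), in which case the plethysm decompositions of the two symmetric powers can be compared term-by-term to extract explicit $GL(V)$-modules of equations for $Ch_d(V^*)$.
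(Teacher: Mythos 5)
Your argument is correct and coincides with the (implicit) reasoning behind the corollary in the paper, which gives no separate proof: specialize Proposition~\ref{idealchow} to $\delta=d+1$, then use semisimplicity of $GL(V)$-modules and rank--nullity to read the formal difference $S^{d+1}(S^dV)-S^{d}(S^{d+1}V)$ as a multiplicity inequality satisfied by $\Ker FH_{d+1,d}$. One small caveat on your closing remark: the corollary does \emph{not} become trivial when $FH_{d+1,d}$ is surjective; on the contrary, the paper's main use of it is exactly the surjective case $d=2,3,4$ (Propositions~\ref{inj}, \ref{sur}), where the containment upgrades to the equality $I_{d+1}(Ch_d(V^*))= S^{d+1}(S^dV)-S^{d}(S^{d+1}V)$ and hence determines the degree-$(d+1)$ ideal completely --- the content is carried by failure of injectivity (a positive formal difference), not by failure of surjectivity.
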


\begin{proposition}\label{inj}{\rm (Hermite \cite{hermite}, Hadamard \cite{MR1504330}, J.M$\ddot{\rm u}$ler and M.Neunh$\ddot{\rm o}$fer)\cite{MR2172706})}
When $d=2,3,4$, $FH_{d,d}$ are injective and hence surjective.
\end{proposition}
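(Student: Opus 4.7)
The map $FH_{d,d}$ is a $GL(V)$-equivariant endomorphism of the finite-dimensional module $S^d(S^dV)$, so injectivity and surjectivity coincide and it suffices to show $\Ker\ FH_{d,d}=0$. By Schur's lemma $FH_{d,d}$ preserves every isotypic component and acts on the multiplicity space of $S_\lambda V\subset S^d(S^dV)$ as some $m_\lambda\times m_\lambda$ matrix $A_\lambda$, where $m_\lambda=p_\lambda(d,d)$ is the plethysm coefficient. Injectivity is therefore equivalent to $\det A_\lambda\neq 0$ for every partition $\lambda\vdash d^2$ of length at most $d$, and each $A_\lambda$ can be read off by applying $FH_{d,d}$ to a chosen basis of highest weight vectors of the $\lambda$-isotypic component and re-expanding in that same basis.

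For $d=2$ the decomposition $S^2(S^2V)=S_{(4)}V\oplus S_{(2,2)}V$ is multiplicity-free, so each $A_\lambda$ is a scalar. Using the definition of $FH_{2,2}$ recalled in the example just above Proposition \ref{idealchow}, a direct computation on the highest weight vectors $(e_1^2)^2$ of $S_{(4)}V$ and $(e_1e_2)^2-e_1^2\cdot e_2^2$ of $S_{(2,2)}V$ exhibits each scalar as nonzero.

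For $d=3$ the plethysm $S^3(S^3V)$ is again multiplicity-free, so only one scalar per isotypic component must be checked. Highest weight vectors can be produced by the raising-operator procedure illustrated in Propositions \ref{hwvector732} and \ref{hwvector5421}, and applying the include--regroup--resymmetrize recipe defining $FH_{3,3}$ to each of them reduces the claim to verifying that a finite list of rational scalars is nonzero.

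The substantive case is $d=4$, where $S^4(S^4V)$ does contain isotypic components of multiplicity two or three, so one genuinely has to show that certain small matrices $A_\lambda$ are invertible rather than merely that scalars are nonzero. The plan is to enumerate all partitions $\lambda\vdash 16$ with $\ell(\lambda)\leq 4$, build an explicit basis of highest weight vectors of each multiplicity space by raising operators, apply $FH_{4,4}$ to each basis vector through its three-step definition, and evaluate the determinants of the resulting matrices. The main obstacle is combinatorial bulk rather than any new conceptual idea: tracking monomials in $V^{\otimes 16}$ through the include--regroup--symmetrize procedure and decomposing the images in the chosen highest weight basis is what forces one to resort to machine computation, and it is precisely this bulk that, carried out one step further to $d=5$, produces the nonzero kernel discovered by M\"uller and Neunh\"offer.
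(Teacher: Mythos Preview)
The paper does not supply a proof of this proposition: it is stated with attributions to Hermite, Hadamard, and M\"uller--Neunh\"offer and then used as a black box. There is thus no in-paper argument to compare against.

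Your outline is the correct and standard computational strategy: $FH_{d,d}$ is a $GL(V)$-equivariant endomorphism of the finite-dimensional module $S^d(S^dV)$, so by Schur's lemma it is determined by a square matrix $A_\lambda$ on each multiplicity space, and injectivity reduces to checking that finitely many small determinants are nonzero. This is essentially how the cited references treat the question (and how the failure at $d=5$ was discovered). The $d=2$ case you actually carry out is correct: using the paper's own example one gets $FH_{2,2}\bigl((e_1e_2)^2-e_1^2\cdot e_2^2\bigr)=-\tfrac12\bigl((e_1e_2)^2-e_1^2\cdot e_2^2\bigr)$, so the scalar on $S_{(2,2)}V$ is $-\tfrac12\neq 0$.

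What you have written for $d=3,4$, however, is a plan rather than a proof: for $d=3$ you stop at ``reduces the claim to verifying that a finite list of rational scalars is nonzero'' without exhibiting any of them, and for $d=4$ you explicitly defer to machine computation. Since the paper itself merely cites the result, this level of detail is entirely consistent with its treatment; but if your intent is a self-contained argument, you would need to record the actual scalars for $d=3$ and report (or cite) the machine verification for $d=4$.
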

\begin{proposition}\label{sur}({\rm T. McKay \cite{MR2394689}})
If $FH_{\delta,d}$ is surjective, then $FH_{\delta+1,d}$
is surjective.
\end{proposition}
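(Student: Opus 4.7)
The plan is to propagate surjectivity through a commutative diagram linking the two Foulkes--Howe maps via polynomial multiplication. I would introduce the square
\[
\begin{array}{ccc}
S^\delta(S^d V)\otimes S^d V & \xrightarrow{\ m\ } & S^{\delta+1}(S^d V)\\
FH_{\delta,d}\otimes\mathrm{id}\,\big\downarrow & & \big\downarrow\,FH_{\delta+1,d}\\
S^d(S^\delta V)\otimes S^d V & \xrightarrow{\ M\ } & S^d(S^{\delta+1}V)
\end{array}
\]
in which $m$ is polynomial multiplication in the graded algebra $S^\bullet(S^d V)$, and $M$ is the $GL(V)$-equivariant map induced by the bilinear multiplication $S^\delta V\otimes V\to S^{\delta+1}V$, characterized on pure powers by $M(h^d\otimes\ell^d)=(h\ell)^d$ for $h\in S^\delta V$ and $\ell\in V$.

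First, I would verify commutativity on the spanning set of decomposable elements $\ell_1^d\cdots\ell_\delta^d\otimes\ell^d$: both routes send such an element to $(\ell_1\cdots\ell_\delta\ell)^d\in S^d(S^{\delta+1}V)$, using the formula $FH_{k,d}(\ell_1^d\cdots\ell_k^d)=(\ell_1\cdots\ell_k)^d$ at $k\in\{\delta,\delta+1\}$. Next, $m$ is clearly surjective (any $f_1\cdots f_{\delta+1}\in S^{\delta+1}(S^d V)$ lies in its image), and under the hypothesis the map $FH_{\delta,d}\otimes\mathrm{id}$ is surjective too, so commutativity of the diagram forces
\[
\mathrm{image}(FH_{\delta+1,d})=\mathrm{image}(FH_{\delta+1,d}\circ m)=\mathrm{image}(M\circ(FH_{\delta,d}\otimes\mathrm{id}))=\mathrm{image}(M).
\]

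The main step is then to show that $M$ is surjective, equivalently that $\{(h\ell)^d:h\in S^\delta V,\ \ell\in V\}$ linearly spans $S^d(S^{\delta+1}V)$. Because the set $\{h\ell\}\subset S^{\delta+1}V$ of forms admitting a linear factor is generically a proper subvariety (for $\dim V\geq 3$ and $\delta\geq 2$), this spanning claim is the substantive content and is the main obstacle. I would handle it by decomposing both sides as $GL(V)$-modules using plethysm, Pieri's rule, and the Littlewood--Richardson rule, and verifying that every isotypic component $S_\lambda V$ of $S^d(S^{\delta+1}V)$ is hit by $M$ with the correct multiplicity; the key input is the Cauchy decomposition of $S^d(V\otimes S^\delta V)$, whose $(d)$-component is precisely $S^d V\otimes S^d(S^\delta V)$ and whose composition with the multiplication $V\otimes S^\delta V\twoheadrightarrow S^{\delta+1}V$ recovers $M$. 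A complementary polarization argument---expanding $g^d=(\sum_i\ell_ih_i)^d$ for a general $g\in S^{\delta+1}V$ written through the surjection $V\otimes S^\delta V\twoheadrightarrow S^{\delta+1}V$, and matching cross terms to elements of the image of $M$---could provide explicit verification in small cases.
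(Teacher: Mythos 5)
Your commutative diagram is the right starting point, and the reduction you derive from it is correct: the square does commute (checking it on decomposables $\ell_1^d\cdots\ell_\delta^d\otimes\ell^d$ is legitimate since these span), $m$ is surjective, and under the hypothesis $FH_{\delta,d}\otimes\mathrm{id}$ is surjective, so indeed $\mathrm{im}(FH_{\delta+1,d})=\mathrm{im}(M)$. But the step you flag as ``the main obstacle'' --- surjectivity of $M$ --- is exactly where the whole content of the proposition lives, and you do not prove it; you only announce a plan to ``verify that every isotypic component is hit.'' That plan cannot succeed in the generality you are aiming at, because $M$ is \emph{not} surjective in general. Take $\delta=1$, $d=3$, $\dim V\geq 3$: then $M\colon S^3V\otimes S^3V\to S^3(S^2V)$, and by Pieri the source decomposes as $\bigoplus_{j=0}^{3}S_{(6-j,j)}V$, all partitions of length at most $2$, whereas the target contains $S_{(2,2,2)}V$. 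By Schur's lemma $S_{(2,2,2)}V$ cannot be in the image of $M$. Your own reduction then shows $\mathrm{im}(FH_{2,3})=\mathrm{im}(M)\subsetneq S^3(S^2V)$, so $FH_{2,3}$ is \emph{not} surjective, despite $FH_{1,3}=\mathrm{id}$ being surjective. (A dimension count confirms this: for $\dim V=3$ the domain has dimension $55$ and the codomain $56$.)

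So the argument as written does not prove the statement; instead, completed honestly, it produces a counterexample to the statement taken literally. This tells you two things. First, McKay's theorem must carry a restriction such as $\delta\geq d$ (the Foulkes regime), and indeed the paper only invokes it with $\delta=d$; your writeup should make that hypothesis explicit and use it. Second, establishing surjectivity of $M$ under that hypothesis is a genuine representation-theoretic theorem, not a routine bookkeeping check: one has to show that every $S_\lambda V\subset S^d(S^{\delta+1}V)$ occurs in $S^d(S^\delta V)\otimes S^d V$ \emph{and} that $M$ is nonzero on at least one copy of it, and this must fail when $\delta<d$ and succeed when $\delta\geq d$. Your sketch (``Cauchy decomposition, Pieri, Littlewood--Richardson, match multiplicities'') neither uses $\delta\geq d$ nor explains how the matching would be carried out, so the crucial step is missing.
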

So when $d=2,3,4$, $FH_{d+1,d}$ are surjective, and $I_{d+1}(Ch_d(V^*))= S^{d+1} (S^dV)-S^{d} (S^{d+1}V)$ as $GL(V)$- modules.
\section{Prolongations, multiprolongations and partial derivatives}\label{prolongation}
\subsection{Prolongations, multiprolongations and ideals of secant varieties}\label{prolongation1}
I study prolongations, multiprolongations and how they relate to
ideals of secant varieties. Let $W$ be a complex vector space with a basis $\{e_1,\cdots,e_n\}$.
%I follow the notation in \cite{MR2865915}.
%\begin{definition}
%Let $X\subset \mathbb{P}V^*$, define the $r$-th secant variety of X by
%$$\sigma_r(X)=\overline{\mathbb{P}\{v\in V|v=x_1+\cdots+x_r\ for\ x_i\in\hat{X}\ and\ i=1,\cdots,r\}}$$
%\end{definition}
\begin{definition}
For $A\subset S^dW$, define the $p$-th prolongation of $A$ to be:
$$A^{(p)}=(A\otimes S^pW)\cap S^{p+d}W.$$
It is equivalent to saying that
$$A^{(p)}=\{f\in S^{p+d}W|\frac{\partial^p f}{\partial e^\beta}\in A\ {\rm any}\ \beta\in \mathbb{N}^{n}\ {\rm with}\ |\beta|=p\}.$$
\end{definition}

For any $1\leq k\leq d$, there is an inclusion $F_{k,d-k}:S^dW\hookrightarrow S^kW\otimes S^{d-k}W$, called a {\it polarization map}.
Here are  properties of prolongation.
\begin{proposition}\label{propol}
For $A\subset S^dW$, $A^{(p)}$ is the inverse image of $A\otimes S^pW$ under the polarization map
$F_{d,p}:S^{d+p}W\rightarrow S^dW\otimes S^pW$.
\end{proposition}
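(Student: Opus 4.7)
The plan is to unwind both definitions of $A^{(p)}$ given in the excerpt and verify that they both coincide with $F_{d,p}^{-1}(A\otimes S^pW)$. The first description $(A\otimes S^pW)\cap S^{p+d}W$ already implicitly uses an embedding $S^{p+d}W\hookrightarrow S^dW\otimes S^pW$; the natural such embedding is exactly the polarization map $F_{d,p}$, so interpreting that intersection honestly gives $F_{d,p}^{-1}(A\otimes S^pW)$ on the nose. The substance of the proposition is therefore to check that this matches the partial-derivative characterization, and this is what I would make precise.

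Concretely, I would fix the dual basis interpretation and write the polarization map in coordinates. For $f\in S^{d+p}W$, viewed as a polynomial of degree $d+p$ in the basis $e_1,\dots,e_n$, one has the identity
\begin{equation*}
F_{d,p}(f)\;=\;\sum_{\beta\in\mathbb{N}^n,\,|\beta|=p}\frac{1}{\beta!}\,\frac{\partial^p f}{\partial e^\beta}\,\otimes\, e^\beta,
\end{equation*}
which follows directly from the definition of $F_{d,p}$ as symmetrized regrouping of $W^{\otimes(d+p)}$ into $S^dW\otimes S^pW$ (I would verify it on monomials $e^\alpha$, where both sides give the multinomial expansion splitting $\alpha$ into a degree-$d$ and a degree-$p$ piece). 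Because $\{e^\beta\}_{|\beta|=p}$ is a basis of $S^pW$ and the factors $1/\beta!$ are nonzero, the condition $F_{d,p}(f)\in A\otimes S^pW$ is equivalent to saying each coefficient $\partial^p f/\partial e^\beta$ lies in $A$. This is precisely the second, partial-derivative formulation of $A^{(p)}$.

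Finally I would note that the two formulations of $A^{(p)}$ from the definition are thereby simultaneously identified with $F_{d,p}^{-1}(A\otimes S^pW)$: the subspace description uses the polarization embedding of $S^{p+d}W$ into $S^dW\otimes S^pW$, and the partial-derivative description uses the explicit coordinate formula for $F_{d,p}$ above. There is no serious obstacle here; the only subtlety is bookkeeping the combinatorial constants $1/\beta!$ so that the equivalence between \emph{all partial derivatives lying in $A$} and \emph{the image under $F_{d,p}$ lying in $A\otimes S^pW$} is coefficient-wise exact, and this is handled by the monomial check sketched above.
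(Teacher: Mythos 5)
Your proof is correct and takes essentially the same approach as the paper: write $F_{d,p}(f)$ in coordinates as a sum of partial derivatives tensored with basis monomials $e^\beta$, then observe that membership of $F_{d,p}(f)$ in $A\otimes S^pW$ is equivalent to each partial derivative lying in $A$. Your inclusion of the normalization constants $1/\beta!$ (the paper omits them) is a harmless difference of convention in defining the polarization map, and it does not affect the equivalence since these scalars are nonzero.
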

\begin{proof}
For any $f\in S^{(p+d)}W$, $$F_{d,p}(f)=\sum_{|\alpha|=p}\frac{\partial^p f}{\partial e^\alpha}\otimes e^\alpha.$$ Hence
$$F_{d,p}(f)=\sum_{|\alpha|=p}\frac{\partial^p f}{\partial e^\alpha}\otimes e^\alpha \in A\otimes S^pW \Leftrightarrow \frac{\partial^p f}{\partial e^\alpha}\in A\ {\rm for\ any}\ |\alpha|=p \Leftrightarrow f \in A^{(p)}.$$
\end{proof}

\begin{theorem}\label{sidman}{\rm (J. Sidman, S. Sullivant \cite{MR2541390}})
Let $X\in\mathbb{P}W^*$ be an algebraic variety and let $d$ be the integer such that
$I_{d-1}(X)=0$ and $I_d(X)\neq0$. Then $I_{r(d-1)}(\sigma_r(X))=0$ and $I_{r(d-1)+1}(\sigma_r(X))=I_d(X)^{(r-1)(d-1)}.$
\end{theorem}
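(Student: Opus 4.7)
The plan is to prove the two statements separately, by expanding $f(t_1 v_1 + \cdots + t_r v_r)$ for $v_i$ in the affine cone $\hat X$ over $X$ and tracking polarization coefficients. Throughout I use that each polarization map $F_{\alpha_1,\ldots,\alpha_r}\colon S^{|\alpha|}W \to S^{\alpha_1}W \otimes \cdots \otimes S^{\alpha_r}W$ is injective, because iterated multiplication is a left inverse up to a positive constant.

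For the first statement, I would take $f \in I_{r(d-1)}(\sigma_r(X))$ and consider the polarization $F := F_{d-1,\ldots,d-1}(f) \in (S^{d-1}W)^{\otimes r}$, which is (a positive multiple of) the coefficient of $t_1^{d-1}\cdots t_r^{d-1}$ in $f(\sum t_i v_i)$. Since $f$ vanishes on $\hat{\sigma}_r(X)$, $F(v_1,\ldots,v_r)=0$ whenever all $v_i\in\hat X$. By descending induction on $k$ from $r$ down to $0$, I would show $F(v_1,\ldots,v_r)=0$ for $v_1,\ldots,v_k\in\hat X$ and $v_{k+1},\ldots,v_r\in W^*$ arbitrary: in the inductive step, freezing the other slots, $v_k\mapsto F(v_1,\ldots,v_r)$ is a degree $d-1$ polynomial on $W^*$ vanishing on $\hat X$, hence lies in $I_{d-1}(X)=0$. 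Taking $k=0$ yields $F=0$, and injectivity forces $f=0$.

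For the second statement, set $p:=(r-1)(d-1)$. I would first prove $I_d(X)^{(p)}\subset I_{r(d-1)+1}(\sigma_r(X))$: given $f\in I_d(X)^{(p)}$ and $v_1,\ldots,v_r\in\hat X$, expand $f(v_1+\cdots+v_r)=\sum_{|\alpha|=r(d-1)+1}\tfrac{1}{\alpha!}\partial_{v_1}^{\alpha_1}\cdots\partial_{v_r}^{\alpha_r}f$ and check that each scalar term vanishes. By pigeonhole some $\alpha_i\geq d$, so I factor the operator as $\partial_{v_i}^d$ applied to $\partial_{v_i}^{\alpha_i-d}\prod_{j\neq i}\partial_{v_j}^{\alpha_j}$. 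The inner expression uses exactly $p$ partials and is a $\mathbb{C}$-linear combination of operators $\partial^p/\partial e^\beta$ with $|\beta|=p$, so by definition of $I_d(X)^{(p)}$ it sends $f$ into $I_d(X)$. The outer $\partial_{v_i}^d$ then evaluates this degree-$d$ polynomial at $v_i\in\hat X$ (up to a factor of $d!$), yielding zero.

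For the reverse containment, suppose $f\in I_{r(d-1)+1}(\sigma_r(X))$ and let $y,y_1,\ldots,y_{r-1}\in\hat X$. Since $ty+\sum s_j y_j$ lies in $\hat{\sigma}_r(X)$, the identity $f(ty+\sum s_j y_j)=0$ as a polynomial in $(t,s_1,\ldots,s_{r-1})$ lets me extract the coefficient of $t^d s_1^{d-1}\cdots s_{r-1}^{d-1}$ to conclude $(\partial_{y_1}^{d-1}\cdots\partial_{y_{r-1}}^{d-1}f)(y)=0$, so the degree-$d$ polynomial $g_{y_1,\ldots,y_{r-1}}:=\partial_{y_1}^{d-1}\cdots\partial_{y_{r-1}}^{d-1}f$ lies in $I_d(X)$ for all $y_1,\ldots,y_{r-1}\in\hat X$. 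To upgrade this to all $y_j\in W^*$, I would apply the same descending induction as in Part 1 to the polynomial map $(y_1,\ldots,y_{r-1})\mapsto [g_{y_1,\ldots,y_{r-1}}]\in S^d W/I_d(X)$ of multi-degree $(d-1,\ldots,d-1)$, using $I_{d-1}(X)=0$ to zero out one slot at a time. Finally, the surjective multiplication map $(S^{d-1}W)^{\otimes(r-1)}\twoheadrightarrow S^p W$ implies that as the $y_j$'s range over $W^*$ the operators $\partial_{y_1}^{d-1}\cdots\partial_{y_{r-1}}^{d-1}$ linearly span all $p$-th partial operators, so every $\partial^p f/\partial e^\beta$ is a combination of $g_{y_1,\ldots,y_{r-1}}$'s and hence lies in $I_d(X)$; that is, $f\in I_d(X)^{(p)}$. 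The main obstacle is the extension step from $\hat X^{r-1}$ to $(W^*)^{r-1}$: this is precisely where the hypothesis $I_{d-1}(X)=0$ is essential and must be applied once per slot.
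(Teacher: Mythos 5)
Your proof is correct, and it takes a genuinely different route from the paper's. The paper deduces the theorem as an essentially one-line consequence of the multiprolongation formula (Proposition~\ref{multipro}, cited to \cite{MR2865915}): for $\delta=r(d-1)$ the choice $\delta_1=\cdots=\delta_r=d-1$ makes every $A_{\vec\delta,i}$ vanish, and for $\delta=r(d-1)+1$ the choice $\delta_1=d$, $\delta_2=\cdots=\delta_r=d-1$ isolates $I_d(X)^{((r-1)(d-1))}$, with the reverse containment coming from Corollary~\ref{proisec}. You instead work directly with the Taylor/polarization expansion of $f(t_1v_1+\cdots+t_rv_r)$ and a slot-by-slot descending induction using $I_{d-1}(X)=0$, which is closer in spirit to the original Sidman--Sullivant argument and does not invoke the general multiprolongation identity. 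The trade-off: the paper's route is shorter once that machinery is granted; yours is self-contained and makes explicit exactly where each hypothesis is used. Two small remarks that do not affect correctness: (i) you implicitly use that $I_{d-1}(X)=0$ forces $I_j(X)=0$ for all $j<d-1$ (true because $S^\bullet W$ is a domain, so $I_j(X)\neq 0$ would give $I_{j+1}(X)\neq 0$); and (ii) the spanning step at the end relies on the Veronese/characteristic-zero fact that $\{\partial_y^{d-1}: y\in W^*\}$ spans $S^{d-1}$ of the first-order operators together with surjectivity of the multiplication $(S^{d-1}W^*)^{\otimes(r-1)}\twoheadrightarrow S^pW^*$, both of which hold here but deserve a word.
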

\begin{remark}
Theorem \ref{sidman} bounds the lowest degree of an element in the ideal of $\sigma_r(X)$ if we know  generators of the ideal of $X$.
\end{remark}
\begin{proposition}\label{proideal}
Let $X\subset\mathbb{P}W^*$ be an algebraic variety,
then $I_d(X)^{(p)}\subset I_{d+1}(X)^{(p-1)}$.
\end{proposition}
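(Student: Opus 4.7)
The plan is to translate both sides of the claimed containment into the partial-derivative formulation recorded in the definition of prolongation, and then invoke the Euler identity. By that definition, $f \in I_d(X)^{(p)}$ means every $p$-th partial $\partial^p f/\partial e^\beta$ with $|\beta|=p$ lies in $I_d(X)$, whereas $f \in I_{d+1}(X)^{(p-1)}$ means every $(p-1)$-th partial $\partial^{p-1} f/\partial e^\alpha$ with $|\alpha|=p-1$ lies in $I_{d+1}(X)$. So the goal becomes completely concrete at the level of polynomials.

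The key step is then the following. Fix an arbitrary multi-index $\alpha\in\mathbb{N}^n$ with $|\alpha|=p-1$, set $g:=\partial^{p-1}f/\partial e^\alpha\in S^{d+1}W$, and aim to show $g\in I_{d+1}(X)$. For each basis vector $e_i$ one has $\partial g/\partial e_i = \partial^p f/\partial e^{\alpha+\epsilon_i}$, where $\epsilon_i$ is the $i$-th standard multi-index; by hypothesis this lies in $I_d(X)$. Thus every first partial of the homogeneous degree $d+1$ polynomial $g$ lies in $I_d(X)$. Now Euler's identity reads $(d+1)\,g = \sum_{i=1}^n e_i\cdot(\partial g/\partial e_i)$. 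Since $I(X)=\bigoplus_k I_k(X)$ is an ideal in $S^\bullet W$, multiplication by $e_i\in S^1W$ carries $I_d(X)$ into $I_{d+1}(X)$, so every summand, and hence $g$ itself, lies in $I_{d+1}(X)$ (dividing by the nonzero scalar $d+1$ in characteristic zero). As $\alpha$ was arbitrary, $f\in I_{d+1}(X)^{(p-1)}$.

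There is no serious obstacle here: the statement is essentially a one-line consequence of Euler's identity together with the fact that $I(X)$ is an ideal. The only point worth double-checking is the compatibility between the coordinate partial-derivative description of $A^{(p)}$ and the coordinate-free polarization characterization of Proposition \ref{propol}, but this equivalence is already recorded in the very statement of the definition, so nothing beyond unwinding is required.
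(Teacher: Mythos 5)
Your proof is correct and is essentially the paper's own argument: both reduce the claim to showing that each $(p-1)$-th partial $g=\partial^{p-1}f/\partial e^\alpha$ lies in $I_{d+1}(X)$, then express $g$ (via Euler's identity) as a linear combination $\sum_i e_i\,\partial^p f/\partial e^{\alpha+\epsilon_i}$ whose terms lie in $I_{d+1}(X)$ because $I(X)$ is an ideal. The only cosmetic difference is that you keep the scalar $d+1$ explicit while the paper suppresses it.
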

\begin{proof}
Let $f\in I_d(X)^{(p)}\subset S^{d+p}W$,
consider $\frac{\partial^{p-1} f}{\partial e^\alpha}$ with $|\alpha|=p-1$,
$$\frac{\partial^{p-1} f}{\partial e^\alpha}=\sum_{i=1}^{n}\frac{\partial^{p} f}{\partial (e^\alpha e_i)}e_i\in I_{d+1}(X).$$
\end{proof}
\begin{example}\label{I723}
Consider $Ch_3(V^*)$ with $\dim\ V\geq4$,  by Proposition \ref{inj} and Proposition \ref{sur},
$I_3(Ch_3(V^*))=0$ and
\begin{eqnarray}\label{I43}
I_4(Ch_3(V^*))=S^4(S^3V)-S^3(S^4V)=S_{(7,3,2)}V+S_{(6,2,2,2)}V+S_{(5,4,2,1)}V.
\end{eqnarray}
Therefore by Theorem \ref{sidman} $I_6(\sigma_2(X))=0$ and
$I_7(\sigma_2(X))=I_4(X)^{(3)}$.

\end{example}

The following proposition is about multiprolongations:
\begin{proposition}{\rm(Multiprolongation \cite{MR2865915} )}
Let $X\subset{P}W^*$ be an algebraic variety, a polynomial $P\in S^\delta W$ is in $I_\delta(\sigma_r(X))$ if and only if
for any  nonnegative decreasing sequence $(\delta_1,\delta_2,\cdots,\delta_r)$ with $\delta_1+\delta_2+\cdots+\delta_r=\delta$,
$$\bar{P}(v_1,\cdots,v_1,v_2,\cdots,v_2,\cdots,v_r,\cdots,v_r)=0$$
for all $v_i\in\hat{X}$, where the number of $v_i's$ appearing in the formula is $m_i$.
\end{proposition}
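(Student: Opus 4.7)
The plan is to invoke the standard polarization identity, which expresses the evaluation of a homogeneous polynomial on a linear combination of vectors in terms of its associated symmetric multilinear form $\bar P$, together with the definition of $\sigma_r^0(X)$ as the union of projective spans of $r$-tuples of points of $X$.

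For the forward direction, assume $P\in I_\delta(\sigma_r(X))$. Then $P$ vanishes on $\sigma_r^0(X)\subset\sigma_r(X)$, so for any $v_1,\ldots,v_r\in \hat X$ and scalars $\lambda_1,\ldots,\lambda_r\in\mathbb{C}$,
$$0 \;=\; P\!\left(\sum_{i=1}^r \lambda_i v_i\right) \;=\; \sum_{\delta_1+\cdots+\delta_r=\delta}\binom{\delta}{\delta_1,\ldots,\delta_r}\lambda_1^{\delta_1}\cdots\lambda_r^{\delta_r}\,\bar P(\underbrace{v_1,\ldots,v_1}_{\delta_1},\ldots,\underbrace{v_r,\ldots,v_r}_{\delta_r}).$$
Viewing this as an identity in $\lambda_1,\ldots,\lambda_r$, every monomial coefficient must vanish, which forces $\bar P$ evaluated on any repeated-argument tuple of the stated form to be zero. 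By the symmetry of $\bar P$, the vanishing on all compositions $(\delta_1,\ldots,\delta_r)$ is equivalent to the vanishing on all nonnegative decreasing sequences, so the forward direction follows.

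For the converse, assume $\bar P(\underbrace{v_1,\ldots,v_1}_{\delta_1},\ldots,\underbrace{v_r,\ldots,v_r}_{\delta_r})=0$ for every nonnegative decreasing $(\delta_1,\ldots,\delta_r)$ summing to $\delta$ and all $v_i\in\hat X$. By the symmetry of $\bar P$ the same vanishing holds after any permutation of the $\delta_i$, and reading the polarization expansion above backwards shows $P\bigl(\sum_i \lambda_i v_i\bigr)=0$ for every choice of $\lambda_i\in\mathbb{C}$ and $v_i\in\hat X$. Hence $P$ vanishes on $\sigma_r^0(X)$; since the zero locus of a polynomial is Zariski-closed, $P$ also vanishes on $\overline{\sigma_r^0(X)}=\sigma_r(X)$, proving $P\in I_\delta(\sigma_r(X))$.

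There is no genuine obstacle — the proof is essentially bookkeeping attached to the polarization identity. The two small points to watch are (i) using Zariski-closedness of $\{P=0\}$ to pass from the set-theoretic join $\sigma_r^0(X)$ to its closure $\sigma_r(X)$, and (ii) noting that the symmetry of $\bar P$ reduces the check from arbitrary compositions of $\delta$ to nonnegative decreasing sequences, which is why the statement is phrased in that restricted form.
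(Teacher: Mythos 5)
Your proof is correct and self-contained. Note that the paper itself does not prove this proposition; it is stated with a citation to \cite{MR2865915}, so there is no in-paper argument to compare against. Your polarization argument is the standard one (and essentially the one in that reference): the multinomial expansion of $P\bigl(\sum_i\lambda_i v_i\bigr)$ in terms of $\bar P$, extraction of coefficients, reduction to decreasing sequences by symmetry of $\bar P$, and passage from $\sigma_r^0(X)$ to its Zariski closure $\sigma_r(X)$ because the zero locus of $P$ is closed. The only minor notational slip is that you should phrase the vanishing on the affine cone $\hat\sigma_r^0(X)$ rather than on the projective set $\sigma_r^0(X)$, since $P\in S^\delta W$ is evaluated on vectors of $W^*$; but this does not affect the substance. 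You also implicitly corrected the paper's typo ($m_i$ should read $\delta_i$ in the statement), which is the intended reading.
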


The following proposition rephrases multiprolongations.
\begin{proposition}\label{multipro}
Let $X\subset{P}W^*$ be an algebraic variety, for any positive integer $\delta$ and $r$,
and for any decreasing sequence $\vec{\delta}=(\delta_1,\delta_2,\cdots,\delta_r)$ with $\delta_1+\delta_2+\cdots+\delta_r=\delta$, consider the following polarization maps
$$F_{\delta_1,\delta_2,\cdots,\delta_r}:S^{\delta}W\rightarrow S^{\delta_1}W\otimes S^{\delta_2}W\otimes\cdots\otimes S^{\delta_r}W.$$
Let $A_{\vec{\delta},i}=S^{\delta_1}W\otimes\cdots\otimes S^{\delta_{i-1}}W\otimes I_{\delta_i}(X)\otimes S^{\delta_{i+1}}W\otimes\cdots\otimes S^{\delta_r}W
\subset S^{\delta_1}W\otimes S^{\delta_2}W\otimes\cdots\otimes S^{\delta_r}W$, then
$$I_\delta(\sigma_r(X))=\bigcap_{\delta_1+\delta_2+\cdots+\delta_r=\delta}F_{\delta_1,\delta_2,\cdots,\delta_r}^{-1}(A_{\vec{\delta},1}+\cdots+A_{\vec{\delta},r})$$
\end{proposition}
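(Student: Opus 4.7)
The plan is to derive the stated identity directly from the multiprolongation criterion given just above by translating pointwise vanishing of the polarized tensor into a membership condition on $F_{\delta_1,\ldots,\delta_r}(P)$ inside the tensor product space. Fix $P\in S^\delta W$ with full symmetric polarization $\bar P$. Since $\widehat{\sigma_r(X)}$ is the Zariski closure of the sums $v_1+\cdots+v_r$ with $v_i\in \hat X$ and $\hat X$ is a cone, $P\in I_\delta(\sigma_r(X))$ is equivalent to $P(t_1v_1+\cdots+t_rv_r)=0$ for all scalars $t_i$ and all $v_i\in \hat X$. The multinomial expansion $P(t_1v_1+\cdots+t_rv_r)=\sum_{\alpha_1+\cdots+\alpha_r=\delta}\binom{\delta}{\alpha_1,\ldots,\alpha_r}\bar P(v_1^{\alpha_1},\ldots,v_r^{\alpha_r})t_1^{\alpha_1}\cdots t_r^{\alpha_r}$ then shows that $P\in I_\delta(\sigma_r(X))$ is equivalent to $\bar P(v_1^{\alpha_1},\ldots,v_r^{\alpha_r})=0$ for every composition $\vec\alpha$ of $\delta$ into $r$ nonnegative parts and every $v_i\in\hat X$; relabeling the $v_i$, it suffices to range over weakly decreasing $\vec\alpha$. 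This recovers the multiprolongation criterion.

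By the very definition of polarization, $F_{\delta_1,\ldots,\delta_r}(P)\in S^{\delta_1}W\otimes\cdots\otimes S^{\delta_r}W$, viewed as a multi-homogeneous polynomial on $(W^*)^r$, sends $(v_1,\ldots,v_r)$ to $\bar P(v_1^{\delta_1},\ldots,v_r^{\delta_r})$. Hence the vanishing condition for a fixed $\vec\delta$ says precisely that $F_{\vec\delta}(P)$ lies in the multi-degree $\vec\delta$ component of the ideal $I(\hat X^r)\subset S^\bullet W\otimes\cdots\otimes S^\bullet W$ of the Cartesian power $\hat X^r\subset (W^*)^r$.

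The final step is to identify $I(\hat X^r)_{\vec\delta}$ with $A_{\vec\delta,1}+\cdots+A_{\vec\delta,r}$. The inclusion $\supset$ is immediate, since an element of $A_{\vec\delta,i}$ has its $i$-th tensor slot in $I_{\delta_i}(X)$ and therefore vanishes on $\hat X^r$. For the reverse inclusion, one uses that over $\mathbb C$ the tensor product of reduced algebras is reduced, so the sum $\sum_{i=1}^r \pi_i^{*}I(X)$ inside the coordinate ring of $(W^*)^r$ is already radical with vanishing locus $\hat X^r$, yielding $I(\hat X^r)=\sum_i\pi_i^{*}I(X)$; restricting to multi-degree $\vec\delta$ gives exactly $\sum_i A_{\vec\delta,i}$. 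Combining the three steps, $P\in I_\delta(\sigma_r(X))$ if and only if $F_{\vec\delta}(P)\in A_{\vec\delta,1}+\cdots+A_{\vec\delta,r}$ for every weakly decreasing $\vec\delta$ summing to $\delta$, which is the claimed formula.

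The only non-formal step is the equality $I(\hat X^r)=\sum_i\pi_i^{*}I(X)$ rather than mere containment: the containment $\supset$ is definitional, while the reverse containment rests on the radicality of the sum of pullback ideals, which over $\mathbb C$ follows from the tensor product of reduced algebras being reduced. Everything else is bookkeeping — the multinomial expansion to split vanishing on $\widehat{\sigma_r(X)}$ by multi-degree, the identification of $F_{\vec\delta}(P)$ with the partial evaluation of $\bar P$, and extraction of the multi-graded piece of a multi-homogeneous ideal.
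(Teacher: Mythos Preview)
Your argument is correct. The paper itself does not supply a proof of this proposition; it simply introduces it with the phrase ``the following proposition rephrases multiprolongations'' and moves on, so there is no line-by-line comparison to make. What you have written is exactly the unpacking the paper suppresses: the multinomial expansion reduces membership in $I_\delta(\sigma_r(X))$ to vanishing of $F_{\vec\delta}(P)$ on $\hat X^r$ for every $\vec\delta$, and then one must identify the multigraded piece $I(\hat X^r)_{\vec\delta}$ with $A_{\vec\delta,1}+\cdots+A_{\vec\delta,r}$.

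You are right to flag that last identification as the only non-formal point. The inclusion $\supset$ is obvious; the reverse inclusion is equivalent to the statement that the image of $F_{\vec\delta}(P)$ in $\mathbb C[\hat X]_{\delta_1}\otimes\cdots\otimes\mathbb C[\hat X]_{\delta_r}$ vanishes as an element, not merely as a function on $\hat X^r$. Your appeal to the fact that a tensor product of reduced $\mathbb C$-algebras is reduced (so that $\sum_i\pi_i^{*}I(X)$ is already radical with zero locus $\hat X^r$) is the standard and correct way to close this gap. The paper's use of the word ``rephrases'' sweeps precisely this point under the rug, so in that sense your write-up is more complete than the paper's.
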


\begin{corollary}\label{proisec}
$I_d(X)^{((r-1)(d-1))}\subset I_{r(d-1)+1}(\sigma_r(X)).$
\end{corollary}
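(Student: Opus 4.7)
The plan is to combine the multiprolongation characterization of $I_\delta(\sigma_r(X))$ (Proposition \ref{multipro}) with the ``descent'' property of prolongations (Proposition \ref{proideal}), together with a pigeonhole observation on the index sequence $\vec\delta$. Set $\delta:=r(d-1)+1$, pick $f\in I_d(X)^{((r-1)(d-1))}\subset S^{\delta}W$, and fix a decreasing sequence $\vec\delta=(\delta_1,\ldots,\delta_r)$ with $\delta_1+\cdots+\delta_r=\delta$. By Proposition \ref{multipro}, it suffices to show that $F_{\delta_1,\ldots,\delta_r}(f)$ lies in $A_{\vec\delta,1}+\cdots+A_{\vec\delta,r}$ for every such $\vec\delta$.

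First I would note the pigeonhole step: since $\sum_i\delta_i=r(d-1)+1>r(d-1)$, at least one $\delta_i$ must satisfy $\delta_i\geq d$, and because the sequence is decreasing we may take $\delta_1\geq d$. Next I would iterate Proposition \ref{proideal} exactly $\delta_1-d$ times to conclude
$$I_d(X)^{((r-1)(d-1))}\ \subset\ I_{\delta_1}(X)^{((r-1)(d-1)-(\delta_1-d))}\ =\ I_{\delta_1}(X)^{(\delta-\delta_1)},$$
so in particular $f\in I_{\delta_1}(X)^{(\delta-\delta_1)}$. Applying Proposition \ref{propol} to the two-block polarization $F_{\delta_1,\delta-\delta_1}$ then gives
$$F_{\delta_1,\delta-\delta_1}(f)\ \in\ I_{\delta_1}(X)\otimes S^{\delta-\delta_1}W.$$

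Finally I would observe that the $r$-fold polarization $F_{\delta_1,\delta_2,\ldots,\delta_r}$ factors as $(\mathrm{id}\otimes F_{\delta_2,\ldots,\delta_r})\circ F_{\delta_1,\delta-\delta_1}$ on $S^\delta W$, since iterated polarization is associative up to the natural inclusions. Applying $\mathrm{id}\otimes F_{\delta_2,\ldots,\delta_r}$ to the inclusion above lands $F_{\delta_1,\ldots,\delta_r}(f)$ inside $I_{\delta_1}(X)\otimes S^{\delta_2}W\otimes\cdots\otimes S^{\delta_r}W=A_{\vec\delta,1}$, hence inside $A_{\vec\delta,1}+\cdots+A_{\vec\delta,r}$ as required. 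Since $\vec\delta$ was arbitrary, Proposition \ref{multipro} yields $f\in I_\delta(\sigma_r(X))$.

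The main (minor) obstacle is simply bookkeeping: checking that the $r$-fold polarization really factors through the two-block polarization $F_{\delta_1,\delta-\delta_1}$, and that the iterated application of Proposition \ref{proideal} can always be aimed at the largest index $\delta_1$. Both are routine, so the proof is essentially a two-line assembly once the pigeonhole choice $\delta_1\geq d$ is made.
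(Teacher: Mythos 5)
Your proof is correct and follows essentially the same route as the paper's: reduce via Proposition \ref{multipro} to showing $F_{\vec\delta}(f)\in A_{\vec\delta,1}$ for each $\vec\delta$, note by pigeonhole that $\delta_1\geq d$, iterate Proposition \ref{proideal} to descend from $I_d(X)^{((r-1)(d-1))}$ to $I_{\delta_1}(X)^{(\delta-\delta_1)}$, and finish with the two-block polarization. The only cosmetic difference is that you push $f$ forward through $F_{\delta_1,\ldots,\delta_r}$ explicitly while the paper computes the preimage $F_{\vec\delta}^{-1}(A_{\vec\delta,1})=I_{\delta_1}(X)^{(\delta-\delta_1)}$ directly; these are the same argument, and you also spell out the pigeonhole step and the factorization of the iterated polarization, which the paper leaves implicit.
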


\begin{proof}
By Proposition \ref{multipro},
\begin{eqnarray*}
I_{r(d-1)+1}(\sigma_r(X))&=&\bigcap_{\delta_1+\delta_2+\cdots+\delta_r=r(d-1)+1,\ \delta_1\geq\delta_2\geq\cdots\geq\delta_r}F_{\delta_1,\delta_2,\cdots,\delta_r}^{-1}(A_{\vec{\delta},1}+\cdots+A_{\vec{\delta},r})\\
&\supset&\bigcap_{\delta_1+\delta_2+\cdots+\delta_r=r(d-1)+1,\ \delta_1\geq\delta_2\geq\cdots\geq\delta_r}F_{\delta_1,\delta_2,\cdots,\delta_r}^{-1}(A_{\vec{\delta},1}).\\
\end{eqnarray*}
By similar arguments as Proposition \ref{propol}, $F_{\delta_1,\delta_2,\cdots,\delta_r}^{-1}(A_{\vec{\delta},1})=I_{\delta_1}(X)^{(r(d-1)+1-\delta_1)}$.\\
Since $\delta_1\geq d$, by Proposition \ref{proideal}, $I_d(X)^{((r-1)(d-1))}\subset I_{\delta_1}(X)^{(r(d-1)+1-\delta_1)}$, therefore\\ $I_d(X)^{((r-1)(d-1))}\subset I_{r(d-1)+1}(\sigma_r(X)).$
\end{proof}

%\begin{remark}
%Proposition \ref{multipro} and Corollary \ref{proisec} implies Theorem \ref{sidman}.
%\end{remark}

\begin{proof}[A new proof of Theorem \ref{sidman}]
First, by Proposition \ref{multipro},
\begin{eqnarray*}
I_{r(d-1)}(\sigma_r(X))&=&\bigcap_{\delta_1+\delta_2+\cdots+\delta_r=r(d-1)}F_{\delta_1,\delta_2,\cdots,\delta_r}^{-1}(A_{\vec{\delta},1}+\cdots+A_{\vec{\delta},r}).
\end{eqnarray*}
In particular, when $\delta_1=\delta_2=\cdots=\delta_r=(d-1)$, $A_{\vec{\delta},i}=0$ for $i=1,\cdots,r$, so $F_{\delta_1,\delta_2,\cdots,\delta_r}^{-1}(A_{\vec{\delta},1}+\cdots+A_{\vec{\delta},r})=0$. Therefore $I_{r(d-1)}(\sigma_r(X))=0$.\\
Second, by Proposition \ref{multipro},
\begin{eqnarray*}
I_{r(d-1)+1}(\sigma_r(X))&=&\bigcap_{\delta_1+\delta_2+\cdots+\delta_r=r(d-1)+1}F_{\delta_1,\delta_2,\cdots,\delta_r}^{-1}(A_{\vec{\delta},1}+\cdots+A_{\vec{\delta},r}).
\end{eqnarray*}
In particular, when $\delta_1=d,\ \delta_2=\cdots=\delta_r=d-1$, $A_{\vec{\delta},i}=0$ for $i=2,\cdots,r$. so $$F_{\delta_1,\delta_2,\cdots,\delta_r}^{-1}(A_{\vec{\delta},1}+\cdots+A_{\vec{\delta},r})=F_{\delta_1,\delta_2,\cdots,\delta_r}^{-1}(A_{\vec{\delta},1})=I_d(X)^{((r-1)(d-1))}.$$ Therefore $I_{r(d-1)+1}(\sigma_r(X))\subset I_d(X)^{((r-1)(d-1))}$.\\
On the other hand, by Corollary \ref{proisec}, $I_d(X)^{((r-1)(d-1))}\subset I_{r(d-1)+1}(\sigma_r(X))$,
so equality holds.
\end{proof}
Theorem \ref{sidman}, small examples and intuition lead to the following conjecture:
\begin{conjecture}
Let $X\in{P}W^*$ be an algebraic variety, and $\delta=kr+l$ with $0\leq l<r$,
take $\vec{\delta}$ such that $\delta_1=\cdots=\delta_l=k+1$ and $\delta_{l+1}=\cdots=\delta_r=k$,
then $$I_\delta(\sigma_r(X))=F_{\delta_1,\delta_2,\cdots,\delta_r}^{-1}(A_{\vec{\delta},1}+\cdots+A_{\vec{\delta},r}).$$
\end{conjecture}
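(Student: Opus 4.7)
The plan is first to dispense with the easy half and isolate the real content. By Proposition \ref{multipro} applied with the single choice $\vec{\delta} = \vec{\delta}^0$, one has $I_\delta(\sigma_r(X)) \subset F_{\vec{\delta}^0}^{-1}(A_{\vec{\delta}^0,1}+\cdots+A_{\vec{\delta}^0,r})$ automatically. The conjecture is therefore equivalent to the assertion that, among all multiprolongation conditions indexed by decreasing sequences $\vec{\delta}$ with $\sum\delta_i=\delta$, the balanced $\vec{\delta}^0$ is the tightest, i.e.
$$F_{\vec{\delta}^0}^{-1}\Bigl(\sum_k A_{\vec{\delta}^0,k}\Bigr) \;\subset\; F_{\vec{\delta}}^{-1}\Bigl(\sum_k A_{\vec{\delta},k}\Bigr) \qquad \text{for every admissible } \vec{\delta}.$$
This is the analogue for arbitrary $\delta$ of the phenomenon Sidman--Sullivant exploited in the special case $\delta = r(d-1)+1$ (Theorem \ref{sidman}), whose new proof in the paper proceeds by identifying a single $\vec{\delta}$ that dominates all others.

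My plan is then to prove the containment above by induction on the ``distance from balance'' of $\vec{\delta}$, measured by the number of elementary box moves that transfer a box from a coordinate $\delta_i$ to a coordinate $\delta_j$ with $\delta_j+1\le \delta_i$ (such moves strictly depart from the balanced configuration and suffice to reach every $\vec{\delta}$). The inductive step reduces to the key lemma: if $\vec{\delta}'$ differs from $\vec{\delta}$ by one such move, then $F_{\vec{\delta}}^{-1}(\sum_k A_{\vec{\delta},k}) \subset F_{\vec{\delta}'}^{-1}(\sum_k A_{\vec{\delta}',k})$. To attack it I would factor the polarization as
$$F_{\vec{\delta}'} \;=\; c\cdot\bigl(\operatorname{id}\otimes\cdots\otimes\mu_j\otimes\cdots\otimes\operatorname{id}\bigr)\circ\bigl(\operatorname{id}\otimes\cdots\otimes F_{\delta_i-1,1}\otimes\cdots\otimes\operatorname{id}\bigr)\circ F_{\vec{\delta}},$$
with $\mu_j\colon S^{\delta_j}W\otimes W\to S^{\delta_j+1}W$ the multiplication into the $j$-th slot, and push each summand of a decomposition $F_{\vec{\delta}}(P)=\sum_k a_k$ with $a_k\in A_{\vec{\delta},k}$ through this factorisation. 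Summands with $k=i$ are handled by Proposition \ref{proideal} (which asserts $I_d(X)^{(1)}\subset I_{d+1}(X)$), and summands with $k=j$ are handled by stability of the graded ideal $I(X)$ under multiplication by $W$.

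The main obstacle is the cross terms, namely $a_k$ with $k\neq i,j$: the partial polarization $F_{\delta_i-1,1}$ and the multiplication $\mu_j$ act on factors that are \emph{arbitrary} elements of $S^{\delta_i}W$ and $S^{\delta_j}W$, and there is no a priori reason for the image to lie in $\sum_{k'} A_{\vec{\delta}',k'}$. Compounding this, the decomposition $\sum_k a_k$ is highly non-unique, so tracking the cross terms directly is delicate. To surmount this I would exploit the $\mathfrak{S}_r$-covariance of $F_{\vec{\delta}}(P)$ inherited from the symmetry of $P$ (the tensor is invariant under permutations of equal coordinates) together with the $GL(V)$-equivariance of every map in sight, reducing to a single irreducible submodule $S_\lambda V\subset S^\delta(S^d V)$ and testing the lemma on its highest-weight vector with the raising-operator techniques developed in \S\ref{background}. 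I expect this cross-term analysis to be the crux of the argument, and it is very likely the reason the statement is left as a conjecture rather than a theorem.
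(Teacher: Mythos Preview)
This statement appears in the paper as a \emph{conjecture}, motivated by Theorem~\ref{sidman} and small examples; no proof is offered, so there is nothing to compare your attempt against. Your reduction is correct: one inclusion is immediate from Proposition~\ref{multipro}, and the content is precisely that the balanced multiprolongation condition dominates all others.

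Your diagnosis of where the inductive step breaks down, however, is off. In your factorisation the cross terms $a_k$ with $k\neq i,j$ are \emph{harmless}: the split-and-multiply operation touches only tensor slots $i$ and $j$, so the $k$-th factor of $a_k$ stays in $I_{\delta_k}(X)=I_{\delta'_k}(X)$ and the image of $a_k$ lands in $A_{\vec{\delta}',k}$ on the nose. The case $k=j$ is indeed handled by stability of $I(X)$ under multiplication. The real obstruction is the case $k=i$, which you claim is covered by Proposition~\ref{proideal}. That proposition asserts $I_d(X)^{(p)}\subset I_{d+1}(X)^{(p-1)}$, a statement about polynomials \emph{all} of whose $p$-th partials lie in $I_d$; it says nothing about the partials of an individual $f\in I_{\delta_i}(X)$. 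After applying $F_{\delta_i-1,1}$ to the $i$-th factor, that factor becomes a partial derivative $\partial_\alpha f$, and there is no reason for this to lie in $I_{\delta_i-1}(X)$---ideals are closed under multiplication by $W$, not under differentiation. (Incidentally, a box move from a larger coordinate to a smaller one \emph{approaches} the balanced configuration rather than departs from it, so your induction is oriented backwards; but whichever way you run it, one of the two diagonal cases forces you to differentiate an ideal element.) The non-uniqueness of the decomposition $\sum_k a_k$ prevents you from absorbing this bad term elsewhere without further input, and that input is exactly the open content of the conjecture.
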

\begin{example}Consider $Ch_3(V^*)$, by Example \ref{I723},
$I_3(Ch_3(V^*))=0$ and $I_4(Ch_3(V^*))=S_{(7,3,2)}V+S_{(6,2,2,2)}V+S_{(5,4,2,1)}V$.
Consider the polarization maps
$$F_{\delta,8-\delta}:S^8(S^3V)\rightarrow S^{\delta}(S^3V)\otimes S^{8-\delta}(S^3V).$$
By Propositions \ref{multipro} and \ref{proideal},
\begin{eqnarray}
I_8(\sigma_2(Ch_3(V^*)))&=&\bigcap_{\delta=4}^8F_{\delta,8-\delta}^{-1}[S^{\delta}(S^3V)\otimes I_{8-\delta}(Ch_3(V^*))+I_{\delta}(Ch_3(V^*))\otimes S^{8-\delta}(S^3V)]\nonumber \\
&=&\bigcap_{\delta=5}^8I_{\delta}(Ch_3(V^*))^{(8-\delta)}\bigcap F_{4,4}^{-1}[I_4(Ch_3(V^*))\otimes S^{4}(S^3V)\nonumber\\
&&+ S^{4}(S^3V)\otimes I_4(Ch_3(V^*))]\nonumber \\
&=&I_{5}(Ch_3(V^*))^{(3)}\bigcap F_{4,4}^{-1}[I_4(Ch_3(V^*))\otimes S^{4}(S^3V)\nonumber\\
&&+ S^{4}(S^3V)\otimes I_4(Ch_3(V^*))].
\end{eqnarray}
\end{example}
%\section{Product maps and elongations}

\subsection{Partial derivatives and prolongations }

Let $V={\rm span} \{e_1,\cdots,e_n\}$,  $S^dV$ has a natural basis $\{e_1^{\alpha_1}\cdots e_n^{\alpha_n}:=e^\alpha\}_{\alpha_1+\cdots+\alpha_n=d}$. Assume $e_1>e_2>\cdots>e_n$. Define the {\it dominance partial order} on the natural basis of $S^dV$ such that
$$e^\alpha>e^{\beta}\Leftrightarrow \alpha_1+\cdots+\alpha_i\geq \beta_1+\cdots+\beta_i\ {\rm for\ each}\ i.$$
It is equivalent to saying
$$e^\alpha>e^{\beta}\Leftrightarrow\ {\rm one\ can\ get}\ e^\alpha\ {\rm from}\ e^{\beta}\ {\rm via\ raising\ operators}.$$

Let $f\in W_{(a_1,\cdots,a_n)}\subset S^k(S^dV)$, let $\alpha$ be the index of the last $d$ elements in $(a_1,\cdots,a_n)$,
then $\frac{\partial}{\partial e^\alpha}$ is the lowest possible partial derivative of $f$  with respect to the  dominance partial order.

\begin{example}
Let $f\in W_{(5,4,4,2)}\subset S^5(S^3V)$, then $\alpha=(0,0,1,2)$  and the lowest possible partial derivative of $f$ is $\frac{\partial f}{\partial e_3e_4^2}$.
\end{example}
\begin{definition}
Let $e^\alpha=e_1^{\alpha_1}\cdots e_j^{\alpha_j}e_{j+1}^{\alpha_{j+1}}\cdots e_n^{\alpha_n}$, for $j=1,\cdots,n-1$, define the normalized  lowering operators
 $$\tilde{ E}_{j}^{j+1}e^\alpha=e_1^{\alpha_1}\cdots e_j^{\alpha_j-1}e_{j+1}^{\alpha_{j+1}+1}\cdots e_n^{\alpha_n}.$$

\end{definition}

The following proposition gives the relationship between raising operators and partial derivatives of polynomials in $S^k(S^dV)$.

\begin{proposition}
Let $f\in S^k(S^dV)$ and $e^\alpha$ be a basis vector of $S^{d}V$,
then $$[\frac{\partial}{\partial e^\alpha},E_{j+1}^j]f=(1+\alpha_{j+1})\frac{\partial f}{\partial(\tilde{ E}_{j}^{j+1}e^\alpha)}.$$
Where $\tilde{ E}_{j}^{j+1}$($j=1,\cdots,n-1$) are the normalized  lowering operators.
\end{proposition}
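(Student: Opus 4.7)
The plan is to realize both sides of the identity as operators in the Weyl algebra on the polynomial ring $S^{\bullet}(S^dV)\cong \mathbb{C}[x_\beta : |\beta|=d]$ (with $x_\beta$ denoting the variable attached to the basis monomial $e^\beta$), and then compute the commutator using the canonical relation $[\partial/\partial e^\alpha,\,e^\beta]=\delta_{\alpha,\beta}$. Since a commutator identity between linear operators is itself linear in $f$, it suffices to check equality on arbitrary basis monomials once the operators have been rewritten in this uniform differential form.

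The first step is to rewrite the raising operator as a first-order differential operator in the $e^\beta$. Because $E_{j+1}^j(e_{j+1})=e_j$ and kills every other $e_k$, the Leibniz rule for the induced $\mathfrak{gl}(V)$-action on $V^{\otimes d}$ (recalled in \S\ref{background}), together with symmetrization, gives
\[
E_{j+1}^j(e^\beta)\;=\;\beta_{j+1}\,e^{R(\beta)},
\]
where $R(\beta):=(\beta_1,\ldots,\beta_j+1,\beta_{j+1}-1,\ldots,\beta_n)$; the factor $\beta_{j+1}$ records the number of copies of $e_{j+1}$ available to be sent to $e_j$. A second application of Leibniz for the induced action on $S^k(S^dV)$ then yields the representation
\[
E_{j+1}^j\;=\;\sum_{\beta}\beta_{j+1}\,e^{R(\beta)}\,\frac{\partial}{\partial e^{\beta}}.
\]

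The second step is a direct commutator computation. Using $[\partial/\partial e^\alpha,e^{R(\beta)}]=\delta_{\alpha,R(\beta)}$ and the fact that $\partial/\partial e^\alpha$ commutes with every $\partial/\partial e^\beta$, one obtains
\[
\Bigl[\tfrac{\partial}{\partial e^\alpha},\,E_{j+1}^j\Bigr]\;=\;\sum_\beta\beta_{j+1}\,\delta_{\alpha,R(\beta)}\,\tfrac{\partial}{\partial e^\beta}.
\]
The Kronecker delta is nonzero precisely when $\beta=\tilde{E}_{j}^{j+1}(\alpha)$, which requires $\alpha_j\geq1$, and in that case $\beta_{j+1}=\alpha_{j+1}+1$. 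This delivers the claimed formula, with the convention that both sides vanish when $\alpha_j=0$ (in which context $\tilde{E}_j^{j+1}e^\alpha$ is not a monomial of $S^dV$).

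There is no serious obstacle: the entire argument is bookkeeping inside the Weyl algebra. The only points requiring care are the combinatorial factor $\beta_{j+1}$ produced by the two applications of Leibniz, and the degenerate case $\alpha_j=0$, both of which are handled transparently by the computation above.
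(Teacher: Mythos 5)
Your proof is correct, and it takes a genuinely different organizational route from the paper's. The paper's proof reduces to a single monomial $f$, explicitly factors $f=g(e^\alpha)^m(e^\beta)^n$ with $e^\beta=\tilde E_j^{j+1}e^\alpha$, and matches terms in $\partial_\alpha(E f)$ against $E(\partial_\alpha f)$ by hand; the mismatch term $n(1+\alpha_{j+1})g(e^\alpha)^m(e^\beta)^{n-1}$ is then identified with $(1+\alpha_{j+1})\partial f/\partial e^\beta$. You instead realize $E_{j+1}^j$ once and for all as the first-order operator $\sum_\gamma\gamma_{j+1}\,e^{R(\gamma)}\,\partial/\partial e^\gamma$ in the Weyl algebra on $\mathbb{C}[e^\gamma : |\gamma|=d]$, after which the commutator is a one-line consequence of $[\partial/\partial e^\alpha,e^{R(\gamma)}]=\delta_{\alpha,R(\gamma)}$. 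This buys you a cleaner, more uniform argument that avoids the bookkeeping over the powers $m,n$ and is trivially extensible to any $\mathfrak{gl}(V)$-element, not just the simple raising operators; it also handles the degenerate case $\alpha_j=0$ transparently (the Kronecker delta is never satisfied, both sides vanish), which the paper's computation leaves implicit. The two arguments are mathematically the same Leibniz computation, but your packaging is tighter. One cosmetic note: the paper's proof writes $E_j^{j+1}$ in several places where the statement has $E_{j+1}^j$; both refer to the same raising operator $E^j_{j+1}(e_{j+1})=e_j$, and your formulas are internally consistent with the statement's convention.
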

\begin{proof}
Since all the operators here are linear, we only to prove the case when $f$ is a monomial.
Let $e^\alpha=e_1^{\alpha_1}\cdots e_j^{\alpha_j}e_{j+1}^{\alpha_{j+1}}\cdots e_n^{\alpha_n}$, so $\tilde{E}_{j}^{j+1}e^\alpha=e_1^{\alpha_1}\cdots e_j^{\alpha_j-1}e_{j+1}^{\alpha_{j+1}+1}\cdots e_n^{\alpha_n}=e^\beta$.
Write $f=g(e^\alpha)^m(e^\beta)^n$, where $g$ is not divisible by $e^\alpha$ or $e^\beta$.
Then
\begin{eqnarray*}
E_{j}^{j+1}f&=&(E_{j}^{j+1}g)(e^\alpha)^m(e^\beta)^n+gE_{j}^{j+1}((e^\alpha)^m)(e^\beta)^n+g(e^\alpha)^{m}E_{j}^{j+1}((e^\beta)^{n})\\
&=&(E_{j}^{j+1}g)(e^\alpha)^m(e^\beta)^n+mg(e^\alpha)^{m-1}E_{j}^{j+1}(e^\alpha)(e^\beta)^n+n(1+\alpha_{j+1})g(e^\alpha)^{m+1}(e^\beta)^{n-1}.
\end{eqnarray*}
So
\begin{eqnarray}\label{left}
\frac{\partial (E_{j}^{j+1}f)}{\partial e^\alpha}&=&m(E_{j}^{j+1}g)(e^\alpha)^{m-1}(e^\beta)^n+m(m-1)g(e^\alpha)^{m-2}E_{j}^{j+1}(e^\alpha)(e^\beta)^n+\nonumber\\
& &n(m+1)(1+\alpha_{j+1})g(e^\alpha)^{m}(e^\beta)^{n-1}.
\end{eqnarray}
On the other hand
$$\frac{\partial f}{\partial e^\alpha}=mg(e^\alpha)^{m-1}(e^\beta)^n.$$
\begin{eqnarray}\label{right}
E_{j}^{j+1}(\frac{\partial f}{\partial e^\alpha})&=&m (E_{j}^{j+1}g)(e^\alpha)^{m-1}(e^\beta)^n+m(m-1)g(e^\alpha)^{m-2}E_{j}^{j+1}(e^\alpha)(e^\beta)^n+\nonumber\\
& &nm(1+\alpha_{j+1})g(e^\alpha)^{m}(e^\beta)^{n-1}.
\end{eqnarray}
Combining \eqref{left} and \eqref{right}, we conclude:
\begin{eqnarray*}
\frac{\partial (E_{j}^{j+1}f)}{\partial e^\alpha}-E_{j+1}^j(\frac{\partial f}{\partial e^\alpha})=n(1+\alpha_{j+1})g(e^\alpha)^{m}(e^\beta)^{n-1}=(1+\alpha_{j+1})\frac{\partial f}{\partial(\tilde{E}_{j}^{j+1}e^\alpha)}.
\end{eqnarray*}
\end{proof}

In particular if $f\in S^k(S^dV)$ is a highest weight vector of some $GL(V)$-module, then
\begin{eqnarray}\label{raipar}
E_{j+1}^j(\frac{\partial f}{\partial e^\alpha})=-(1+\alpha_{j+1})\frac{\partial f}{\partial(\tilde{ E}_{j}^{j+1}e^\alpha)}.
\end{eqnarray}

Therefore
\begin{lemma}
If $f\in S^{k+1}(S^dV)$ is a highest weight vector for some $GL(V)$ module $S_{(a_1,\cdots,a_n)}V=S_aV$,
then the lowest possible partial derivative  $\frac{\partial f}{\partial e^\alpha}$  is killed by all the raising operators,
i.e. either  $\frac{\partial f}{\partial e^\alpha}$ is 0 or a highest weight vector of $S_{a-\alpha}V \subset S^k(S^dV)$.
\end{lemma}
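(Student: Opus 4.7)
The statement asserts that $\frac{\partial f}{\partial e^\alpha}$ is either zero or a highest weight vector of $S_{a-\alpha}V \subset S^k(S^dV)$. Since the simple raising operators $E_{j+1}^{j}$ ($j=1,\dots,n-1$) generate the nilpotent upper-triangular part of $\mathfrak{gl}(V)$, it suffices to verify that each of them annihilates $\frac{\partial f}{\partial e^\alpha}$. The weight of the derivative is $a-\alpha$ by construction, so once it is killed by all raising operators the conclusion follows. The plan is to plug into the commutator identity \eqref{raipar} proved just above,
\begin{equation*}
E_{j+1}^j\left(\frac{\partial f}{\partial e^\alpha}\right) = -(1+\alpha_{j+1})\,\frac{\partial f}{\partial(\tilde{E}_j^{j+1}e^\alpha)},
\end{equation*}
and show that the right-hand side vanishes for each $j$.

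The main ingredient is a structural description of $\alpha$. By construction $\alpha$ is obtained greedily by packing the $d$ units into the bottom-most coordinates, subject to $\alpha_i\leq a_i$: equivalently, $\alpha$ is the unique dominance-minimum among multi-indices $\gamma$ with $|\gamma|=d$ and $0\leq\gamma_i\leq a_i$. From this description I extract the following exchange property, which is the heart of the argument: \emph{whenever $\alpha_j\geq 1$, one has $\alpha_i=a_i$ for every $i>j$}. Indeed, if some $i>j$ had $\alpha_i<a_i$, we could transfer one unit from position $j$ to position $i$, producing a multi-index strictly smaller than $\alpha$ in dominance order, contradicting minimality.

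With this in hand, I split into two cases. If $\alpha_j=0$, then $\tilde{E}_j^{j+1}e^\alpha$ tries to lower a nonexistent $e_j$ and is therefore interpreted as zero, making the right-hand side of the identity vanish. If $\alpha_j\geq 1$, then by the structural property $\alpha_{j+1}=a_{j+1}$, so $\beta:=\tilde{E}_j^{j+1}\alpha$ satisfies $\beta_{j+1}=a_{j+1}+1$, giving $(a-\beta)_{j+1}=-1$. Because every monomial in $S^k(S^dV)$ has nonnegative weight components, the weight space $W_{a-\beta}\subset S^k(S^dV)$ is zero and hence $\frac{\partial f}{\partial e^\beta}=0$. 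In either case $E_{j+1}^j\frac{\partial f}{\partial e^\alpha}=0$, which completes the proof.

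The only subtlety is the structural exchange lemma for $\alpha$; once that is in place, the computation is a one-line application of the commutator identity combined with the positivity of weights on $S^k(S^dV)$. A minor bookkeeping point is the interpretation of $\tilde{E}_j^{j+1}e^\alpha$ when $\alpha_j=0$; the cleanest convention, consistent with the derivation of \eqref{raipar}, is to set it equal to zero so that the associated partial derivative is automatically zero.
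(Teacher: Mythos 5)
Your proof is correct, and it follows the route the paper clearly intends (the paper simply writes ``Therefore'' after \eqref{raipar} and leaves the rest implicit). You fill in the details cleanly: the reduction to simple raising operators, the two-case analysis via \eqref{raipar}, and, crucially, the exchange property of $\alpha$ that the paper never spells out. Two small points worth noting. First, your characterization of $\alpha$ as the dominance-minimum among $\{\gamma : |\gamma|=d,\ 0\le \gamma_i\le a_i\}$ is asserted without proof; it is elementary (the greedy-from-the-bottom index minimizes each partial sum $\gamma_1+\cdots+\gamma_i$ simultaneously) but deserves a line. Second, your two cases can be unified: $\tilde{E}_j^{j+1}\alpha$ always has strictly smaller partial sums than $\alpha$, so by minimality it cannot lie in the valid set, meaning it either has a negative entry (your Case 1, where $\partial_{e^{\tilde{E}_j^{j+1}\alpha}}$ is the zero operator) or exceeds $a$ in some coordinate (your Case 2, where the target weight space is zero). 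Either way the argument is sound, and your version makes explicit the structural fact---``if $\alpha_j\ge 1$ then $\alpha_i=a_i$ for all $i>j$''---that the paper tacitly relies on, which is a genuine improvement in clarity over the source.
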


By induction on dominance partial order, I conclude
\begin{proposition}\label{testnot}
If $f\in S^{k+1}(S^dV)$ is a highest weight vector for some module $S_{(a_1,\cdots,a_n)}V=S_aV$, then there exists
 a basis vector $e^\beta$ of $S^dV$ such that $\frac{\partial f}{\partial e^\beta}$ is a highest vector
of $S_{a-\beta}V \subset S^k(S^dV) $.
\end{proposition}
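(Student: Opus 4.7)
The plan is to locate $\beta$ by minimizing over the dominance partial order among those multi-indices for which the partial derivative does not vanish, and then to use identity \eqref{raipar} to convert that minimality into the highest-weight property.

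More precisely, I would set $T=\{\beta\in\mathbb{N}^n : |\beta|=d,\ \partial f/\partial e^\beta\ne 0\}$. This set is nonempty because $f\ne 0$ in $S^{k+1}(S^dV)$ forces at least one partial derivative with respect to a basis element of $S^dV$ to be nonzero. Since $T$ is finite, I would choose $\beta\in T$ that is minimal in the dominance partial order.

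Next I would verify that $\partial f/\partial e^\beta$ is annihilated by every simple raising operator $E_{j+1}^{j}$ for $j=1,\ldots,n-1$, by case analysis on whether $\beta_j$ is zero. When $\beta_j\geq 1$, the multi-index $\gamma$ defined by $\tilde E_j^{j+1}e^\beta=e^\gamma$ is strictly less than $\beta$ in the dominance order (passing from $\beta$ to $\gamma$ moves one unit of weight from position $j$ to position $j+1$), so $\partial f/\partial e^\gamma=0$ by the minimality of $\beta$, and \eqref{raipar} gives $E_{j+1}^{j}(\partial f/\partial e^\beta)=0$. When $\beta_j=0$, the vector $\tilde E_j^{j+1}e^\beta$ must be interpreted as $0$, and \eqref{raipar} again forces $E_{j+1}^{j}(\partial f/\partial e^\beta)=0$. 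Since the simple raising operators generate the full nilradical of the Borel, $\partial f/\partial e^\beta$ is killed by every raising operator.

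Because $\partial f/\partial e^\beta$ has weight $a-\beta$, is nonzero by the choice $\beta\in T$, and is annihilated by every raising operator, it is a highest weight vector of the irreducible submodule $S_{a-\beta}V\subset S^k(S^dV)$. The only real subtlety is the bookkeeping around the degenerate case $\beta_j=0$; adopting the convention $\partial f/\partial 0=0$ makes the right-hand side of \eqref{raipar} tautologically zero in that case, which is exactly what the argument requires and which is compatible with the commutator derivation used to establish \eqref{raipar}.
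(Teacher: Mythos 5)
Your argument is correct and is essentially the paper's own argument made explicit: the paper's Lemma and its appeal to ``induction on dominance partial order'' amount to precisely your choice of a $\beta$ minimal in $T$ together with the observation that \eqref{raipar} rewrites $E^{j}_{j+1}\bigl(\partial f/\partial e^\beta\bigr)$ as a scalar multiple of a partial derivative with respect to a strictly smaller index. The only wrinkle you add --- the convention that $\partial f/\partial(\tilde E_j^{j+1}e^\beta)=0$ when $\beta_j=0$ --- is indeed forced by the commutator derivation (the factor $(e^{\tilde E_j^{j+1}\beta})^n$ there has $n=0$ when the index is infeasible), so the degenerate case is handled correctly.
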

By Proposition \ref{testnot},
\begin{corollary}Let $f\in S^{k+1}(S^dV)$ be a highest weight vector for some module $S_{(a_1,\cdots,a_n)}V=S_aV$, if we can find all the $e^\beta$ such that $\frac{\partial f}{\partial e^\beta}$ is a highest vector
of $S_{a-\beta}V \subset S^k(S^dV) $, the sum of all these modules is the smallest possible module such that  $S_aV$ lies in its first prolongation.
\end{corollary}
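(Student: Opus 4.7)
The plan is to re-cast the prolongation condition as a polarization statement and then read off the irreducible decomposition using Pieri's rule. First, Proposition~\ref{propol} gives $S_aV\subset A^{(1)}$ if and only if $F_{k,1}(f)\in A\otimes S^dV$, and because $F_{k,1}(f)=\sum_\beta \frac{\partial f}{\partial e^\beta}\otimes e^\beta$ with the $e^\beta$ forming a basis of $S^dV$, this holds if and only if every $\frac{\partial f}{\partial e^\beta}\in A$. Consequently, the smallest admissible $A$ is the $GL(V)$-submodule $M\subset S^k(S^dV)$ generated by the collection of all partial derivatives $\{\frac{\partial f}{\partial e^\beta}\}_\beta$.

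Next I would determine which irreducibles appear in $M$. Since $F_{k,1}(S_aV)$ is an irreducible copy of $S_aV$ in $S^k(S^dV)\otimes S^dV$, Pieri's rule restricts the isotypic components of $M$ to those of the form $S_{a-\beta}V$ for $\beta$ a Pieri-admissible horizontal $d$-strip, because $c_{\mu,(d)}^a\in\{0,1\}$ with the value $1$ exactly on these $\mu=a-\beta$. For each such $\beta$, the summand $S_{a-\beta}V$ is forced into $A$ precisely when the projection of $F_{k,1}(f)$ onto the Pieri copy of $S_aV$ inside $S_{a-\beta}V\otimes S^dV$ is nonzero. Pairing $F_{k,1}(f)$ with the known highest weight vector $v_{a-\beta}\otimes e^\beta+(\text{terms of lower }S^dV\text{-weight})$ of that Pieri copy kills every $\gamma\neq\beta$ contribution, and the projection reduces to the coefficient of $v_{a-\beta}$ inside $\frac{\partial f}{\partial e^\beta}$. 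Because the weight-$(a-\beta)$ space of $S_{a-\beta}V$ is one-dimensional and spanned by the highest weight vector, this coefficient is nonzero exactly when $\frac{\partial f}{\partial e^\beta}$ determines a highest weight vector of $S_{a-\beta}V\subset S^k(S^dV)$ in the sense of Proposition~\ref{testnot}. Summing the $S_{a-\beta}V$ over all such $\beta$ yields $M$.

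The main technical obstacle will be handling the plethysm multiplicity $p_{a-\beta}(k,d)$: when several copies of $S_{a-\beta}V$ sit inside $S^k(S^dV)$, $F_{k,1}(f)$ selects a specific one-dimensional line in the multiplicity space, and one must verify that this is the same line spanned by the highest-weight piece of $\frac{\partial f}{\partial e^\beta}$. The Pieri equality $c_{a-\beta,(d)}^a=1$ is what forces this identification: the $S_aV$-isotypic contribution from each admissible $\beta$ is one-dimensional, so it pins down a unique line in the multiplicity space, namely the one along which the copy of $S_{a-\beta}V$ determined by $\frac{\partial f}{\partial e^\beta}$ sits. Once this reconciliation is recorded, Proposition~\ref{testnot} supplies the existence of at least one such $\beta$, and the polarization analysis closes both the necessity and the sufficiency directions, yielding the corollary.
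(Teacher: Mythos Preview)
The paper gives no proof at all: it simply writes ``By Proposition~\ref{testnot},'' and states the corollary. The intended argument is the one implicit in equation~\eqref{raipar} and the lemma preceding Proposition~\ref{testnot}: the span $P$ of all partial derivatives $\{\partial f/\partial e^\beta\}$ is closed under raising operators, each partial derivative has the distinct weight $a-\beta$, and therefore the highest weight vectors occurring in $P$ are precisely those $\partial f/\partial e^\beta$ that happen to be annihilated by every $E_{j+1}^j$. The smallest $GL(V)$-submodule containing $P$ is then the sum of the irreducibles generated by these highest-weight partial derivatives. Your polarization/Pieri route is a genuinely different and more structural argument; it makes explicit why only Pieri-admissible $\mu=a-\beta$ can occur and how the multiplicity space is selected.

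There is, however, a real gap in your write-up. Your key step is the sentence ``this coefficient is nonzero exactly when $\frac{\partial f}{\partial e^\beta}$ determines a highest weight vector of $S_{a-\beta}V$.'' What your Pieri computation actually shows is that $S_{a-\beta}V$ is forced into $A$ exactly when $\partial f/\partial e^\beta$ has nonzero \emph{projection} to the highest weight line of $S_{a-\beta}V$. That is not the same as $\partial f/\partial e^\beta$ \emph{being} a highest weight vector: the derivative could have the form $v_{a-\beta}+w$ with $w$ a weight-$(a-\beta)$ vector lying in some $S_\lambda V$ with $\lambda>a-\beta$, and then it is not killed by raising operators. Your Pieri analysis in fact decomposes $F_{k,1}(f)$ as a sum of the Pieri highest weight vectors $H_\beta\in S_{a-\beta}V\otimes S^dV$, and the $e^\gamma$-component of $H_\beta$ is nonzero whenever $\gamma\geq\beta$; so $\partial f/\partial e^{\beta_0}$ picks up contributions from every $H_\beta$ with $\beta\leq\beta_0$, not just $H_{\beta_0}$. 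To close this gap you must invoke \eqref{raipar}: it tells you that the ``extra'' components coming from $\beta<\beta_0$ already lie in the modules generated by partial derivatives of strictly higher weight, so by induction on dominance they are contained in the sum you have already built. Without that step your argument proves a slightly different (and arguably more precise) statement than the one the corollary literally asserts.
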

For simplicity, write $\frac{\partial f}{\partial e^\beta}=f_{e^\beta}$ from now on.
\begin{example}
Let $f$ be the highest weight vector of $S_{(7,3,2)}V\subset S^4(S^3V)$ in Example \ref{hwvector732},
then $f_{e_2e_3^2}=(e_1^3)^2(e_1e_2^2)-e_1^3(e_1^2e_2)^2$, which is a highest weight vector of $S_{(7,2)}V\subset S^3(S^3V)$.
\end{example}

The following proposition, tells us  which prolongation a given module lies in.
\begin{proposition}\label{testyes}
If $S_aV\subset S^{k+1}(S^dV)$ with multiplicity $m_a>0$,
let
\begin{eqnarray*}
M_a&=&\{b|S_aV\subset S_bV\otimes S^dV\  {\rm as\ abstract\ modules\ by\ Pieri's\ rule}\\
&&{\rm and}\ S_bV\subset S^k(S^dV)\ {\rm with\ multiplicity}\ m_b>0\}.
\end{eqnarray*}
then $$(S_aV)^{\oplus m_a}\subset(\bigoplus_{b\in M_a}(S_bV)^{\oplus m_b})^{(1)}.$$

In particular, $$m_a\leq\sum_{b\in M_a}m_b.$$
\end{proposition}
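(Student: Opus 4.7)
The plan is to translate the prolongation $A^{(1)}$ into a preimage under the polarization map via Proposition~\ref{propol}, and then exploit $GL(V)$-equivariance together with Pieri's rule. Set $A = \bigoplus_{b \in M_a}(S_bV)^{\oplus m_b} \subset S^k(S^dV)$. By Proposition~\ref{propol}, a submodule $N \subset S^{k+1}(S^dV)$ lies in $A^{(1)}$ exactly when its image under the $GL(V)$-equivariant polarization map $F_{k,1}\colon S^{k+1}(S^dV) \rightarrow S^k(S^dV) \otimes S^dV$ is contained in $A \otimes S^dV$. Thus the proposition becomes a statement about where certain isotypic components can land under an equivariant map.

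First I would establish the inclusion $(S_aV)^{\oplus m_a} \subset A^{(1)}$. Decomposing $S^k(S^dV) = \bigoplus_b (S_bV)^{\oplus m_b}$ into isotypic components and tensoring with $S^dV$ gives
$$S^k(S^dV) \otimes S^dV = \bigoplus_b (S_bV \otimes S^dV)^{\oplus m_b}.$$
By Pieri's rule, $S_aV \subset S_bV \otimes S^dV$ if and only if $b \in M_a$, and the multiplicity is one in each such case. Hence the $S_aV$-isotypic component of $S^k(S^dV) \otimes S^dV$ lies inside $\bigoplus_{b \in M_a}(S_bV \otimes S^dV)^{\oplus m_b} \subset A \otimes S^dV$. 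By equivariance of $F_{k,1}$ together with Schur's lemma, the image of every copy of $S_aV$ in $S^{k+1}(S^dV)$ must land in the $S_aV$-isotypic component of the target, which is contained in $A \otimes S^dV$. This yields $(S_aV)^{\oplus m_a} \subset A^{(1)}$.

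Next I would derive the multiplicity bound from the injectivity of $F_{k,1}$. If $f \in S^{k+1}(S^dV)$ satisfied $\partial f/\partial e^\alpha = 0$ for every basis vector $e^\alpha$ of $S^dV$, then $f$ would be a constant polynomial on $(S^dV)^*$, hence zero for $k+1 \geq 1$. Thus $F_{k,1}$ is injective, and in particular its restriction to the $S_aV$-isotypic component of $S^{k+1}(S^dV)$ is an injection into the $S_aV$-isotypic component of $S^k(S^dV) \otimes S^dV$. Comparing dimensions gives $m_a \dim S_aV \leq \bigl(\sum_{b \in M_a} m_b\bigr) \dim S_aV$, and hence $m_a \leq \sum_{b \in M_a} m_b$.

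The argument is essentially a clean bookkeeping exercise combining Pieri's rule, Schur's lemma, and the standard properties of polarization, so I do not expect a serious obstacle. The one conceptual point worth flagging is the separation of the two conclusions: the module-theoretic inclusion uses only equivariance plus Pieri, whereas the multiplicity inequality additionally requires the injectivity of $F_{k,1}$ to upgrade an abstract containment into a numerical bound.
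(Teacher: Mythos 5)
Your proof is correct and follows the same route as the paper's own proof: translate the prolongation to a preimage under the polarization map $F_{k,1}$ via Proposition~\ref{propol}, locate the $S_aV$-isotypic component of $S^k(S^dV)\otimes S^dV$ using Pieri's rule and Schur's lemma, and then use the injectivity of $F_{k,1}$ to extract the multiplicity bound. The paper's proof is simply a terser version of the same three steps, leaving the Pieri bookkeeping and the injectivity of polarization implicit.
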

\begin{proof}
Consider the polarization map $$P_{k,1}:S^{k+1}(S^dV) \rightarrow S^{k}(S^dV)\otimes S^dV.$$
By Schur's lemma $$P_{k,1}((S_aV)^{\oplus m_a})\subset (\bigoplus_{b\in M_a}(S_bV)^{\oplus m_b})\otimes S^dV.$$
By Proposition \ref{propol} $$(S_aV)^{\oplus m_a}\subset(\bigoplus_{b\in M_a}(S_bV)^{\oplus m_b})^{(1)}.$$
Since $P_{k,1}$ is injective,  $$m_a\leq\sum_{b\in M_a}m_b.$$
\end{proof}
\begin{proposition}\label{S5442}
 The module $S_{(5,4,4,2)}V\subset S^5(S^3V)$ is contained in $(S_{(5,4,2,1)}V\oplus S_{(4,4,4)}V)^{(1)}$.
Let $f\in S_{(5,4,4,2)}V\subset S^5(S^3V)$ be a highest weight vector,
then  $f_{e_1e_4^2} $ is a highest weight vector of $S_{(4,4,4)}V\subset S^4(S^3V)$ and
$f_{e_3^2e_4} $ is a highest weight vector of $S_{(5,4,2,1)}V\subset S^4(S^3V)$.
Therefore $S_{(5,4,4,2)}V$ is not contained in the first prolongation of $S_{(4,4,4)}V$ or $S_{(5,4,2,1)}V$.
\end{proposition}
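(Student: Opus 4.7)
The plan is to apply Proposition~\ref{testyes} with $a = (5,4,4,2)$, $d = 3$, $k = 4$. I first enumerate the set $M_a$. By Pieri's rule, the partitions $b \vdash 12$ with $S_{(5,4,4,2)}V \subset S_b V \otimes S^3 V$ are those obtained from $(5,4,4,2)$ by removing a horizontal strip of size $3$; a direct enumeration gives the five candidates $(5,4,3)$, $(5,4,2,1)$, $(4,4,4)$, $(4,4,3,1)$, and $(4,4,2,2)$. Next I would check which of these actually occur in the plethysm $S^4(S^3V)$; from the standard decomposition (obtainable by a character computation in degree $12$, or by software such as \textsc{LiE}/SageMath), only $(5,4,2,1)$ and $(4,4,4)$ appear with positive multiplicity. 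Hence $M_{(5,4,4,2)} = \{(5,4,2,1),(4,4,4)\}$, and Proposition~\ref{testyes} immediately yields the containment $S_{(5,4,4,2)}V \subset (S_{(5,4,2,1)}V \oplus S_{(4,4,4)}V)^{(1)}$.

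For the explicit partial derivatives, I would construct a highest weight vector $f \in W_{(5,4,4,2)} \subset S^5(S^3V)$ following the method of Proposition~\ref{hwvector732}: write a generic weight vector as a linear combination of the natural basis monomials of weight $(5,4,4,2)$, impose $E^1_2 f = E^2_3 f = E^3_4 f = 0$, and solve the resulting linear system. From this explicit $f$, I would then read off $f_{e_1 e_4^2}$ (of weight $(5,4,4,2)-(1,0,0,2)=(4,4,4)$) and $f_{e_3^2 e_4}$ (of weight $(5,4,4,2)-(0,0,2,1)=(5,4,2,1)$). To verify each is a highest weight vector of the claimed module, I would invoke \eqref{raipar}, which reduces each annihilation condition $E^j_{j+1} f_{e^\alpha} = 0$ to the vanishing of $f_{e^{\tilde\alpha}}$, where $\tilde\alpha$ is obtained by lowering $e_j$ to $e_{j+1}$ in $\alpha$. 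For instance $E^1_2 f_{e_1 e_4^2} = 0$ reduces to $f_{e_2 e_4^2} = 0$, a concrete condition readable off the explicit $f$; the other raising operators are handled similarly.

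Finally, the non-containment statements follow quickly: since $f_{e_3^2 e_4}$ is a nonzero highest weight vector of $S_{(5,4,2,1)}V$ and $f_{e_1 e_4^2}$ is a nonzero highest weight vector of $S_{(4,4,4)}V$, they lie in distinct irreducible isotypic components of $S^4(S^3V)$. If $S_{(5,4,4,2)}V$ were contained in $(S_{(4,4,4)}V)^{(1)}$ alone, then every partial $f_{e^\alpha}$ with $|\alpha|=3$ would lie in the $S_{(4,4,4)}V$ isotypic component, contradicting the fact that $f_{e_3^2 e_4}$ is a highest weight vector of a different irrep; the symmetric argument rules out $(S_{(5,4,2,1)}V)^{(1)}$. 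The main obstacle is computational: the weight space $W_{(5,4,4,2)} \subset S^5(S^3V)$ is sizable, so writing down $f$ explicitly and then verifying the required partial-derivative identities demands careful bookkeeping, most practically carried out with computer algebra assistance.
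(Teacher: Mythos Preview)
Your first paragraph matches the paper's argument exactly: the containment $S_{(5,4,4,2)}V\subset (S_{(5,4,2,1)}V\oplus S_{(4,4,4)}V)^{(1)}$ follows from Proposition~\ref{testyes} together with the explicit decomposition of $S^4(S^3V)$ and Pieri's rule, and your enumeration of the five candidate $b$'s is correct.

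Where you diverge from the paper is in the second part. You propose to solve a linear system for $f$ explicitly (as in Proposition~\ref{hwvector732}) and then read off $f_{e_1e_4^2}$ and $f_{e_3^2e_4}$ directly. This is valid and, with computer assistance, would succeed. The paper instead avoids constructing $f$ altogether. It first observes, via \eqref{raipar} and induction on the dominance order, that $f_{e_1e_4^2}$ and $f_{e_3^2e_4}$ are automatically killed by all raising operators, so each equals a scalar multiple $c_1h_1$, $c_2h_2$ of the respective highest weight vectors. The substantive content is then to show \emph{both} scalars are nonzero. For this the paper exploits the commutativity of mixed partials: one computes $f_{e_3^3}$ in terms of $f_{e_3^2e_4}$ (via $E^3_4$ and \eqref{raipar}), and then equates $(f_{e_1e_4^2})_{e_3^3}=(f_{e_3^3})_{e_1e_4^2}$. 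Using the explicit description of $h_2$ from Proposition~\ref{hwvector5421}, both sides become highest weight vectors of $S_{(4,4,1)}V\subset S^3(S^3V)$, forcing $c_1=c_2$. Since Proposition~\ref{testnot} guarantees they cannot both vanish, both are nonzero.

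The trade-off: your approach is conceptually straightforward but, as you acknowledge, involves a sizeable linear system (the weight space $W_{(5,4,4,2)}$ in $S^5(S^3V)$). The paper's argument requires no knowledge of $f$ beyond its weight and the already-computed highest weight vector of $S_{(5,4,2,1)}V$ in Proposition~\ref{hwvector5421}; it replaces a large computation by a short structural identity. Both are correct proofs.
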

\begin{proof}
Since
{\small
\begin{eqnarray*}
 S^4(S^3V)&=&S_{(12)}V +S_{(10,2)}V +S_{(9,3)}V +S_{(8,4)}V+\\
& &S_{( 8,2,2)}V+S_{(7,4,1)}V +S_{( 7,3,2)}V +S_{(6,6)}V +\\
& &S_{( 6,4,2)}V +S_{(6,2,2,2)}V +S_{(5,4,2,1)}V +S_{(4,4,4)}V.
\end{eqnarray*}}
By Proposition \ref{testyes}, $S_{(5,4,4,2)}\subset(S_{(5,4,2,1)}V\oplus S_{(4,4,4)}V)^{(1)}$.
By induction on the dominance partial order, $f_{e_1e_4^2} $ and $f_{e_3^2e_4} $ are killed by all
raising operators.  Let $h_1$ be a highest weight vector  of $S_{(4,4,4)}V\subset S^4(S^3V)$ and
$h_2$ be a highest weight vector  of $S_{(5,4,2,1)}V\subset S^4(S^3V)$.
Set $f_{e_1e_4^2}=c_1h_1$ and $f_{e_3^2e_4}=c_2h_2$, where $c_1$ and $c_2$ are constants, then $c_1, c_2$ can not be both 0 by Proposition \ref{testnot}.

Since $f_{e_3^3}\in S_{(5,4,2,1)}V\oplus S_{(4,4,4)}V$ with weight (5,4,1,2), $f_{e_3^3}=c_3E^4_3f_{e_3^2e_4}$, where $c_3$ is a constant. By \eqref{raipar}, $E^3_4f_{e_3^3}=-f_{e_3^2e_4}$, so
$c_3E^3_4E^4_3f_{e_3^2e_4}=-f_{e_3^2e_4}$, which implies $c_3(E_3^3-E_4^4)f_{e_3^2e_4}=-f_{e_3^2e_4}$,
so $c_3=-1$.
Since  $(f_{e_1e_4^2})_{e_3^3}=(f_{e_3^3})_{e_1e_4^2}$ ,
\begin{eqnarray*}
c_1(h_1)_{e_3^3}&=&(-E^4_3f_{e_3^2e_4})_{e_1e_4^2}\\
&=&-c_2(E^4_3h_2)_{e_1e_4^2}\\
&=&-c_2(E^4_3(h_2)_{e_1e_4^2}-(h_2)_{e_1e_3e_4})\\
&=&c_2(h_2)_{e_1e_3e_4}
\end{eqnarray*}
By Proposition \ref{hwvector5421}, $(h_1)_{e_3^3}$ and ($h_2)_{e_1e_3e_4}$ are both  highest weight vectors  of $S_{(4,4,1)}V\subset S^4(S^3V)$,
by rescaling, we may assume they are equal, so $c_1=c_2$, so $c_1$ and $c_2$ are both nonzero, therefore $f_{e_1e_4^2} $ is a highest weight vector of $S_{(4,4,4)}V\subset S^4(S^3V)$ and
$f_{e_3^2e_4} $ is a highest weight vector of $S_{(5,4,2,1)}V\subset S^4(S^3V)$.
\end{proof}
\section{The case when the degree is 3}\label{degree3}
%For $d=3$,
%\begin{theorem}\label{secant2chow37}
%Let $\dim\ V\leq6$,
%$I_7(\sigma_2(Ch_3(V^*)))=0.$
%\end{theorem}

%Also
%\begin{theorem}\label{secant2chow38}
%Let $\dim\ V\geq6$,
%$S_{(5,5,5,5,3,1)}V\subset I_8(\sigma_2(Ch_3(V^*))).$
%\end{theorem}

%\newpage

%For $d=4$,
%\%begin{theorem}\label{secantrchow4}
%Consider  $\dim\ V\geq4r$,
%$$S_{(6,6,4^{4r-2})}V\subset I_{4r+1}(\sigma_r(Ch_4(V^*)).$$
%\end{theorem}
Consider $\sigma_2(Ch_3(V^*))$, without loss of generality we assume $\dim\ V=6$.
\begin{proposition}\label{Pro3}
\begin{eqnarray*}
I_4(Ch_3(V^*))^{(1)}&=&S_{(7, 2, 2, 2, 2)}V\oplus S_{(6, 4, 2, 2, 1)}V\oplus S_{(5, 5, 3, 1, 1)}V,\\
I_4(Ch_3(V^*))^{(2)}&=&S_{(8, 2, 2, 2, 2, 2)}V\oplus S_{(7, 4, 2, 2, 2, 1)}V\oplus S_{(6, 5, 3, 2, 2, 1)}V\oplus S_{(5, 5, 5, 1, 1, 1)}V,\\
I_4(Ch_3(V^*))^{(3)}&=&0.
\end{eqnarray*}
\end{proposition}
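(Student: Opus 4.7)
The plan is to compute the three prolongations iteratively using the identity $I_4(Ch_3(V^*))^{(p+1)}=(I_4(Ch_3(V^*))^{(p)})^{(1)}$, and at each step combine the Pieri upper bound of Proposition \ref{testyes} with the partial-derivative test of Proposition \ref{testnot}.

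For $I_4(Ch_3(V^*))^{(1)}\subset S^5(S^3V)$: Proposition \ref{testyes} forces every $S_aV\subset I_4(Ch_3(V^*))^{(1)}$ to have $a$ obtained by adjoining a horizontal strip of $3$ boxes to one of $(7,3,2),\,(6,2,2,2),\,(5,4,2,1)$, with $\ell(a)\leq 6$. Pieri's rule produces a short finite candidate list. For each candidate $a$ I record (i) the plethysm multiplicity $m_a$ of $S_aV$ in $S^5(S^3V)$, (ii) the total source multiplicity $N_a=\#(M_a\cap\text{modules of }S^4(S^3V))$, and (iii) the $I_4$-source multiplicity $n_a=\#(M_a\cap\{(7,3,2),(6,2,2,2),(5,4,2,1)\})$. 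Since the polarization map $P_{4,1}$ is injective and $GL(V)$-equivariant, the multiplicity of $S_aV$ in $I_4(Ch_3(V^*))^{(1)}$ lies in $[\max(0,m_a+n_a-N_a),\,\min(m_a,n_a)]$. For the three listed partitions $(7,2^4),\,(6,4,2,2,1),\,(5,5,3,1,1)$ one checks $n_a=N_a$, so Proposition \ref{testyes} puts the full $S_aV$-isotypic (of multiplicity $1$) inside $I_4(Ch_3(V^*))^{(1)}$. Every remaining Pieri candidate is killed by Proposition \ref{testnot}: pick a highest weight vector $f$, compute its lowest-order partial derivative $f_{e^\beta}$, and check that the resulting highest weight vector sits in an irreducible $S_{a-\beta}V\subset S^4(S^3V)$ that is not a summand of $I_4$.

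I iterate the same analysis to obtain $I_4(Ch_3(V^*))^{(2)}$ from $I_4(Ch_3(V^*))^{(1)}$. Pieri applied to the three parents $\{(7,2^4),(6,4,2,2,1),(5,5,3,1,1)\}$ yields a finite candidate list in $S^6(S^3V)$; the dimension match $n_a=N_a$ isolates $(8,2^5),\,(7,4,2^3,1),\,(6,5,3,2,2,1),\,(5,5,5,1,1,1)$, and the partial-derivative test eliminates the rest.

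The vanishing $I_4(Ch_3(V^*))^{(3)}=0$ is the critical step. I enumerate every partition $a$ with $|a|=21$, $\ell(a)\leq 6$ arising by Pieri from the four parents in $I_4(Ch_3(V^*))^{(2)}$. For each candidate one of two things must occur: (i) $S_aV\not\subset S^7(S^3V)$, so the plethysm coefficient is zero and the candidate vanishes trivially; or (ii) $S_aV$ appears in $S^7(S^3V)$, and Proposition \ref{testnot} produces a basis vector $e^\beta$ for which $f_{e^\beta}$ is a highest weight vector of some $S_{a-\beta}V\subset S^6(S^3V)$ with $a-\beta$ not among the four partitions of $I_4(Ch_3(V^*))^{(2)}$. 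The main obstacle is the exhaustive bookkeeping: each of the four parents in $I_4(Ch_3(V^*))^{(2)}$ has several Pieri children with length $\leq 6$, and each branch requires either a vanishing plethysm coefficient in $S^7(S^3V)$ or an explicit partial-derivative witness. Once every branch is eliminated, no $S_aV$ survives and $I_4(Ch_3(V^*))^{(3)}=0$, completing the proposition.
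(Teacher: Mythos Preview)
Your plan is essentially the paper's own argument: combine the Pieri/polarization bound of Proposition~\ref{testyes} with the partial-derivative witness of Proposition~\ref{testnot}, and iterate via $I_4^{(p+1)}=(I_4^{(p)})^{(1)}$. The paper organizes the first step slightly differently---it lists the full plethysm decompositions of $S^4(S^3V)$ and $S^5(S^3V)$ (via LiE), observes that $I_4(Ch_3(V^*))$ consists exactly of the length-$4$ constituents of $S^4(S^3V)$, and concludes that every length-$5$ constituent of $S^5(S^3V)$ automatically satisfies your criterion $n_a=N_a$---but the logic is identical. For $I_4^{(2)}$ and $I_4^{(3)}$ the paper simply writes ``similarly, by studying the modules in $S^6(S^3V)$ and $S^7(S^3V)$,'' which is your iterative bookkeeping.

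One place your sketch is too breezy: for the exclusion step you say ``compute its lowest-order partial derivative $f_{e^\beta}$ and check that the resulting highest weight vector sits in an irreducible $S_{a-\beta}V$ not a summand of $I_4$.'' The lowest derivative may vanish, or may land in a summand that \emph{is} in $I_4$; Proposition~\ref{testnot} only guarantees that \emph{some} $\beta$ works, and when $0<n_a<N_a$ finding the right one can require real computation. The paper flags precisely this issue for $S_{(5,4,4,2)}V$: its Pieri parents in $S^4(S^3V)$ are $S_{(5,4,2,1)}V\subset I_4$ and $S_{(4,4,4)}V\not\subset I_4$, and a priori the polarized image could miss the $(4,4,4)$ factor entirely. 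Proposition~\ref{S5442} is devoted to the explicit verification that $f_{e_1e_4^2}$ is a nonzero highest weight vector of $S_{(4,4,4)}V$, which is what actually excludes $S_{(5,4,4,2)}V$ from $I_4^{(1)}$. Your proposal should acknowledge that such ambiguous cases need this targeted check, not merely the lowest derivative.
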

\begin{proof}
First we claim
\begin{eqnarray}
I_4(Ch_3(V^*))^{(1)}=S_{(7, 2, 2, 2, 2)}V\oplus S_{(6, 4, 2, 2, 1)}V\oplus S_{(5, 5, 3, 1, 1)}V.
\end{eqnarray}
By \eqref{I43}, $$I_4(Ch_3(V^*))=S_{(7,3,2)}V+S_{(6,2,2,2)}V+S_{(5,4,2,1)}V.$$
By computer softwares (e.g. Lie), {\small
\begin{eqnarray*}
S^4(S^3V)&=&S_{(12)}V +S_{(10,2)}V +S_{(9,3)}V +S_{(8,4)}V+\\
& &S_{( 8,2,2)}V+S_{(7,4,1)}V +S_{( 7,3,2)}V +S_{(6,6)}V +\\
& &S_{( 6,4,2)}V +S_{(6,2,2,2)}V +S_{(5,4,2,1)}V +S_{(4,4,4)}V.
\end{eqnarray*}}
and
{\small
\begin{eqnarray*}
S^5(S^3V)&=&S_{(15)}V+S_{(13,2)}V+S_{(12,3)}V+S_{(11,4)}V+S_{(11,2,2)}V+S_{(10,5)}V+S_{(10,4,1)}V+S_{(10,3,2)}V+\\
& &S_{(9,6)}V+2S_{(9,4,2)}V+S_{(9,2,2,2)}V+S_{(8,6,1)}V+S_{(8,5,2)}V+S_{(8,4,3)}V+S_{(8,4,2,1)}V+\\
& &S_{(8,3,3,2)}V+S_{(7,6,2)}V+S_{(7,5,2,1)}V+S_{(7,4,4)}V+S_{(7,4,3,1)}V+S_{(7,4,2,2)}V+\\
& &S_{(7,2,2,2,2)}V+S_{(6,6,3)}V+S_{(6,5,2,2)}V+S_{(6,4,4,1)}V+S_{(6,4,2,2,1)}V+S_{(5,5,3,1,1)}V+S_{(5,4,4,2)}V.
\end{eqnarray*}}
Since $I_4(Ch_3(V^*))$ contains all the modules with length 4 in $S^4(S^3V)$, by Proposition \ref{testyes} any module with length 5 in $S^5(S^3V)$ is in
$I_4(Ch_3(V^*)^{(1)}$.

On the other hand,  the other modules with length no more than 4 in $S^5(S^3V)$ are not in $I_4(Ch_3(V^*)^{(1)}$:
By Proposition \ref{testnot}, for any  module  with length no more than 4 in $S^5(S^3V)$,  one can find a partial derivative of a highest weight vector of this module such that it is a highest weight vector of a module in $S^4(S^3V)$ but not in $I_4(Ch_3(V^*)$. For most modules, we can check directly, but for some modules, we need to verify carefully. For example, By Proposition \ref{S5442}, $S_{(5,4,4,2)}\subset(S_{(5,4,2,1)}\oplus S_{(4,4,4)}V)^{(1)}$, but $f_{e_1e_4^2}$ is a highest weight vector  of $S_{(4,4,4)}V\subsetneq I_4(Ch_3(V^*))$, so $S_{(5,4,4,2)}$ is not
not in $I_4(Ch_3(V^*)^{(1)}$. I conclude
$$I_4(Ch_3(V^*))^{(1)}=S_{(7, 2, 2, 2, 2)}V\oplus S_{(6, 4, 2, 2, 1)}V\oplus S_{(5, 5, 3, 1, 1)}V.$$
Similarly, by studying the modules in $S^6(S^3V)$ and $S^7(S^3V)$, we conclude
\begin{eqnarray*}
I_4(Ch_3(V^*))^{(2)}&=&S_{(8, 2, 2, 2, 2, 2)}V\oplus S_{(7, 4, 2, 2, 2, 1)}V\oplus S_{(6, 5, 3, 2, 2, 1)}V\oplus S_{(5, 5, 5, 1, 1, 1)}V,\\
I_4(Ch_3(V^*)^{(3)}&=&0.
\end{eqnarray*}
\end{proof}
Therefore by Proposition \ref{Pro3} and Theorem \ref{sidman},
\begin{theorem}{\rm (restatement of Theorem \ref{secant2chow37})}
$I_7(\sigma_2(Ch_3(V^*)))=I_4(Ch_3(V^*))^{(3)}=0.$
\end{theorem}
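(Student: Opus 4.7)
The plan is to invoke Sidman and Sullivant's formula (Theorem~\ref{sidman}) with $r=2$ and initial degree $d=4$, which immediately reduces the statement to the representation-theoretic claim
\[
I_4(Ch_3(V^*))^{(3)} = 0.
\]
Example~\ref{I723} records that $I_4(Ch_3(V^*)) = S_{(7,3,2)}V \oplus S_{(6,2,2,2)}V \oplus S_{(5,4,2,1)}V$, so the remaining work is to iterate the prolongation operation three times and verify that the output vanishes.

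First I would compute $I_4(Ch_3(V^*))^{(1)} \subset S^5(S^3V)$ by combining a plethysm decomposition of $S^5(S^3V)$, obtained via a computer algebra system such as LiE, with the two complementary tests of \S\ref{prolongation}: Proposition~\ref{testyes} forces a summand into the prolongation when every Pieri branching of its partition lands in the known ideal component, while Proposition~\ref{testnot} excludes a summand by exhibiting a partial derivative of its highest weight vector that is a highest weight vector of a module outside $I_4(Ch_3(V^*))$. Borderline summands, like $S_{(5,4,4,2)}V$ in Proposition~\ref{S5442}, require writing down a highest weight vector explicitly and tracking its lowering-operator and partial-derivative relations via equation~\eqref{raipar}.

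I would then iterate the same procedure to produce $I_4(Ch_3(V^*))^{(2)} \subset S^6(S^3V)$ and finally $I_4(Ch_3(V^*))^{(3)} \subset S^7(S^3V)$. A guiding observation is that Pieri branching raises partition length by one on the surviving summands, so the summands in the second prolongation all have length $6$. A potential summand $S_\lambda V$ of the third prolongation must therefore come from adding one box to a partition in $I_4(Ch_3(V^*))^{(2)}$; if the result has length $7$ it vanishes thanks to the hypothesis $\dim V \leq 6$, and otherwise a case-by-case application of Proposition~\ref{testnot} shows that some partial derivative of a highest weight vector escapes $I_4(Ch_3(V^*))^{(2)}$.

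The main obstacle is the bookkeeping in the second and third rounds: one must decompose $S^6(S^3V)$ and $S^7(S^3V)$ into $GL(V)$-irreducibles, and for each borderline summand write a highest weight vector and verify the partial derivative behavior against the explicit description of the previous prolongation. Most exclusions are immediate from Pieri's rule, but any summand whose Pieri paths all lead into the prolongation will demand an explicit calculation in the spirit of Proposition~\ref{S5442}. The hypothesis $\dim V \leq 6$ is essential in the final step because it eliminates every partition of length at least $7$ with no further work.
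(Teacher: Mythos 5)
Your proposal follows the paper's own route exactly: reduce via Theorem~\ref{sidman} to $I_4(Ch_3(V^*))^{(3)}=0$, then compute the first, second, and third prolongations degree by degree using plethysm decompositions of $S^k(S^3V)$, Propositions~\ref{testyes} and \ref{testnot}, and explicit borderline verifications such as Proposition~\ref{S5442}; this is precisely what Proposition~\ref{Pro3} in the paper does. One small correction: a candidate summand of $I_4(Ch_3(V^*))^{(p)}$ in $S^{4+p}(S^3V)$ is related to $I_4(Ch_3(V^*))^{(p-1)}\subset S^{3+p}(S^3V)$ through the polarization $S^{4+p}(S^3V)\to S^{3+p}(S^3V)\otimes S^3V$, so Pieri's rule adds a \emph{horizontal strip of $d=3$ boxes}, not a single box; the length of the partition then either stays the same or increases by one, which is why the length-$7$ case (which vanishes when $\dim V\leq6$) and the length-$6$ case (which must be ruled out via Proposition~\ref{testnot}) are the two possibilities in the final round.
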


Also
\begin{theorem}{\rm (restatement of Theorem \ref{secant2chow38})}
$I_8(\sigma_2(Ch_3(V^*)))\supset S_{(5,5,5,5,3,1)}V.$
\end{theorem}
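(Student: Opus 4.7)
The plan is to exhibit a highest weight vector $f\in S_{(5,5,5,5,3,1)}V\subset S^8(S^3V)$ (which exists because $\dim V\geq 6$ equals the length of the partition) and verify that $f$ satisfies the multiprolongation criterion for $\sigma_2(Ch_3(V^*))$. Using the intersection formula derived at the end of \S\ref{prolongation1},
\[
I_8(\sigma_2(Ch_3(V^*)))=I_5(Ch_3(V^*))^{(3)}\cap F_{4,4}^{-1}\bigl[I_4(Ch_3(V^*))\otimes S^4(S^3V)+S^4(S^3V)\otimes I_4(Ch_3(V^*))\bigr],
\]
it suffices to check two separate conditions: (a) $f\in I_5(Ch_3(V^*))^{(3)}$, and (b) $F_{4,4}(f)$ lies in the displayed tensor-product sum.

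For (b) I argue representation-theoretically. Pick a $GL(V)$-equivariant complement $Q$ with $S^4(S^3V)=I_4(Ch_3(V^*))\oplus Q$; by Propositions \ref{inj} and \ref{sur}, $FH_{4,3}$ is surjective and so $Q\cong S^3(S^4V)$ as abstract $GL(V)$-modules. The complement of $I_4(Ch_3(V^*))\otimes S^4(S^3V)+S^4(S^3V)\otimes I_4(Ch_3(V^*))$ inside $S^4(S^3V)^{\otimes 2}$ is exactly $Q\otimes Q$, so (b) follows as soon as the $S_{(5,5,5,5,3,1)}V$-isotypic component of $Q\otimes Q$ is zero. Reading off the summands of $Q$ from the proof of Proposition \ref{Pro3}, they all have length at most $3$; the Littlewood--Richardson condition $(5,5,5,5,3,1)\supseteq a$ (likewise $\supseteq b$) then forces both first parts to be $\leq 5$, leaving only $a=b=(4,4,4)$. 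A direct inspection of the skew shape $(5,5,5,5,3,1)/(4,4,4)$ reveals that column $5$ has height $4$ and so must be filled with four strictly increasing entries from the content alphabet $\{1,2,3\}$, which is impossible. Hence $c_{(4,4,4),(4,4,4)}^{(5,5,5,5,3,1)}=0$ and (b) follows.

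For (a), I need every order-$3$ partial derivative $g=\partial^3 f/(\partial e^{\alpha_1}\partial e^{\alpha_2}\partial e^{\alpha_3})\in S^5(S^3V)$ to lie in $I_5(Ch_3(V^*))=\ker FH_{5,3}$. Set $\alpha=\alpha_1+\alpha_2+\alpha_3$; then $|\alpha|=9$ and $g$ has weight $(5,5,5,5,3,1)-\alpha$. Since the four smallest entries of $(5,5,5,5,3,1)$ sum to $14>9$, this weight has length at least $3$, with equality forced by $\alpha=(0,0,0,5,3,1)$. When the weight of $g$ has length strictly greater than $3$, it cannot appear in any irreducible summand of $S^3(S^5V)$ (all of length $\leq 3$), so $FH_{5,3}(g)=0$ automatically and $g\in I_5(Ch_3(V^*))$. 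The remaining task is to process the five unordered decompositions of $\alpha=(0,0,0,5,3,1)$ into three degree-$3$ monomials in $\{e_4,e_5,e_6\}$, namely $\{e_4^3,e_5^3,e_4^2e_6\}$, $\{e_4^2e_5,e_4e_5^2,e_4^2e_6\}$, $\{e_4^3,e_4e_5^2,e_4e_5e_6\}$, $\{e_4^2e_5,e_4^2e_5,e_4e_5e_6\}$, and $\{e_4^3,e_4^2e_5,e_5^2e_6\}$.

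The main obstacle is to verify directly that each of these five weight-$(5,5,5)$ partials lies in $\ker FH_{5,3}$. This requires first writing down $f$ explicitly by solving the raising-operator equations on $W_{(5,5,5,5,3,1)}\subset S^8(S^3V)$, differentiating three times for each of the five candidate multi-indices, and applying $FH_{5,3}$ to each resulting polynomial in $S^5(S^3V)$. To organize the bookkeeping I would first decompose the weight space $W_{(5,5,5)}\subset S^5(S^3V)$ into isotypic components of the irreducibles $S_\mu V\subset S^5(S^3V)$ whose partitions dominate $(5,5,5)$, identify those summands that sit in $I_5(Ch_3(V^*))$ via the abstract description $I_5(Ch_3(V^*))\cong S^5(S^3V)-S^3(S^5V)$, and then project each of the five partials onto the $I_5$-complement and confirm the projection vanishes. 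Producing an unambiguous $f$ is the technically delicate step: the weight space $W_{(5,5,5,5,3,1)}\subset S^8(S^3V)$ has substantial dimension, and pinning down $f$ up to scalar requires careful handling of the raising operators $E_{j+1}^j$ via the technology of \S\ref{prolongation}.
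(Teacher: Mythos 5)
Your part (b) is essentially the paper's argument and is correct: the complement $Q$ of $I_4(Ch_3(V^*))$ in $S^4(S^3V)$ consists of modules of length at most $3$, among which only $S_{(4,4,4)}V$ has first part $\le 5$, and $c_{(4,4,4),(4,4,4)}^{(5,5,5,5,3,1)}=0$ because column $5$ of the skew shape $(5,5,5,5,3,1)/(4,4,4)$ has height $4$ but only the alphabet $\{1,2,3\}$ is available. Well done there.

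Part (a) contains a genuine error. You assert that when the weight of a third-order partial $g\in S^5(S^3V)$ has more than three nonzero coordinates, $FH_{5,3}(g)=0$ ``automatically'' because all irreducible summands of $S^3(S^5V)$ have length $\le 3$. This confuses the length of a dominant highest weight with the number of nonzero entries of an arbitrary weight. A weight vector inside an irreducible $S_\lambda V$ with $\ell(\lambda)\le 3$ can perfectly well have a weight with up to $\dim V$ nonzero coordinates (already in $S^5V=S_{(5)}V$ the monomial $e_1e_2e_3e_4e_5$ has weight $(1,1,1,1,1,0)$), and indeed for every weight $\beta\in\mathbb{N}^6$ with $|\beta|=15$ the weight space $W_\beta\cap S^3(S^5V)$ is nonzero. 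So the reduction to five weight-$(5,5,5)$ partials is unjustified, and even those five verifications are left undone: you never produce an explicit $f$, and admit the bookkeeping would be ``technically delicate.''

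The paper avoids all of this by running the same equivariance argument you used in part (b), just one stage earlier. By Proposition \ref{propol} and Schur's lemma, $F_{5,3}$ sends the $S_{(5,5,5,5,3,1)}V$-isotypic piece of $S^8(S^3V)$ into the $(5,5,5,5,3,1)$-isotypic piece of $S^5(S^3V)\otimes S^3(S^3V)$; by Littlewood--Richardson this is supported on $S_\mu V\otimes S_\nu V$ with $\mu\subseteq(5,5,5,5,3,1)$, hence $\mu_1\le 5$. The only such $\mu$ occurring in $S^5(S^3V)$ are $(5,5,3,1,1)$ and $(5,4,4,2)$, both of length $\ge 4$ and therefore automatically in $I_5(Ch_3(V^*))=\ker FH_{5,3}$, since $S^3(S^5V)$ only has modules of length $\le 3$. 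Thus $S_{(5,5,5,5,3,1)}V\subset I_5(Ch_3(V^*))^{(3)}$ without ever writing down $f$ or a single partial derivative. Replace your weight-chasing in (a) with this module-level argument.
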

\begin{proof}
 By Example \ref{multipro}, $I_8(\sigma_2(Ch_3(V^*)))=I_{5}(Ch_3(V^*))^{(3)}\bigcap F_{4,4}^{-1}[I_4(Ch_3(V^*))\otimes S^{4}(S^3V)+ S^{4}(S^3V)\otimes I_4(Ch_3(V^*))].$
Since all the modules with 5 columns in $S^5(S^3V)$ are  contained in $I_{5}(Ch_3(V^*))$, by Proposition \ref{propol} and Schur's lemma,
\begin{equation}
S_{(5,5,5,5,3,1)}V\subset I_{5}(Ch_3(V^*)^{(3)}.
\end{equation}
Consider the map
$$F_{4,4}:S^8(S^3V)\rightarrow S^4(S^3V)\otimes S^{4}(S^3V).$$
Let ${I_4(Ch_3(V^*))}^{c}$ denote the complement to ${I_4(Ch_3(V^*))}$ in $S^{4}(S^3V)$.
Since
\begin{eqnarray*}
{I_4(Ch_3(V^*))}^{c}&=& S_{(12)}V + S_{(10,2)}V + S_{(9,3)}V + S_{(8,4)}V+\\
& & S_{( 8,2,2)}V+ S_{(7,4,1)}V  + S_{(6,6)}V + S_{( 6,4,2)}V+ S_{(4,4,4)}V,
\end{eqnarray*}
and $S_{(5,5,5,5,3,1)}V \nsubseteq S_{(4,4,4)}V\otimes S_{(4,4,4)}V$, by the Littlewood-Richardson rule,
$$S_{(5,5,5,5,3,1)}V \nsubseteq {I_4(Ch_3(V^*))}^{c}\otimes{I_4(Ch_3(V^*))}^{c}.$$
Therefore by Schur's lemma $$S_{(5,5,5,5,3,1)}V \subset  F_{4,4}^{-1}(I_4(Ch_3(V^*))\otimes S^{4}(S^3V)+ S^{4}(S^3V)\otimes I_4(Ch_3(V^*))).$$
The result follows.
\end{proof}

\begin{remark}
Since $\sigma_2(Ch_3(\mathbb{C}^{5*}))$ is a proper subset of $ \mathbb{P}S^3(\mathbb{C}^{5*})$, by inheritance (see \cite{MR2865915}), the ideal of $\sigma_2(Ch_3(V^*))$ should contain modules with length 5. So $S_{(5,5,5,5,3,1)}V$ is not enough to cut out $\sigma_2(Ch_3(V^*))$ set-theoretically. One can get length 5 modules with high degree in the ideal of  $\sigma_2(Ch_3(V^*))$ by Koszul Young flattenings \cite{2015arXiv151000886G}, but I still do not know whether they are enough to define $\sigma_2(Ch_3(V^*))$ set-theoretically. We know that $\dim\ S_{(5,5,5,5,3,1)}V=1134$  and ${\rm codim}\ \sigma_2(Ch_3(V^*))=24$, therefore $\sigma_2(Ch_3(V^*))$ is very far from being a complete intersection. Obviously $\mathbb{P}S^3(\mathbb{C}^{5*})$ with dimension 34 is in the zero set of $S_{(5,5,5,5,3,1)}V$, while the dimension of $\sigma_2(Ch_3(V^*))$ is 31,
the next question is: what is the difference between the dimension of $\sigma_2(Ch_3(V^*))$ and the zero set of $S_{(5,5,5,5,3,1)}V$?
\end{remark}
\section{The case when the degree is 4}\label{degree4}
Consider $\sigma_r(Ch_4(V^*))\subset S^4(V^*)$, where dim $V\geq4r$, prolongations enable one to find  modules in the ideal of $\sigma_r(Ch_4(V^*))$.
\begin{theorem}{\rm (restatement of Theorem \ref{secantrchow4})}
When  $\dim\ V\geq4r$, $$I_{4r+1}(\sigma_r(Ch_4(V^*)))=I_5(Ch_4(V^*))^{(4r-4)}$$ and
$$S_{(6,6,4^{4r-2})}V\subset I_{4r+1}(\sigma_r(Ch_4(V^*))).$$
\end{theorem}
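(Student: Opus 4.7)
The plan is to combine Theorem \ref{sidman} with a Littlewood--Richardson analysis of the polarization map. Since $FH_{4,4}$ is an isomorphism by Proposition \ref{inj}, Proposition \ref{idealchow} gives $I_4(Ch_4(V^*)) = 0$, and $I_5(Ch_4(V^*)) \neq 0$ by Corollary \ref{difference}. Applying Theorem \ref{sidman} with $d = 5$ immediately yields the equality $I_{4r+1}(\sigma_r(Ch_4(V^*))) = I_5(Ch_4(V^*))^{((r-1)(5-1))} = I_5(Ch_4(V^*))^{(4r-4)}$.

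For the inclusion $S_{(6,6,4^{4r-2})}V \subset I_5(Ch_4(V^*))^{(4r-4)}$, I would use Proposition \ref{propol}: it suffices to show that the polarization $F_{5,4r-4}\colon S^{4r+1}(S^4V) \to S^5(S^4V) \otimes S^{4r-4}(S^4V)$ sends this module into $I_5(Ch_4(V^*)) \otimes S^{4r-4}(S^4V)$. Since $FH_{5,4}$ is surjective by Propositions \ref{inj} and \ref{sur}, there is a $GL(V)$-module decomposition $S^5(S^4V) = I_5(Ch_4(V^*)) \oplus J$ with $J \cong S^4(S^5V)$. Because $S_{(6,6,4^{4r-2})}V$ is irreducible, Schur's lemma reduces the task to showing that $S_{(6,6,4^{4r-2})}V$ does not occur as a summand of $J \otimes S^{4r-4}(S^4V)$.

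Next I would pin down the candidate summand via Littlewood--Richardson. Recall that any $S_\mu V \subset J \cong S^4(S^5V)$ has $\ell(\mu) \leq 4$ and $|\mu| = 20$ (since in $S^k(S^d V)$ only partitions of length $\leq k$ appear), while any $S_\nu V \subset S^{4r-4}(S^4V)$ has $\ell(\nu) \leq 4r - 4$. The condition $c^{(6,6,4^{4r-2})}_{\mu \nu} > 0$ forces $\mu \subset (6,6,4^{4r-2})$, so $\mu_1, \mu_2 \leq 6$ and $\mu_3, \mu_4 \leq 4$, together with $\ell(\mu) + \ell(\nu) \geq \ell(\lambda) = 4r$, which pushes $\ell(\mu) = 4$; combined with $|\mu| = 20$ this uniquely determines $\mu = (6,6,4,4)$. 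The residual skew shape $(6,6,4^{4r-2})/(6,6,4,4)$ is a $(4r-4) \times 4$ rectangle, and its only Littlewood--Richardson filling places $i$ in every cell of its $i$th row, so $\nu = (4^{4r-4})$.

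The whole argument therefore reduces to the plethysm vanishing $p_{(6,6,4,4)}(4,5) = 0$, i.e., $S_{(6,6,4,4)}V \not\subset S^4(S^5V)$. Once this is confirmed, the unique LR candidate $\mu = (6,6,4,4)$ is absent from $J$, so no copy of $S_{(6,6,4^{4r-2})}V$ appears in $J \otimes S^{4r-4}(S^4V)$, and the polarization image lands entirely in $I_5(Ch_4(V^*)) \otimes S^{4r-4}(S^4V)$, completing the proof. This plethysm vanishing is the main technical obstacle; I would verify it by a direct highest-weight calculation within the weight space $W_{(6,6,4,4)} \subset S^4(S^5V)$ along the lines of Propositions \ref{hwvector732} and \ref{hwvector5421}, or by invoking the plethysm coefficient results of Section \ref{plethysm}.
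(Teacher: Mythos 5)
Your proposal follows essentially the same route as the paper: Theorem \ref{sidman} gives the equality with $I_5(Ch_4(V^*))^{(4r-4)}$, Proposition \ref{propol} reduces this to a polarization map, and $I_5(Ch_4(V^*))^c \cong S^4(S^5V)$. The difference is in how you rule out $S_{(6,6,4^{4r-2})}V$ from $S^4(S^5V)\otimes S^{4r-4}(S^4V)$. You pin down the unique Littlewood--Richardson candidate $\mu=(6,6,4,4)$ (the length and containment constraints you use are correct, and the identification of $\nu=(4^{4r-4})$ is right) and reduce everything to the single plethysm vanishing $p_{(6,6,4,4)}(4,5)=0$. The paper instead uses a stronger fact: $S_{(6,6,6,2)}V$ is the lexicographically smallest constituent of $S^4(S^5V)$ (this is Theorem \ref{Lowweight} with $m=2$, proved in \S\ref{degreeeven}), and since $(6,6,4^{4r-2}) < (6,6,6,2)$ lexicographically, the LR necessary condition $\lambda \geq \mu$ fails for every $\mu$ occurring in $S^4(S^5V)$. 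Both arguments hinge on the same underlying fact that $(6,6,4,4)$ does not occur in $S^4(S^5V)$.

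The one genuine gap is that you leave $p_{(6,6,4,4)}(4,5)=0$ unverified and point to Section \ref{plethysm} for it, but that section (Lemma \ref{duality}, Theorem \ref{plethysmeven}) gives lower bounds and positivity results, not vanishings, so it cannot supply what you need. The vanishing does follow from Theorem \ref{Lowweight} (with $m=2$), from Proposition \ref{keypro} (the tableaux constraint kills both candidate fillings of shape $(6,6,4,4)$ and content $4\times 5$, since columns $1$--$4$ are forced and the entries in the last two columns necessarily place two symbols on the same column set), or from a direct highest-weight computation. Any one of these closes the gap, and then your argument is complete and correct.
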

\begin{proof}
By Proposition \ref{idealchow}, Proposition \ref{inj} and Proposition \ref{sur}, $I_4(Ch_4(V^*))=0$ and
$I_{5}(Ch_4(V^*))= S^{5} (S^4V)-S^{4} (S^{5}V)$, so $I_5(Ch_4(V^*))^c=S^4(S^5V)$.
By  Theorem \ref {sidman}, $$I_{4r+1}(\sigma_r(Ch_4V^*))=I_5(Ch_4V^*)^{(4r-4)}.$$
Consider the polarization map
$$F_{4r-4,5}:S^{4r+1}(S^4V)\rightarrow S^{4r-4}(S^4V)\otimes S^{5}(S^4V),$$
by Proposition \ref{propol},
$$I_5(Ch_4V^*)^{(4r-4)}=F_{4r+1,4}^{-1}(S^{4r-4}(S^4V)\otimes I_5(Ch_4(V^*))).$$
Since $S_{(6,6,6,2)}\subset S^4(S^5V)$ has the lowest highest weight vector with respect to the lexicographic order among all the modules in $S^4(S^5V)$, by the Littlewood-Richardson rule,
$$S_{(6,6,4^{4r-2})}V\subsetneq S^{4r-4}(S^4V)\otimes I_5(Ch_4(V^*))^c=S^{4r-4}(S^4V)\otimes S^4(S^5V).$$
Therefore by  Schur's lemma $$S_{(6,6,4^{4r-2})}V\subset I_5(Ch_4V^*)^{(4r-4)}=I_{4r+1}(\sigma_r(Ch_4(V^*)).$$

\end{proof}
\begin{remark}
Consider $r=2$ and $\dim\ V=8$. Since $\sigma_2(Ch_4\mathbb{C}^{4*}))$ is a proper subset $\mathbb{P}S^4(\mathbb{C}^{4*})$, by inheritance (see \cite{MR2865915}), the ideal of $\sigma_2(Ch_4(V^*))$  contains modules with length 4. So $S_{(6,6,4,4,4,4,4,4)}V$ is not enough to cut out $\sigma_2(Ch_4(V^*))$ set-theoretically. One can get a length 4 module with high degree in the ideal of  $\sigma_2(Ch_4(V^*))$ by Koszul Young flattenings \cite{2015arXiv151000886G}, but I still do not know whether they are enough to define $\sigma_2(Ch_4(V^*))$ set-theoretically. We know that $\dim\ S_{(6,6,4,4,4,4,4,4)}V=336$  and ${\rm codim}\ \sigma_2(Ch_3(V^*))=272$, therefore $\sigma_2(Ch_4(V^*))$ is far from
being a complete intersection. Obviously $\mathbb{P}S^4(\mathbb{C}^{7*})$ with dimension 210 is in the zero set of $S_{(6,6,4,4,4,4,4,4)}V$, while the dimension of $\sigma_2(Ch_4\mathbb{C}^{4*}))$ is 57,
The next question is:  what is the difference between the dimension of $\sigma_2(Ch_4(V^*))$ and the zero set of $S_{(6,6,4,4,4,4,4,4)}V$?
\end{remark}
\section{General case for  even degrees}\label{degreeeven}
Let $\lambda$ be a partition of order $kd$, recall a semi-standard tableau of  shape $\lambda$
and content $k\times d$ is a semi-standard tableau associated to $\lambda$ and filled with $\{1,\cdots,k\}$
such that each $i\in\{1,\cdots,k\}$  appears $d$ times.

\begin{proposition}{\rm \cite{2015arXiv151102927B}}\label{keypro} Let $\lambda$ be a partition with order $kd$ with $d$ odd , then
the multiplicity of $\lambda$ in $S^k(S^dV)$  is less than or equal to the number of semi-standard tableaux of shape $\lambda$
and content $k\times d$ with the additional property : for each pair $(i,j), 1\leq i\neq j\leq k$, the set of columns
of $i$ is not exactly the columns of $j$.
\end{proposition}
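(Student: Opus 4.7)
The plan is to reinterpret the multiplicity $p_\lambda(k,d)$ as the dimension of a space of invariants under a wreath product, exhibit a basis indexed by semi-standard Young tableaux (SSYTs), and then use the oddness of $d$ in a sign calculation to eliminate the bad SSYTs. By Schur--Weyl duality, $V^{\otimes kd} = \bigoplus_\mu S_\mu V \otimes [\mu]$ as a $GL(V)\times \mathfrak{S}_{kd}$-module, where $[\mu]$ denotes the Specht module. The chain of inclusions $S^k(S^dV) \subset (S^dV)^{\otimes k} \subset V^{\otimes kd}$ dualizes to the chain of subgroups $(\mathfrak{S}_d)^{\times k} \subset \mathfrak{S}_k \wr \mathfrak{S}_d \subset \mathfrak{S}_{kd}$: the base $(\mathfrak{S}_d)^{\times k}$ symmetrizes within each of the $k$ blocks of size $d$, and the top $\mathfrak{S}_k$ permutes the blocks. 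Taking invariants identifies both subspaces, so
\[
p_\lambda(k,d) = \dim [\lambda]^{\mathfrak{S}_k \wr \mathfrak{S}_d} = \dim \bigl([\lambda]^{(\mathfrak{S}_d)^{\times k}}\bigr)^{\mathfrak{S}_k}.
\]
Young's rule gives $\dim [\lambda]^{(\mathfrak{S}_d)^{\times k}} = K_{\lambda,(d^k)}$, the number of SSYTs of shape $\lambda$ and content $(d^k)$, already yielding the crude bound $p_\lambda(k,d)\le K_{\lambda,(d^k)}$.

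To sharpen the bound I would produce an explicit basis $\{e_T\}$ of $[\lambda]^{(\mathfrak{S}_d)^{\times k}}$ indexed by such SSYTs: for each $T$, take any standardization $T^\#$ (promote the $d$ copies of label $i$ to the integers $(i-1)d+1,\dots,id$ in the unique order respecting columns), form the classical polytabloid $e_{T^\#}$, and symmetrize over $(\mathfrak{S}_d)^{\times k}$. A transposition $\tau_{ij}\in\mathfrak{S}_k$ sends $e_T$ to the element obtained by exchanging the roles of labels $i$ and $j$ and then restoring semi-standardness by column re-sorting; each column that contains both an $i$ and a $j$ requires one transposition inside the column antisymmetrizer of the polytabloid, contributing a sign $-1$. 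In the \emph{bad} case where labels $i$ and $j$ occupy exactly the same $d$ columns, the restoration brings $T$ back to itself with total sign $(-1)^d$; for $d$ odd this equals $-1$, so $\tau_{ij}\cdot e_T = -e_T$.

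The conclusion then follows from a standard orbit analysis of a signed permutation representation: if any element of the $\mathfrak{S}_k$-orbit of $T$ is bad, some transposition $\tau_{ij}$ lies in the orbit's stabilizer and acts by $-1$, so that orbit contributes $0$ to the invariant subspace; good orbits contribute at most one dimension each, and the number of good orbits is bounded above by the number of good SSYTs. The main technical obstacles are in the middle step: one must verify that the $(\mathfrak{S}_d)^{\times k}$-symmetrized polytabloids really do form a basis of $[\lambda]^{(\mathfrak{S}_d)^{\times k}}$ (linear independence being the nontrivial half), and that the sign of $\tau_{ij}$ on $e_T$ cleanly equals $(-1)^s$ where $s$ is the number of columns shared by labels $i$ and $j$. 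The parity hypothesis $d$ odd enters only at the very end, through $(-1)^d = -1$.
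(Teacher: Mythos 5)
Note first that the paper does not supply its own proof of this proposition: it is cited from \cite{2015arXiv151102927B}, so I am judging your argument on its own merits. Your framework is the right one and almost surely matches the reference: identifying $p_\lambda(k,d)$ with $\dim[\lambda]^{\mathfrak{S}_d\wr\mathfrak{S}_k}$, passing to the SSYT basis of $[\lambda]^{(\mathfrak{S}_d)^{\times k}}$ (this basis is a standard consequence of Young's rule, so the first ``technical obstacle'' you worry about is not a real one), and then exploiting the sign $(-1)^d$. Your central sign computation is also correct: if labels $i$ and $j$ occupy exactly the same $d$ columns of an SSYT $T$, then exchanging the two blocks of size $d$ and re-sorting each shared column (a single transposition per column) returns $T$ exactly, with no Garnir straightening required, so $\tau_{ij}\cdot e_T = (-1)^d e_T$, which is $-e_T$ when $d$ is odd.

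The gap is in the closing orbit analysis. You treat $\{e_T\}$ as a signed permutation basis for $\mathfrak{S}_k$, but it is not one: for a \emph{good} SSYT $T$, swapping labels $i,j$ and re-sorting columns generally produces a column-strict filling that is not semistandard, and straightening it back into the SSYT basis yields a genuine linear combination rather than a single signed basis vector. Consequently there are no well-defined $\mathfrak{S}_k$-orbits of SSYTs, and the statement that ``good orbits contribute at most one dimension each'' does not make sense as written. The bound still holds, but by a shorter argument that sidesteps orbits and triangularity entirely. Let $P=\frac{1}{k!}\sum_{\sigma\in\mathfrak{S}_k}\sigma$ be the Reynolds operator, and for each bad $T$ fix a witness transposition $\tau_T$ with $\tau_T e_T = -e_T$. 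Since $P\tau_T=P$, one has $Pe_T = P\tau_T e_T = -Pe_T$, so $Pe_T=0$ in characteristic zero. Because the $e_T$ span $[\lambda]^{(\mathfrak{S}_d)^{\times k}}$ and $P$ maps this space onto its $\mathfrak{S}_k$-invariants, the invariants are spanned by $\{Pe_T : T\ \text{good}\}$, which immediately gives $p_\lambda(k,d)\leq\#\{\text{good SSYT}\}$. So your key idea and sign computation are right; only the final counting step needed to be replaced.
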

\begin{proposition}\label{plethysmmult}{\rm \cite{MR3349658}} Let $\lambda$ be a partition with order $kd$ and  let $u$ be even, then
$${\rm mult}(S_\lambda V,S^k(S^dV))={\rm mult}(S_{\lambda+(u^k)}V, S^k(S^{d+u}V)).$$
\end{proposition}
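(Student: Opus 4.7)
My plan is to reduce to the case $u = 2$ by induction, and there to construct an explicit $GL(V)$-equivariant isomorphism between the two highest weight spaces via multiplication by a distinguished highest weight vector.

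\emph{Reduction to $u = 2$.} Assume the statement has been proved for $u = 2$. For a general even $u = 2m$, applying the $u = 2$ case $m$ times successively gives
\begin{equation*}
\tmult(S_\lambda V, S^k(S^dV)) = \tmult(S_{\lambda+(2^k)}V, S^k(S^{d+2}V)) = \cdots = \tmult(S_{\lambda+((2m)^k)}V, S^k(S^{d+2m}V)),
\end{equation*}
which is the desired identity for $u = 2m$. Thus it suffices to treat $u = 2$.

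\emph{Case $u = 2$: the map.} By Proposition \ref{chow2}, $S_{(2^k)}V \subset S^k(S^2V)$ with multiplicity one, and its highest weight vector is $\psi := \detm(M)$ with $M_{ij} = e_ie_j$ for $1 \le i,j \le k$. The Cauchy decomposition
\begin{equation*}
S^k(S^dV \otimes S^2V) = \bigoplus_{\mu \vdash k} S_\mu(S^dV) \otimes S_\mu(S^2V)
\end{equation*}
identifies $S^k(S^dV) \otimes S^k(S^2V)$ with the $\mu = (k)$ summand. Composing this inclusion with the $GL(V)$-equivariant map $S^k(S^dV \otimes S^2V) \to S^k(S^{d+2}V)$ induced by the multiplication $S^dV \otimes S^2V \to S^{d+2}V$ in $\Sym(V)$ gives a $GL(V)$-equivariant linear map
\begin{equation*}
m \colon S^k(S^dV) \otimes S^k(S^2V) \longrightarrow S^k(S^{d+2}V),
\end{equation*}
and I set $\Phi(f) := m(f \otimes \psi)$. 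Since $\psi$ has weight $(2^k)$ and is killed by every raising operator, and since raising operators act as derivations on the multiplication, $\Phi$ sends the highest weight space of weight $\lambda$ in $S^k(S^dV)$ into the highest weight space of weight $\lambda + (2^k)$ in $S^k(S^{d+2}V)$. Moreover $\Phi$ is injective, because $\psi \neq 0$ and multiplication by $\psi$ is injective in the ambient symmetric algebra, which is a domain. This already yields the inequality $\tmult(S_\lambda V, S^k(S^dV)) \le \tmult(S_{\lambda+(2^k)}V, S^k(S^{d+2}V))$.

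\emph{Main obstacle: surjectivity of $\Phi$.} The remaining task, which I expect to be the crux of the argument, is to show that every highest weight vector $F \in S^k(S^{d+2}V)$ of weight $\lambda + (2^k)$ lies in the image of $\Phi$, equivalently that $F$ is ``divisible by $\psi$'' in the above sense. My plan is to analyze the lowest partial derivatives of $F$ with respect to the dominance order of Section \ref{prolongation}, using \eqref{raipar} and Proposition \ref{testnot}. The weight $\lambda + (2^k)$ together with the highest weight condition forces the dependence of $F$ on the quadratic monomials $e_ie_j$ (for $1 \le i,j \le k$) to combine antisymmetrically in $i,j$ across the $k$ symmetric slots, and this antisymmetric combination is precisely the content of $\psi = \detm(M)$. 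Once this divisibility is established, the quotient $F/\psi$ is a highest weight vector of weight $\lambda$ in $S^k(S^dV)$ with $\Phi(F/\psi) = F$, completing the proof. Making the divisibility rigorous, by controlling $F$ through its lowest partial derivatives and identifying the resulting vectors with multiples of $\psi$, is the principal technical difficulty.
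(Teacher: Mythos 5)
The paper does not supply its own proof of this proposition; it quotes it from \cite{MR3349658}, so there is no in-paper argument to compare against. Your reduction to $u=2$, your map $\Phi$, and your injectivity claim are all sound --- the cleanest way to make the injectivity rigorous is to observe that $\bigoplus_{d\geq 0} S^k(S^dV)$ sits as a graded subring inside the polynomial ring $\Sym(V\otimes\mathbb{C}^k)\cong\Sym(V)^{\otimes k}$ (namely the subring of $S_k$-invariant elements having equal degree in each $\mathbb{C}^k$-slot), and $\Phi$ is multiplication there by the nonzero element $\psi$ of a domain. This yields $\tmult(S_\lambda V,S^k(S^dV))\leq\tmult(S_{\lambda+(2^k)}V,S^k(S^{d+2}V))$.

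The genuine gap is the surjectivity of $\Phi$ on highest-weight spaces, which you flag as the crux and leave unproved, and I do not believe the partial-derivative route you sketch can close it: divisibility by $\psi$ is a global algebraic condition on a highest weight vector $F\in S^k(S^{d+2}V)$, not one you can read off from a few lowest partial derivatives via Proposition \ref{testnot}. The missing step does become transparent if you push your own setup one step further. In $\Sym(V\otimes\mathbb{C}^k)=\bigoplus_\mu S_\mu V\otimes S_\mu\mathbb{C}^k$, the $GL(V)$-highest-weight space of weight $\mu$ is precisely the multiplicity space $S_\mu\mathbb{C}^k$, and $\psi$ is the square of the generic $k\times k$ minor, hence a $GL(k)$-semi-invariant of character $\det^{2}$. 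Multiplication by $\psi$ on highest-weight vectors is therefore the $GL(k)$-module isomorphism $S_\mu\mathbb{C}^k\to S_{\mu+(2^k)}\mathbb{C}^k$ given by twisting with $\det^2$. Restricting to the piece of $\mathbb{C}^k$-multidegree $(d,\ldots,d)$ and then to $S_k$-invariants --- and this intersection is exactly the $S_\lambda V$-multiplicity space of $S^k(S^dV)$ --- shows that $\Phi$ is a bijection, not merely an injection (for $\dim V\geq k$, which is implicit). This also makes the parity hypothesis visible: $\det^u$ is $S_k$-invariant exactly when $u$ is even, which is when a suitable $\psi\in S^k(S^uV)$ exists at all.
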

\begin{theorem}\label{Lowweight}
 $S_{((2m+2)^{2m-1},2)}V\subset S^{2m}(S^{2m+1}V)$, with multiplicity 1,
and $S_{((2m+2)^{2m-1},2)}V$ is the smallest module with respect to the lexicographic order among all the modules in the decomposition of $S^{2m}(S^{2m+1}V)$.
\end{theorem}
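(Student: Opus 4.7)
The plan is to use Proposition \ref{keypro} to get both the multiplicity upper bound and the lex minimality, and then to construct an explicit highest weight vector for the positivity of the multiplicity.

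Any $S_\mu V$ appearing in $S^{2m}(S^{2m+1}V)$ has $\ell(\mu)\le 2m$, and $|\mu|=2m(2m+1)$ forces $\mu_1\ge 2m+1$, so for $\mu\le\lambda$ lex one has $\mu_1\in\{2m+1,2m+2\}$. In a semi-standard tableau of shape $\mu$ with content $2m\times(2m+1)$, each letter $i$ is absent from exactly $\mu_1-2m-1$ columns, and the distinct-column-set hypothesis of Proposition \ref{keypro} requires distinct letters to determine distinct column sets. When $\mu_1=2m+1$ (forcing $\mu=(2m+1)^{2m}$) no letter misses any column, all column sets coincide, and the multiplicity is $0$. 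When $\mu_1=2m+2$ each letter misses exactly one of the $2m+2$ columns; distinctness makes the assignment letter $\mapsto$ missed column injective, and the length of column $j$ equals $2m-(\text{number of letters missing it})$, so injectivity yields exactly two columns of length $2m$ and $2m$ columns of length $2m-1$, i.e.\ $\mu=\lambda$. For $\mu=\lambda$, row-monotonicity pins the injection to the bijection sending column $3+k$ to letter $2m-k$, giving a unique SSYT. Thus $p_\mu(2m,2m+1)=0$ for every $\mu<\lambda$ lex and $p_\lambda(2m,2m+1)\le 1$.

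For the lower bound I construct an explicit highest weight vector generalising Proposition \ref{chow2}. Set $E:=e_1 e_2\cdots e_{2m-1}\in S^{2m-1}V$, $M_{ij}:=e_ie_j\cdot E\in S^{2m+1}V$, and
\[
F := \det(M_{ij})_{1\le i,j\le 2m}\in S^{2m}(S^{2m+1}V),
\]
where the determinant is computed inside $\Sym(S^{2m+1}V)$ (crucially \emph{not} inside $\Sym V$, where the matrix $(e_ie_j)$ has rank one and the determinant vanishes identically). Each summand $\prod_i M_{i,\sigma(i)}$ has weight $(2,\dots,2)+2m(1,\dots,1,0)=\lambda$, so $F$ is a weight-$\lambda$ vector. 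The symmetric monomial $[e_1^2E][e_2^2E]\cdots[e_{2m}^2E]\in S^{2m}(S^{2m+1}V)$ arises only from $\sigma=\mathrm{id}$ (since $e_ie_{\sigma(i)}=e_k^2$ forces $i=\sigma(i)=k$), so its coefficient in $F$ is $+1$ and $F\ne 0$.

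To finish I must verify that every raising operator $E^j_{j+1}$ kills $F$. Applying the Leibniz rule together with the identity $E^j_{j+1}M_{k\ell}=\delta_{k,j+1}M_{j\ell}+\delta_{\ell,j+1}M_{kj}+e_ke_\ell\cdot E^j_{j+1}E$, the first two contributions assemble into determinants of matrices with two equal rows or two equal columns and vanish; the third also vanishes for $j=2m-1$ because $E$ contains no $e_{2m}$. The main obstacle is the case $1\le j\le 2m-2$, where $E^j_{j+1}E\ne 0$ and the surviving term is $\sum_k\det N^{(k)}$, with $N^{(k)}$ obtained from $M$ by replacing $E$ with $E^j_{j+1}E$ in row $k$. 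I plan to expand each $\det N^{(k)}$ by cofactors along the replaced row, factor out the common $E^j_{j+1}E$, and identify the resulting sum with a scalar multiple of the polynomial identity $\det(e_ie_j)_{i,j=1}^{2m}=0$ in $\Sym V$. The delicate point is bookkeeping the distinction between multiplication in $\Sym(S^{2m+1}V)$ and in $\Sym V$; if this direct check proves too intricate, the fallback is to verify the HWV property by hand for small $m$ and combine with a stability argument via Proposition \ref{plethysmmult} to reach the general case.
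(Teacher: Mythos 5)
Your combinatorial part is correct: the analysis of the letter-to-missed-column map under the distinctness constraint of Proposition~\ref{keypro} cleanly gives both the vanishing of $p_\mu(2m,2m+1)$ for $\mu<\lambda$ and the bound $p_\lambda(2m,2m+1)\le 1$; this is essentially a structural reorganization of the paper's pigeonhole argument (the paper counts column-subsets $\binom{\lambda_1-3}{2m-2}\le 2m-1$ against $2m$ letters). The gap is in the positivity half. The candidate $F=\det(e_ie_jE)_{1\le i,j\le 2m}$ with $E=e_1\cdots e_{2m-1}$ is \emph{not} a highest weight vector once $m\ge 2$, so the construction is wrong, not merely unfinished. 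You correctly show that the first two Leibniz contributions to $E^j_{j+1}F$ assemble into determinants with repeated rows or columns and vanish, and that the third contribution vanishes for $j=2m-1$; but for $1\le j\le 2m-2$ the surviving piece $\sum_k\det N^{(k)}$ is nonzero. Concretely, take $m=2$, $j=1$: $E=e_1e_2e_3$, $E^1_2E=e_1^2e_3$. Since $e_1^4e_3$ contains no $e_2$ it cannot equal any entry $M_{i\sigma(i)}=e_ie_{\sigma(i)}e_1e_2e_3$, so the only contribution to the coefficient of the basis monomial $(e_1^4e_3)\,(e_1e_2^3e_3)\,(e_1e_2e_3^3)\,(e_1e_2e_3e_4^2)\in S^4(S^5V)$ in $E^1_2F$ comes from $\sigma=\mathrm{id}$, $k=1$, where $E^1_2(M_{11})\,M_{22}\,M_{33}\,M_{44}$ equals exactly that monomial; its coefficient is therefore $+1\ne 0$, so $E^1_2F\ne 0$.

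The heuristic of ``factoring out $E^j_{j+1}E$ and identifying with $\det(e_ie_j)=0$'' fails precisely because $E^j_{j+1}E$ is attached to only one slot of the $S^{2m}$-product while the remaining $2m-1$ slots still carry $E$; the products live in $\operatorname{Sym}(S^{2m+1}V)$, where no common factor can be pulled out of an $S^{2m}(S^{2m+1}V)$-valued determinant, and the identity $\det(e_ie_j)=0$ in $\operatorname{Sym}V$ is not available. The proposed fallback does not rescue this either: Proposition~\ref{plethysmmult} keeps $k$ fixed and shifts only the inner degree $d$, so it cannot bootstrap across $m$ when both $k=2m$ and $d=2m+1$ change; and in any case the small case $m=2$ already fails. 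To obtain $p_\lambda(2m,2m+1)\ge 1$, the paper instead applies Proposition~\ref{plethysmmult} with $u=2$ to reduce to $\operatorname{mult}\bigl(S_{((2m)^{2m-1})}V,\,S^{2m}(S^{2m-1}V)\bigr)$ and then invokes formula (80) of Howe \cite{MR983608}, which gives the value $1$.
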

\begin{proof}First, let $\lambda=(\lambda_1,\cdots,\lambda_{2m})$ be a partition with order $4m^2+2m$ and smaller than $((2m+2)^{2m-1},2)$ with respect to the lexicographic order, then
$\lambda_1\leq2m+2$ and $\lambda_{2m}\geq3$.  Consider the semi-standard tableaux with content $2m\times(2m+1)$;  the first 3 columns must be filled
with $\{1,\cdots,2m\}$. Therefore there are $\binom{\lambda_1-3}{2m-2}\leq 2m-1$ possible sets of columns, but there are $2m$ numbers to be filled in the semi-standard tableaux, so by Proposition \ref{keypro}, ${\rm mult}(S_\lambda V,S^{2m}(S^{2m+1}V))=0.$

Second, consider the partition $\lambda=((2m+2)^{2m-1},2)$, by Proposition \ref{plethysmmult},\\ ${\rm mult}(S_\lambda V,S^{2m}(S^{2m+1}V))={\rm mult}(S_{(2m^{2m-1})}V,S^{2m}(S^{2m-1}V))$. By \cite{MR983608} formula (80),\\ ${\rm mult}(S_{(2m^{2m-1})}V,S^{2m}(S^{2m-1}V))=1$.
 The only filling is the following (I take m=3 as an example).
\begin{center}
\begin{tabular}{|c|c|c|c|c|c|}
    \hline
 1&1&1&1&1&2\\
\hline
	2&2&2&2&3&3\\
    \hline
    3&3&3&4&4&4\\
    \hline
    4&4&5&5&5&5\\
    \hline
    5&6&6&6&6&6\\
    \hline
    \end{tabular}
\end{center}
\end{proof}
Let $d=2m\geq4$ and $\dim\ V\geq2mr$, consider the variety  $\sigma_r(Ch_{2m}(V^*))\subset S^{2m}V^*$.
%A partition is an even partition if all the components of the partition are even numbers. When $d$ is even, any even partition with length no more %than $k$ has positive plethysm coefficients in $S^k(S^dV)$ \cite {MR2745569}.
\begin{theorem}{\rm (restatement of Theorem \ref{seccanteven})}
 The isotypic component of \\$ S_{({(2m+2)}^m,{(2m)}^{2mr-m})}V$ is contained in $I_{2m+1}(Ch_{2m}(V^*))^{(2m(r-1))}\subset I_{2mr+1}(\sigma_r(Ch_{2m}(V^*))).$
Moreover any module with even partition and  smaller than $((2m+2)^{2m-1},2)$ (with respect to the lexicographic order) is in $ I_{2mr+1}(\sigma_r(Ch_{2m}(V^*)))$.
\end{theorem}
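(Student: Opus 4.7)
The plan is to combine Corollary~\ref{proisec} with an analysis of the complement of $I_{2m+1}(Ch_{2m}(V^*))$. Applying Corollary~\ref{proisec} with $d = 2m+1$ gives
$$I_{2m+1}(Ch_{2m}(V^*))^{(2m(r-1))} \subset I_{2mr+1}(\sigma_r(Ch_{2m}(V^*))),$$
so it suffices to place both families of modules inside this prolongation. By Proposition~\ref{propol}, the prolongation is the preimage under the polarization map $F_{2m+1,2m(r-1)}$ of $I_{2m+1}(Ch_{2m}(V^*)) \otimes S^{2m(r-1)}(S^{2m}V)$. Hence by Schur's lemma an irreducible $S_\lambda V$ sits in the prolongation exactly when the Littlewood--Richardson coefficient $c^{\lambda}_{\mu\nu}=0$ for every $S_\mu V$ appearing in the complement $I_{2m+1}(Ch_{2m}(V^*))^c$ and every $S_\nu V \subset S^{2m(r-1)}(S^{2m}V)$.

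The key constraint on the admissible $\mu$'s comes from Proposition~\ref{idealchow} and Theorem~\ref{Lowweight}: the Foulkes--Howe map $FH_{2m+1,2m}$ realizes $I_{2m+1}(Ch_{2m}(V^*))^c$ as a $GL(V)$-submodule of $S^{2m}(S^{2m+1}V)$, so every admissible $\mu$ satisfies $\mu \geq ((2m+2)^{2m-1},2)$ in lex order, and iterated Pieri on $(S^{2m+1}V)^{\otimes 2m}$ forces $\mathrm{length}(\mu) \leq 2m$. With these two constraints on $\mu$ in hand, proving the theorem reduces to a Littlewood--Richardson bookkeeping problem.

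For the first claim I would take $\lambda = ((2m+2)^m,(2m)^{2mr-m})$ and use the fact that $c^{\lambda}_{\mu\nu}>0$ requires $\mu \subseteq \lambda$ as Young diagrams, i.e.\ $\mu_i \leq 2m+2$ for $i \leq m$ and $\mu_i \leq 2m$ for $m < i \leq 2m$. Combined with $\mathrm{length}(\mu) \leq 2m$ and $|\mu|=2m(2m+1)$, every inequality must be saturated, which forces $\mu = ((2m+2)^m,(2m)^m)$. Using the standing hypothesis $d=2m\geq 4$, a comparison at position $m+1$ gives $2m < 2m+2$, so this forced $\mu$ is strictly smaller than $((2m+2)^{2m-1},2)$ in lex, contradicting Theorem~\ref{Lowweight}. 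Hence no admissible $\mu$ exists and the whole isotypic component of $S_\lambda V$ lies in the prolongation. For the second claim I would fix any even $\lambda < ((2m+2)^{2m-1},2)$ in lex of the appropriate size and length; the cited plethysm-positivity for even partitions in $S^k(S^{2m}V)$ puts $S_\lambda V$ inside $S^{2mr+1}(S^{2m}V)$, and the chain $\mu \geq ((2m+2)^{2m-1},2) > \lambda$ immediately forces $\mu \not\subseteq \lambda$ and hence $c^{\lambda}_{\mu\nu}=0$.

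The main obstacle is the saturation argument for the first claim: it requires the length bound $\mathrm{length}(\mu)\leq 2m$, the Young-diagram containment from Littlewood--Richardson, and the lex lower bound of Theorem~\ref{Lowweight} to act in concert, and the inequality $m\geq 2$ is genuinely essential---for $m=1$ the forced partition $((2m+2)^m,(2m)^m)$ coincides with $((2m+2)^{2m-1},2)$ and the contradiction evaporates, consistent with the fact that $S_{(4,2)}V$ is not in the ideal of $\sigma_1(Ch_2(V^*))$. The second claim, by contrast, is essentially a formal consequence of Theorem~\ref{Lowweight} once the Schur's-lemma reduction is set up.
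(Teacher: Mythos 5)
Your proof is correct and follows essentially the same route as the paper: reduce via Corollary~\ref{proisec} and Proposition~\ref{propol} to a Littlewood--Richardson statement about the complement $I_{2m+1}(Ch_{2m}(V^*))^c$, then use Theorem~\ref{Lowweight} (via the Foulkes--Howe map) to bound that complement from below in lex order. Where the paper simply invokes ``by the Littlewood--Richardson rule,'' you supply the missing saturation argument---the constraints $\mu\subseteq\lambda$, $\mathrm{length}(\mu)\le 2m$, and $|\mu|=4m^2+2m$ force $\mu=((2m+2)^m,(2m)^m)$, which lies strictly below $((2m+2)^{2m-1},2)$ in lex when $m\ge 2$---and you correctly flag that $m\ge 2$ (i.e.\ $d\ge 4$) is essential, consistent with the $Ch_2$ situation where $S_{(4,2)}V$ is not in the ideal. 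This is a genuinely more explicit account of the same proof rather than a new one.
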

\begin{proof}
By Theorem \ref{Lowweight}, $S_{((2m+2)^{2m-1},2)}V$ is the smallest module (with respect to the lexicographic order) in the decomposition of $S^{2m}(S^{2m+1}V)$.
Therefore by Corollary \ref{difference}, any module smaller than $S_{((2m+2)^{2m-1},2)}V$ (with respect to the lexicographic order) is not in $I_{2m+1}(Ch_{2m}(V^*))^c \subset S^{2m+1}(S^{2m}V)$.\\\\
Consider the polarization map
$$F_{2mr-2m,2m+1}:S^{2mr+1}(S^{2m}V)\rightarrow S^{2mr-2m}(S^{2m}V)\otimes S^{2m+1}(S^{2m}V).$$
By Proposition \ref{propol},
$$I_{2m+1}(Ch_{2m}(V^*))^{(2m(r-1))}=F_{2mr-2m,2m+1}^{-1}(S^{2mr-2m}(S^{2m}V)\otimes I_{2m+1}(Ch_{2m}(V^*))).$$
By the Littlewood-Richardson rule,
$$S_{((2m+2)^m,(2m)^{2mr-m})}V\subsetneq S^{2mr-2m}(S^{2m}V)\otimes I_{2m+1}(Ch_{2m}(V^*))^c.$$
 Moreover any module in $S^{2mr+1}(S^{2m}V)$ with even partition and smaller than $((2m+2)^{2m-1},2)$ is not contained in  $S^{2mr-2m}(S^{2m}V)\otimes I_{2m+1}(Ch_{2m}(V^*)))^c$.

Therefore by Schur's lemma the  isotypic component of
$S_{((2m+2)^m,(2m)^{2mr-m})}V$ is contained in $F_{2mr-2m,2m+1}^{-1}[S^{2mr-2m}(S^{2m}V)\otimes I_{2m+1}(Ch_{2m}(V^*))]=I_{2m+1}(Ch_{2m}(V^*))^{(2m(r-1))}.$\\\\
Moreover any module in $S^{2mr+1}(S^{2m}V)$ with even partition and smaller than $((2m+2)^{2m-1},2)$ (with respect to the lexicographic order) is in $ I_{2m+1}(Ch_{2m}(V^*))^{(2m(r-1))}$.

By Corollary \ref{proisec}, $I_{2m+1}(Ch_{2m}(V^*))^{(2m(r-1))}\subset I_{2mr+1}(\sigma_r(Ch_{2m}(V^*)))$, the results follow.
\end{proof}
%\begin{remark}
%We can compute the dimension of all the modules we obtain in $I_{2mr+1}(\sigma_r(Ch_{2m}(V^*)))$ and compare it to
%the codimension of $\sigma_r(Ch_{2m}(V^*))$.
%\end{remark}
\section{A property about Plethysm}\label{plethysm}
\begin{lemma}\label{duality}{\rm \cite{MR1651092,MR1190119,MR3349658}}
${\rm mult}(S_\lambda V, S^k(S^{2l}V))={\rm mult}(S_{\lambda^T} V, S^k(\Lambda^{2l}V))$, and ${\rm mult}(S_\lambda V,$\\$ S^k(S^{2l+1}V))={\rm mult}(S_{\lambda^T} V, \Lambda^k(\Lambda^{2l}V))$.
\end{lemma}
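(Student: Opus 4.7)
The plan is to translate the statement into the ring $\Lambda$ of symmetric functions and exploit the classical involution $\omega$ sending $h_n\mapsto e_n$, $s_\lambda\mapsto s_{\lambda^T}$. Recalling that for a $GL(V)$-module $W$ the multiplicity of $S_\mu V$ in $W$ equals the coefficient of $s_\mu$ in $\mathrm{ch}\,W$, and that the relevant characters are the plethysms
\[
\mathrm{ch}\,S^k(S^dV)=h_k[h_d],\qquad \mathrm{ch}\,S^k(\Lambda^dV)=h_k[e_d],\qquad \mathrm{ch}\,\Lambda^k(\Lambda^dV)=e_k[e_d],
\]
the two asserted equalities are, after reading coefficients of $s_\mu$ and using $\omega(s_\mu)=s_{\mu^T}$, equivalent to the plethystic identities
\[
\omega(h_k[h_{2l}])=h_k[e_{2l}]\qquad\text{and}\qquad \omega(h_k[h_{2l+1}])=e_k[e_{2l+1}].
\]

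The main step, then, is to establish the general formula
\[
\omega(f[g])=\begin{cases} f[\omega g] & \text{if }|g|\text{ is even},\\ (\omega f)[\omega g] & \text{if }|g|\text{ is odd},\end{cases}
\]
for any homogeneous $g\in\Lambda$ and any $f\in\Lambda$, and then specialise to $f=h_k$, $g=h_d$. To prove the identity I would pass to the power-sum basis: $\omega$ is the ring endomorphism with $\omega(p_n)=(-1)^{n-1}p_n$, and plethysm is multiplicative in $f$ with $p_m[p_\lambda]=p_{m\lambda}$. A direct sign count gives
\[
\omega\bigl(p_m[p_\lambda]\bigr)=(-1)^{m|\lambda|-\ell(\lambda)}\,p_{m\lambda},
\]
which matches $p_m[\omega p_\lambda]=(-1)^{|\lambda|-\ell(\lambda)}p_{m\lambda}$ exactly when $|\lambda|$ is even, and matches $(\omega p_m)[\omega p_\lambda]=(-1)^{(m-1)+|\lambda|-\ell(\lambda)}p_{m\lambda}$ exactly when $|\lambda|$ is odd. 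Additivity of plethysm in the first argument and $\omega$-linearity then propagate the identity from $(p_m,p_\lambda)$ to arbitrary $(f,g)$.

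With the plethystic identity in hand, the lemma is immediate: substituting $g=h_{2l}$ yields the first equality, and $g=h_{2l+1}$ yields the second (with $\Lambda^{k}(\Lambda^{2l+1}V)$ on the right, matching the duality). The hard part is strictly the sign bookkeeping in the power-sum calculation of $\omega$ applied to a plethysm; once that is correctly set up, the translation back to $GL(V)$-multiplicities is automatic via $\omega(s_\lambda)=s_{\lambda^T}$ and the Frobenius correspondence between symmetric functions and characters.
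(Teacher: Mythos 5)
Your proof is correct, and since the paper simply cites \cite{MR1651092,MR1190119,MR3349658} for this lemma without supplying an argument, there is no internal proof to compare against; your symmetric-functions route via Frobenius characteristic and the involution $\omega$ is the standard way these duality statements are established in the literature.

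Two small points worth flagging. First, you have silently (and correctly) repaired a typo in the statement: the right-hand side of the second equality should read $\Lambda^k(\Lambda^{2l+1}V)$, as indeed emerges from $\omega(h_k[h_{2l+1}])=e_k[e_{2l+1}]=\mathrm{ch}\,\Lambda^k(\Lambda^{2l+1}V)$; the printed $\Lambda^k(\Lambda^{2l}V)$ does not even have the right degree. Second, your sign computation is fine but the final propagation step deserves one more sentence: for fixed homogeneous $g$, the assignments $f\mapsto\omega(f[g])$ and $f\mapsto f[\omega g]$ (resp.\ $(\omega f)[\omega g]$) are ring homomorphisms $\Lambda\to\Lambda$, so checking them on the algebra generators $p_m$ suffices --- ``additivity'' alone is not quite the right word, since you also use multiplicativity to pass from $p_m$ to $p_\mu=\prod p_{\mu_i}$; and the reduction in $g$ from $p_\lambda$ to general homogeneous $g$ uses linearity of $p_m[\cdot]$ together with the fact that every $p_\lambda$ appearing in $g$ has the same $|\lambda|=\deg g$, which is exactly where the homogeneity hypothesis enters. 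With those clarifications the argument is airtight.
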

\begin{theorem}\label{plethysmeven}
Let d be even,
if $S_{(a_1,\cdots,a_p)}\subset S^k(S^dV)$ and $S_{(b_1,\cdots,b_q)}\subset S^l(S^dV)$
with $a_p\geq b_1$, then
$$S_{(a_1,\cdots,a_p,b_1,\cdots,b_q)}\subset S^{k+l}(S^dV)$$
as long as $\dim\ V\geq k+l$.
\end{theorem}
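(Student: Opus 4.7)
The plan is to apply Lemma~\ref{duality} at both ends (legal because $d$ is even) and exhibit the desired highest weight vector in $S^{k+l}(\Lambda^dV)$ as an explicit product. By the lemma, the hypotheses translate to $S_{\lambda^T}V\subset S^k(\Lambda^dV)$ and $S_{\mu^T}V\subset S^l(\Lambda^dV)$, where $\lambda=(a_1,\ldots,a_p)$ and $\mu=(b_1,\ldots,b_q)$; the conclusion translates to $S_{\nu^T}V\subset S^{k+l}(\Lambda^dV)$ for the concatenation $\nu=(a_1,\ldots,a_p,b_1,\ldots,b_q)$, which is indeed a partition precisely because $a_p\geq b_1$.

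The first key point is a combinatorial identity: the condition $a_p\geq b_1$ implies $\nu^T=\lambda^T+\mu^T$ coordinate-wise. This follows by directly counting $\nu^T_i=\#\{j:\nu_j\geq i\}$ and splitting the index set into the two blocks coming from $\lambda$ and $\mu$; the hypothesis $a_p\geq b_1$ is exactly what ensures the two counts add with no interference, giving $\nu^T_i=\lambda^T_i+\mu^T_i$ for all $i$.

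Next, pick a highest weight vector $f\in S^k(\Lambda^dV)$ of $S_{\lambda^T}V$ and a highest weight vector $g\in S^l(\Lambda^dV)$ of $S_{\mu^T}V$, and form their product $fg$ inside the symmetric algebra $\Sym(\Lambda^dV)=\bigoplus_m S^m(\Lambda^dV)$. Then $fg\in S^{k+l}(\Lambda^dV)$, and since each raising operator $E^i_j$ acts as a derivation, $E^i_j(fg)=(E^i_jf)g+f(E^i_jg)=0$, so $fg$ is a highest weight vector of weight $\lambda^T+\mu^T=\nu^T$. The nonvanishing $fg\neq 0$ is automatic: $\Sym(\Lambda^dV)$ is a polynomial ring in $\binom{\dim V}{d}$ variables, hence an integral domain. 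Thus $fg$ spans a copy of $S_{\nu^T}V$ inside $S^{k+l}(\Lambda^dV)$, and a second application of Lemma~\ref{duality} gives $S_\nu V\subset S^{k+l}(S^dV)$.

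The assumption $\dim V\geq k+l$ enters only to ensure that $\nu$ has length at most $\dim V$: since any $S_\pi V$ appearing in $S^m(S^dV)\subset(S^dV)^{\otimes m}$ has length at most $m$ (because $S^dV=S_{(d)}V$ is a one-row module, so Littlewood-Richardson forces length $\leq m$), we have $p\leq k$ and $q\leq l$, whence the length $p+q$ of $\nu$ is at most $k+l\leq\dim V$, so $S_\nu V$ is a nonzero Schur module. The only genuinely substantive step is the transpose identity $\nu^T=\lambda^T+\mu^T$; after that, the argument reduces to the two clean facts that products of highest weight vectors in a symmetric algebra remain highest weight vectors, and that the algebra is a domain.
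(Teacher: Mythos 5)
Your proof is correct and follows the same route as the paper: apply Lemma~\ref{duality} twice and produce the required highest weight vector of weight $\lambda^T+\mu^T$ as the product of highest weight vectors inside $\Sym(\Lambda^dV)$. The paper's own proof is a telegraphic one-liner that simply asserts ${\rm mult}(S_{\lambda^T+\mu^T}V,S^{k+l}(\Lambda^dV))>0$; you supply exactly what is left implicit there, namely the derivation/Cartan-product argument, nonvanishing from the integral-domain property of $\Sym(\Lambda^dV)$, and the transpose identity $(\lambda,\mu)^T=\lambda^T+\mu^T$.
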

\begin{proof}
Let $\lambda=(a_1,\cdots,a_p)$ and $\mu=(b_1,\cdots,b_q)$. By Lemma \ref{duality},
${\rm mult}(S_{\lambda^T} V, S^k(\Lambda^{d}V))>0$ and ${\rm mult}(S_{\mu^T} V, S^l(\Lambda^{d}V))>0$,
so ${\rm mult}(S_{\lambda^T+\mu^T} V, S^{k+l}(\Lambda^{d}V))>0$. By Lemma \ref{duality} again, $${\rm mult}(S_{(\lambda, \mu)}V, S^{k+l}(S^{d}V))>0.$$
\end{proof}
\begin{remark}
This is false when $d$ is odd: C.Ikenmeyer gave a counter-example for $d=3$.
There exists $k_0$ such that $S_{6^{k_0}}V\subset S^{2k_0}({S^3V})$ but
$S_{6^{k_0+1}}V\subsetneq S^{2k_0+2}({S^3V})$.
\end{remark}
\section{Appendix}\label{appendix}
\subsection{ ${\mathbf{P}}$ versus ${\mathbf{NP}}$ problem}
Informally speaking, the ${\mathbf{P}}$ versus ${\mathbf{NP}}$ problem (see e.g.\cite{sipser}) asks whether every problem whose solution can be quickly verified by a computer can also be quickly solved by a computer. An early mention of it was a 1956 letter written by Kurt G\"{o}del to John von Neumann. G\"{o}del asked whether a certain problem could be solved in quadratic or linear time \cite{Hartmanis:1989:GNPa}. The precise statement of the P versus NP problem was introduced in 1971 by Stephen Cook in \cite{Cook:1971:CTP:800157.805047} and is considered to be the most important open problem in theoretical computer science \cite{Fortnow:2009:SPV:1562164.1562186}.

In computational complexity theory, a {\it decision problem} is a question in some formal system with a yes-or-no answer, depending on the values of input parameters. The class ${\mathbf{P}}$ consists of all those decision problems that can be solved in an amount of time that is polynomial in the size of the input; the class ${\mathbf{NP}}$ consists of all those decision problems whose positive solutions can be verified in polynomial time given the right information. For example, given a set $A$ of $n$ integers and a subset $B$ of $A$, the statement that \lq\lq$B$ adds up to zero\rq\rq can be quickly verified with at most $(n-1)$ additions. However, there is no known algorithm to
find a subset of A adding up to zero in polynomial time.
\subsection{Valiant's conjecture}
\begin{definition}
An ${\it arithmetic\ circuit}$ $\mathcal{C}$ over $\mathbb{C}$ and the set of variables $\{x_1,...,x_N\}$ is a directed acyclic graph  with vertices
of in-degree 0  and exactly one vertex of out-degree 0. Every vertex in it with in degree zero is called an input gate and is labeled by either a variable $x_i$ or an element in $\mathbb{C}$. Every other gate is labeled by either $+$ or $\times$, exactly one vertex of out-degree 0.
\end{definition}
 A circuit has two complexity measures associated with it: size and depth. The ${\it size}$ of a circuit is the number of gates in it, and the ${\it depth}$ of a circuit is the length of the longest directed path in it.
\begin{proposition}On an arithmetic circuit $\mathbb{C}$, each gate computes
a  polynomial.    The polynomial computed
by the output gate is denoted by $P_C$ and called the
polynomial defined by the circuit.

\end{proposition}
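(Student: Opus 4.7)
The plan is to establish the assignment of a polynomial to each gate by structural induction along a topological ordering of the underlying DAG. Since by definition the arithmetic circuit $\mathcal{C}$ is a finite directed acyclic graph, a topological ordering $v_1,v_2,\ldots,v_s$ of its gates exists: every directed edge goes from some $v_i$ to some $v_j$ with $i<j$. I would then define the polynomial $P_{v_i}\in\mathbb{C}[x_1,\ldots,x_N]$ recursively on $i$, and set $P_\mathcal{C}$ to be the polynomial attached to the unique out-degree $0$ vertex.

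The base case handles the vertices of in-degree $0$: by the definition of the circuit, each such vertex is labeled either by a variable $x_j$ or by a scalar $c\in\mathbb{C}$, and I would set $P_{v_i}$ to be that label, interpreted as an element of $\mathbb{C}[x_1,\ldots,x_N]$. For the inductive step, a vertex $v_i$ of positive in-degree is labeled either $+$ or $\times$, and its predecessors are all among $v_1,\ldots,v_{i-1}$ by the topological property; by the induction hypothesis each predecessor already carries a well-defined polynomial, so I would set $P_{v_i}$ to be the sum (if $v_i$ is labeled $+$) or the product (if $v_i$ is labeled $\times$) of the polynomials assigned to the predecessors. This is well-defined because $\mathbb{C}[x_1,\ldots,x_N]$ is a commutative ring and hence closed under finite sums and products. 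Since $\mathcal{C}$ has exactly one vertex $v_{i_0}$ of out-degree $0$, I would define $P_\mathcal{C}:=P_{v_{i_0}}$.

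There is no genuine obstacle here: the proposition is essentially a structural consequence of the acyclicity of $\mathcal{C}$ together with the ring-theoretic closure of $\mathbb{C}[x_1,\ldots,x_N]$ under addition and multiplication. The one small point worth remarking on is independence of $P_\mathcal{C}$ from the chosen topological ordering. This follows because the polynomial assigned at any gate depends only on the polynomials at its immediate predecessors and on its label, data that are intrinsic to the DAG structure rather than to the particular ordering, so a straightforward second induction (on the set of gates processed so far, in any chosen order) shows that two topological orderings produce identical assignments.
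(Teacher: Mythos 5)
Your proof is correct, and it is the standard structural-induction argument over a topological ordering of the DAG. The paper itself states this proposition without proof (it is background material in the appendix), so there is nothing to compare it against; your argument, including the remark that the assignment is independent of the chosen topological ordering because each gate's polynomial depends only on its label and its predecessors' polynomials, is exactly the reasoning the paper leaves implicit.
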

\begin{definition}
The class ${\mathbf{VP}}$ consists of sequences of polynomials $(p_n)$ of polynomial of degree $d(n)$ and variables $v(n)$, where $d(n)$ and $v(n)$ are bounded by polynomials in $n$ and such that
there exists a sequence of arithmetic circuits $\mathcal{C}_n$ of polynomially bounded size such that
$\mathcal{C}_n$ defines $p_n$.
\end{definition}
\begin{example}
The sequence $({\det}_n)\in {\mathbf{VP}}$, where $det_n$ denotes the determinant of a $n\times n$ matrix.
 \end{example}
\begin{definition}
Consider a sequence $h=(h_n)$ of polynomials in variables $x_1,\cdots,x_n$ of the form $$h_n=\sum_{e\in\{0,1\}^n}g_n(e)x_1^{e_1}\cdots x_n^{e_n},$$
where $(g_n)\in {\mathbf{VP}}$.
The class $\mathbf{VNP}$ is defined to be the set of all sequences the form $h$.
\end{definition}
\begin{definition}
A problem P is hard for a complexity class $\mathbf{C}$ if all problems in  $\mathbf{C}$ can
be reduced to P (i.e. there is an algorithm to translate any instance of a problem in  $\mathbf{C}$
to an instance of P with comparable input size). A problem P is complete for  $\mathbf{C}$ if it is
hard for  $\mathbf{C}$ and P $\in\mathbf{C}$.
\end{definition}
\begin{proposition}{\rm \cite{MR526203}}
 The sequence $({\perm}_n)$ is $\mathbf{VNP}$-complete.
 \end{proposition}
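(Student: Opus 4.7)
The plan is to prove the two halves of completeness independently: (i) $(\perm_n) \in \mathbf{VNP}$, and (ii) every sequence $(h_n) \in \mathbf{VNP}$ reduces to $(\perm_n)$. The second half is the substantial content and follows the classical cycle-cover / graph-gadget strategy due to Valiant.

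First I would verify membership. The polynomial $\perm_n = \sum_{\sigma \in \mathfrak{S}_n} \prod_i x_{i\sigma(i)}$ is multilinear in the $N = n^2$ variables $\{x_{ij}\}$, and the coefficient of a multilinear monomial $\prod_{(i,j)\in S} x_{ij}$ equals $1$ when the $0/1$ indicator $e \in \{0,1\}^{n^2}$ of $S$ encodes a permutation matrix, and $0$ otherwise. After reindexing the sequence by $N = n^2$ (padding by zero at non-squares), I would take
$$g_N(e) \;=\; \prod_{i=1}^n f\!\left(\sum_{j=1}^n e_{ij}\right) \cdot \prod_{j=1}^n f\!\left(\sum_{i=1}^n e_{ij}\right),$$
where $f(t) = \prod_{k=0,\, k\neq 1}^{n} (t-k)/(1-k)$ is the degree-$n$ univariate polynomial with $f(1)=1$ and $f(k)=0$ for $k\in\{0,2,\dots,n\}$. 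Each copy of $f$ has an arithmetic circuit of size $O(n)$, so $g_N$ admits a circuit of size $O(n^3)$, placing $(g_N) \in \mathbf{VP}$ and hence $(\perm_n) \in \mathbf{VNP}$.

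For $\mathbf{VNP}$-hardness, let $(h_n) \in \mathbf{VNP}$ with $h_n = \sum_{e\in\{0,1\}^n} g_n(e)\, x_1^{e_1}\cdots x_n^{e_n}$ and $(g_n) \in \mathbf{VP}$ computed by a circuit $\mathcal{C}_n$ of size $s(n)$. I would construct an explicit matrix $M_n$ with entries in $\mathbb{C}[x_1,\dots,x_n]$, of size polynomial in $s(n)$ and $n$, satisfying $\perm(M_n) = h_n$. The key tool is the classical identity
$$\perm(M) \;=\; \sum_{C} \prod_{(i,j)\in C} M_{ij},$$
where the sum runs over cycle covers $C$ of the complete loop-directed graph on the index set of $M$, with edge $(i,j)$ weighted by $M_{ij}$. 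I would then attach a gadget to each gate of $\mathcal{C}_n$, namely a small directed subgraph whose local cycle-cover weight realizes $+$ or $\times$ of the values carried by its inputs, wire these gadgets together along the circuit DAG, append for each Boolean variable $e_i$ a gadget offering two alternative local cycle covers (realizing the external sum $\sum_{e_i\in\{0,1\}}$), and finally attach the variables $x_j$ as edge weights so that choosing $e_j = 1$ inserts $x_j$ into the local product. Summing cycle-cover contributions over all global choices then reproduces $\sum_e g_n(e) \prod_j x_j^{e_j} = h_n$.

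The main obstacle is the correctness of the gadgets: one must verify that \emph{every} cycle cover of the assembled graph decomposes uniquely into compatible local cycle covers, one per gadget, with no spurious covers crossing between gadgets, and that each local contribution truly realizes the intended arithmetic operation of its gate. Valiant's original construction achieves this by designing gadgets whose only possible inter-gadget connections are a small designated set of entry/exit edges constrained so that a unique compatible global structure is forced. Once the gadgets are fixed, the size bound $|M_n| = \operatorname{poly}(s(n), n)$ and the identity $\perm(M_n) = h_n$ follow, which together with the membership step establishes $\mathbf{VNP}$-completeness of $(\perm_n)$.
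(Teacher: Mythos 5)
The paper does not prove this proposition; it simply cites Valiant's 1979 paper \cite{MR526203} for it. So there is no in-paper argument to compare against, and your proposal has to be judged on its own. Your two-part strategy --- membership via a small circuit computing the permutation-matrix indicator, and hardness via the cycle-cover identity together with gate gadgets --- is indeed the classical Valiant route, and the membership half is essentially correct: your $g_N$ is a product of $2n$ univariate interpolating polynomials in the row and column sums, has degree $O(n^2)$ and circuit size $\operatorname{poly}(n)$, and equals the permutation-matrix indicator on $\{0,1\}^{n^2}$, which is all that is needed.

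The hardness half, as written, has a genuine gap. You propose to attach one gadget per gate of an arbitrary \emph{circuit} $\mathcal{C}_n$ for $g_n$ and let cycle covers of the resulting graph simulate the evaluation. But the cycle-cover gadget calculus works for arithmetic \emph{formulas}, not circuits: if a gate $\gamma$ has fan-out $k>1$, a cycle cover traverses the $k$ outgoing connections independently, so the local contribution effectively raises $\gamma$'s value to the $k$-th power rather than reusing it $k$ times. Valiant's construction avoids this by reducing from formulas, and the missing bridge is the separate lemma that $\mathbf{VNP}$ is unchanged if one defines it with formulas in place of circuits (often written $\mathbf{VNP}=\mathbf{VNP}_e$); that lemma is nontrivial and is not supplied or cited in your sketch. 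You would either need to invoke it explicitly, or restrict at the outset to formula-computable $g_n$ and argue that this loses no generality. A second, smaller imprecision: the entries of $M_n$ should lie in $\{x_1,\dots,x_n\}\cup\mathbb{C}$ (so that $h_n$ is a $p$-projection of $\perm_{|M_n|}$), not in $\mathbb{C}[x_1,\dots,x_n]$ as you wrote; allowing arbitrary polynomial entries would trivialize the reduction.
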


Therefore to prove Valiant's Conjecture $\mathbf{VP\neq VNP}$ \cite{vali:79-3}, we only need to prove there does not exist a polynomial size
circuit computing the permanent.
\bibliographystyle{amsplain}
%\bibliography{thesis}
%\begin{thebibliography}{59}
%\bibitem{}J.M.Landsberg, \textsl{Tensors: Geometry and Applications}, Graduate Studies in Mathematics,vol.128, AMS, Providence, 2011.
%\bibitem{}J.M.Landsberg, \textsl{Geometric Complexity Theory: An introduction for Geometers}, arXiv1305.7387L, 2013.
%\bibitem{}Ankit Gupta, Pritish Kamath, Neeraj Kayal, Ramprasad Saptharishi, \textsl{Approaching the chasm at depth four}, Electronic Colloquim
%on Computational Complexity, Revision 3 of Report No.98(2012).
%\bibitem{}P. Gordon, \textsl{Das Zerfallen der Curven in gerade Linien}, Math Ann(1894),no. 45, 410-427
%\bibitem{}1. J. Hadamard, \textsl{M\'emoire sur l\'elimination}, Acta Math. 220(1897), no. 1, 201¨C238. MR 1554881
%\end{thebibliography}
\bibliography{Lmatrix}
%{MR0453768, MR1054015, MR1777172}
\end{document}